\newtheorem{lemma}[subsection]{Lemma}
\newtheorem{proposition}[subsection]{Proposition}
\newtheorem{theorem}[subsection]{Theorem}
\newtheorem{corollary}[subsection]{Corollary}
\theoremstyle{definition}
\newtheorem{pg}[subsection]{}
\newtheorem{definition}[subsection]{Definition}
\newtheorem{notation}[subsection]{Notation}
\newtheorem{remark}[subsection]{Remark}
\numberwithin{equation}{subsection}
\DeclareMathOperator{\ad}{ad}
\DeclareMathOperator{\alg}{alg}
\DeclareMathOperator{\APerf}{APerf}
\DeclareMathOperator{\CAlg}{CAlg}
\DeclareMathOperator{\cn}{cn}
\DeclareMathOperator{\colim}{colim}
\DeclareMathOperator{\Cpl}{Cpl}
\DeclareMathOperator{\DerDM}{DerDM}
\DeclareMathOperator{\fDerDM}{fDerDM}
\DeclareMathOperator{\fSpDM}{fSpDM}
\DeclareMathOperator{\Fun}{Fun}
\DeclareMathOperator{\id}{id}
\DeclareMathOperator{\iTop}{\infty\mathcal{T}op}
\DeclareMathOperator{\LMod}{LMod}
\DeclareMathOperator{\LSym}{LSym}
\DeclareMathOperator{\Map}{Map}
\DeclareMathOperator{\Mod}{Mod}
\DeclareMathOperator{\op}{op}
\DeclareMathOperator{\Poly}{Poly}
\DeclareMathOperator{\QCoh}{QCoh}
\DeclareMathOperator{\RPres}{\mathcal{P}r^{R}}
\DeclareMathOperator{\sHen}{sHen}
\DeclareMathOperator{\Shv}{Shv}
\DeclareMathOperator{\Sp}{Sp} 
\DeclareMathOperator{\SpDM}{SpDM}
\DeclareMathOperator{\Spec}{Spec}
\DeclareMathOperator{\Spf}{Spf}
\DeclareMathOperator{\SSet}{\mathcal{S}}
\DeclareMathOperator{\Sym}{Sym} 
\DeclareMathOperator{\bbE}{\mathbb{E}}
\DeclareMathOperator{\calC}{\mathcal{C}}
\DeclareMathOperator{\calD}{\mathcal{D}}
\DeclareMathOperator{\calO}{\mathcal{O}}
\DeclareMathOperator{\calP}{\mathcal{P}} 
\DeclareMathOperator{\calX}{\mathcal{X}}
\DeclareMathOperator{\calY}{\mathcal{Y}}
\DeclareMathOperator{\frakX}{\mathfrak{X}}
\DeclareMathOperator{\sfX}{\mathsf{X}}
\DeclareMathOperator{\sfY}{\mathsf{Y}} 
\DeclareMathOperator{\sfZ}{\mathsf{Z}}
\newcommand{\et}{\mathrm{\acute et}}
\newcommand{\Adjoint}[4]{\xymatrix@1{#1:#2 \ar@<.4ex>[r] & #3:#4 \ar@<.4ex>[l]}}
\newcommand{\adjoint}[2]{\xymatrix@1{#1 \ar@<.4ex>[r] & #2 \ar@<.4ex>[l]}}
\newcommand{\Pull}[8]{\xymatrix{#1\ar[r]^-{#5}\ar[d]^-{#6} & #2 \ar[d]^-{#7} \\ #3 \ar[r]^-{#8} & #4}}
\title{Formal Derived Algebraic Geometry}
\author{Chang-Yeon Chough}
\address{Department of Mathematics, Sogang University, Seoul 04107, Republic of Korea}
\address{Center for Nano Materials, Sogang University, Seoul 04107, Republic of Korea}
\email{chough@sogang.ac.kr}
\keywords{derived and spectral Deligne-Mumford stacks, formal derived and spectral Deligne-Mumford stacks, formal GAGA}
\subjclass[2020]{14A30, 55P43, 18N60}
\begin{document}

\begin{abstract}
	We develop some foundations for the theory of formal derived algebraic geometry, which parallel the theory of formal spectral algebraic geometry by Jacob Lurie. For this, we establish a close connection between algebro-geometric objects in the derived and spectral settings. We apply this construction to prove a version of the formal GAGA theorem in the derived setting. 
\end{abstract}

\setcounter{tocdepth}{1} 
\maketitle
\tableofcontents

\section{Introduction}

\begin{pg}
	The basic building blocks of ordinary algebraic geometry are commutative rings. Replacing commutative rings by simplicial commutative rings (that is, commutative ring objects in the category of simplicial sets) and connective $\bbE_\infty$-rings (that is, commutative algebra objects in the symmetric monoidal $\infty$-category of connective spectra; see \cite[p.1201]{lurie2017ha}), the theories of derived and spectral algebraic geometry have been developed, respectively. More informally, we can regard a simplicial commutative ring and a connective $\bbE_\infty$-ring as a topological space equipped with an addition and multiplication which satisfy the axioms of commutative rings on the nose and up to coherent homotopy, respectively. When working over an ordinary commutative ring $R$ containing the field $\mathbb{Q}$ of rational numbers, the forgetful functor $\Theta_R: \CAlg^\Delta_R \rightarrow \CAlg_R^{\cn}$ from the $\infty$-category of simplicial commutative $R$-algebras to the $\infty$-category of connective $\bbE_\infty$-algebras over $R$ is an equivalence; see \cite[25.1.2.2]{lurie2018sag}. However, the functor $\Theta_R$ is generally neither fully faithful nor essentially surjective. 
\end{pg}

\begin{pg}
	The main goal of this paper is to establish a relationship between (formal) derived Deligne-Mumford stacks and (formal) spectral Deligne-Mumford stacks. To articulate this more precisely, let us fix a simplicial commutative ring $R$ and let $R^\circ$ denote the underlying $\bbE_\infty$-ring of $R$ (see \ref{the forgetful functor}). With an eye toward Deligne-Mumford stacks in the derived and spectral settings, we note that the forgetful functor $\Theta_R: \CAlg^\Delta_R \rightarrow \CAlg^{\cn}_{R^\circ}$ of \ref{the forgetful functor for a simplicial commutative ring} from the $\infty$-category of simplicial commutative $R$-algebras of \ref{the infinity-category of simplicial commutative rings} to the $\infty$-category of connective $\bbE_\infty$-algebras over $R^\circ$ of \cite[7.1.3.8]{lurie2017ha} determines a functor $\Xi: \iTop_{\CAlg^\Delta_R}^{\sHen} \rightarrow \iTop_{\CAlg^{\cn}_{R^\circ}}^{\sHen}$, where $\iTop_{\CAlg^\Delta_R}^{\sHen}$ denotes the $\infty$-category whose objects are pairs $(\calX, \calO_{\calX})$, where $\calX$ is an $\infty$-topos and $\calO_{\calX}$ is a strictly Henselian $\CAlg^\Delta_R$-valued sheaf on $\calX$, and whose morphism from $(\calX, \calO_{\calX})$ to $(\calY, \calO_{\calY})$ is a pair $(f_\ast, \alpha)$, where $f_\ast: \calX \rightarrow \calY$ is a geometric morphism of $\infty$-topoi and $\alpha: \calO_{\calY} \rightarrow \calO_{\calX} \circ f^\ast$ is a local morphism of $\CAlg^\Delta_R$-valued sheaves on $\calX$, and $\iTop_{\CAlg^{\cn}_{R^\circ}}^{\sHen}$ is defined similarly (see \ref{strictly Henselian objects and local morphisms in the derived setting}, and \ref{strictly Henselian objects and local morphisms in the spectral setting}). 
\end{pg}

\begin{remark}
	The adjoint functors $(\Xi, \Xi^R)$ of \ref{restriction to the subcategory of strictly Henselian objects and local morphisms} restrict to a pair of adjoint functors $\adjoint{\DerDM_R}{\SpDM_{R^\circ}}$ between the $\infty$-category $\DerDM_R$ of derived Deligne-Mumford stacks over $R$ and the $\infty$-category $\SpDM_{R^\circ}$ of spectral Deligne-Mumford stacks over $R^\circ$ of \ref{the infinity-categories of derived and spectral Deligne-Mumford stacks}; see \ref{the adjunction for derived and spectral Deligne-Mumford stacks}. For a simplicial commutative $R$-algebra $A$ and a connective $\bbE_\infty$-algebra $B$ over $R^\circ$, we have canonical equivalences $\Xi(\Spec A) \simeq \Spec \Theta(A)$ and $\Xi(\Spec B) \simeq \Spec (\Theta^L(B))$, where $\Theta^L$ denotes a left adjoint to the functor $\Theta_R: \CAlg^\Delta_R \rightarrow \CAlg^{\cn}_{R^\circ}$ of \ref{the forgetful functor for a simplicial commutative ring}; see \ref{the forgetful functor is monadic and comonadic} and \ref{preservation of affine objects}. 
\end{remark}

\begin{remark}
	The functor $\Xi: \iTop_{\CAlg^\Delta_R}^{\sHen} \rightarrow \iTop_{\CAlg^{\cn}_{R^\circ}}^{\sHen}$ restricts to a functor $\fDerDM_R \rightarrow \fSpDM_{R^\circ}$ between the $\infty$-category $\DerDM_R$ of formal derived Deligne-Mumford stacks over $R$ and the $\infty$-category $\SpDM_{R^\circ}$ of formal spectral Deligne-Mumford stacks over $R^\circ$ of \ref{the infinity-categories of formal derived and spectral Deligne-Mumford stacks}; see \ref{the functor for formal derived and spectral Deligne-Mumford stacks}. If $A$ is an adic simplicial commutative $R$-algebra with a finitely generated ideal of definition $I \subseteq \pi_0A$, then we have a canonical equivalence $\Xi(\Spf A) \rightarrow \Spf \Theta(A)$, where we regard $\Theta(A)$ as an adic $\bbE_\infty$-algebra over $R^\circ$ with the same ideal of definition as $A$; see \ref{adic derived rings} and \ref{the underlying forma spectra}.
\end{remark}

\begin{pg}
	As an application of our construction, we prove a version of the formal GAGA theorem in derived algebraic geometry. In \cite{lurie2018sag}, Lurie set up the theory of formal spectral algebraic geometry and proved many principal results such as the Grothendieck existence theorem and the formal GAGA theorem. Most of the definitions of the theory can be adapted to the setting of derived algebraic geometry without essential change. However, some results cannot be translated to the derived setting: for example, one of the main ingredients in the proof of the formal GAGA principle in the spectral setting of \cite[8.5.3.1]{lurie2018sag} is Tannaka duality of \cite[9.6.4.2]{lurie2018sag}. Nevertheless, we use the compatibility of formal completions in the derived and spectral settings of \ref{comparison for completions} to establish the following version of the formal GAGA theorem in the setting of derived algebraic geometry:
\end{pg}

\begin{theorem}\label{formal GAGA in DAG}
	Let $R$ be a complete adic simplicial commutative ring with a finitely generated ideal of definition $I \subseteq \pi_0R$ (see \emph{\ref{adic simplicial commutative rings}}), and suppose that $R$ is noetherian. Let $\sfX$ be a derived algebraic space which is proper and locally almost of finite presentation over $R$. Let $\sfY$ be a quasi-separated derived algebraic space. Then the restriction map 
$$
\theta_{\sfX}: \Map_{\DerDM}(\sfX, \sfY) \rightarrow \Map_{\fDerDM}(\sfX^\wedge_I, \sfY),
$$ 
given by precomposition with the canonical map $\sfX^\wedge_I \rightarrow \sfX$, is a homotopy equivalence. Here $\sfX^\wedge_I$ denotes the formal completion of $\sfX$ along the vanishing locus of $I$ (see \emph{\ref{formal completion in DAG}}). 
\end{theorem}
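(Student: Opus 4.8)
The plan is to deduce the statement from the formal GAGA theorem in spectral algebraic geometry, \cite[8.5.3.1]{lurie2018sag}, by resolving the target $\sfY$ into pieces coming from the spectral world. Write $\theta_\sfX(\sfZ)\colon \Map_{\DerDM}(\sfX,\sfZ)\to\Map_{\fDerDM}(\sfX^\wedge_I,\sfZ)$ for the restriction map with a variable target $\sfZ$. Both functors $\sfZ\mapsto\Map_{\DerDM}(\sfX,\sfZ)$ and $\sfZ\mapsto\Map_{\fDerDM}(\sfX^\wedge_I,\sfZ)$ send limits of targets to limits of spaces, so the class of targets $\sfZ$ for which $\theta_\sfX(\sfZ)$ is a homotopy equivalence is closed under limits. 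The key point is then to produce enough targets in this class and to exhibit $\sfY$ as a limit of them.

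First I would treat the ``cofree'' targets, namely those of the form $\sfZ=\Xi^R(Z)$ for $Z$ a quasi-separated spectral algebraic space. Using the adjunction $(\Xi,\Xi^R)$ together with the comparison for completions $\Xi(\sfX^\wedge_I)\simeq(\Xi\sfX)^\wedge_I$, one identifies $\theta_\sfX(\Xi^R Z)$ with the spectral restriction map $\Map_{\SpDM}(\Xi\sfX,Z)\to\Map_{\fSpDM}((\Xi\sfX)^\wedge_I,Z)$, compatibly with the precomposition by the completion maps. Since $R^\circ$ has the same homotopy groups as $R$, it is again complete, adic and noetherian with ideal of definition $I$, and $\Xi\sfX$ is a spectral algebraic space proper and locally almost of finite presentation over $R^\circ$; hence \cite[8.5.3.1]{lurie2018sag} applies and shows that this spectral map is a homotopy equivalence. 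Thus every $\Xi^R(Z)$ lies in the class above.

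To finish, I would express the given $\sfY$ as a limit of cofree targets. Because the forgetful functor is monadic and comonadic, $\Xi$ is comonadic, so the unit $\sfY\to\Xi^R\Xi\sfY$ extends to a cobar resolution exhibiting $\sfY\simeq\lim_{[n]\in\Delta}(\Xi^R\Xi)^{n+1}\sfY$; each term is of the form $\Xi^R(Z_n)$ with $Z_n=(\Xi\Xi^R)^{n}\Xi\sfY$ a quasi-separated spectral algebraic space. Combining the three observations, $\theta_\sfX(\sfY)\simeq\lim_{[n]}\theta_\sfX(\Xi^R Z_n)$ is a limit of homotopy equivalences, hence a homotopy equivalence.

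The step I expect to be the main obstacle is the last one: upgrading the comonadicity of the forgetful functor on rings to the geometric statement that $\sfY$ is the totalization of the cobar resolution by the cofree objects $\Xi^R(Z_n)$, and checking that each $Z_n$ is again a quasi-separated spectral algebraic space, so that the spectral theorem may be invoked termwise. This rests on the preservation of quasi-separated algebraic spaces under both $\Xi$ and $\Xi^R$, and on the preservation of properness and local almost finite presentation under $\Xi$ needed to guarantee that $\Xi\sfX$ satisfies the hypotheses of \cite[8.5.3.1]{lurie2018sag}.
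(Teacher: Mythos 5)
Your strategy is genuinely different from the paper's, and its first two steps are sound: identifying $\theta_{\sfX}(\Xi^R Z)$ with the spectral restriction map via the adjunction of \ref{restriction to the subcategory of strictly Henselian objects and local morphisms} and the completion comparison \ref{comparison for completions}, and verifying the hypotheses of \cite[8.5.3.1]{lurie2018sag} for $\Xi\sfX$ via \ref{preservation of properties} and \ref{complete derived rings are compatible}, is exactly the content of \ref{a relative version of the formal GAGA}. The genuine gap is your third step. The inference ``$\Theta$ is comonadic, hence $\Xi$ is comonadic'' is not valid: comonadicity of the ring-level forgetful functor does not formally transfer to the functor $\Xi$ on ringed $\infty$-topoi, since $\Spec$ is contravariant and a morphism of ringed $\infty$-topoi mixes a geometric morphism with a structure-sheaf map in the opposite direction. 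Worse, it is not even clear how to formulate comonadicity of $\Xi\colon \DerDM \rightarrow \SpDM_{\mathbb{Z}}$ in the Barr--Beck sense, because $\DerDM$ does not admit the required totalizations; the cobar limit $\lim_{[n]\in\Delta}(\Xi^R\Xi)^{n+1}\sfY$ must be interpreted in the ambient category $\iTop^{\sHen}_{\CAlg^\Delta}$, which is a \emph{non-full} subcategory of $\iTop_{\CAlg^\Delta}$ (morphisms are required to be local), so even the existence and identification of this limit is something to prove, not to quote. The needed statement is in fact provable, but by a different argument than the one you give: since $\Xi$ and $\Xi^R$ do not change underlying $\infty$-topoi, the cobar resolution of $\sfY=(\calY,\calO)$ has constant underlying topos and its structure sheaves form the bar resolution $(\Psi^L_{\calY}\Psi_{\calY})^{\bullet+1}\calO$ in $\Shv_{\CAlg^\Delta}(\calY)$; one must then check (i) that limits in $\iTop_{\CAlg^\Delta}$ of diagrams with constant underlying topos are computed by geometric realization of structure sheaves, (ii) that this augmented simplicial object is split after applying $\Psi_{\calY}$, and that $\Psi_{\calY}$ is conservative and preserves geometric realizations (it admits a right adjoint, given by postcomposition with $\Theta^R$), so that the realization is $\calO$ itself, and (iii) that the resulting cone consists of local morphisms and remains a limit cone in the non-full subcategory $\iTop^{\sHen}_{\CAlg^\Delta}$, using that all maps in sight induce isomorphisms on $\pi_0$ by \ref{connected components of the unit map}. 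Separately, your termwise use of \cite[8.5.3.1]{lurie2018sag} needs $\Xi^R$ (not only $\Xi$) to preserve quasi-separated algebraic spaces; \ref{preservation of properties} treats only $\Xi$, though the claim for $\Xi^R$ does follow easily from the facts that $\Xi^R$ is a right adjoint, fixes underlying topoi, and satisfies $\Map_{\DerDM}(\Spec A,\Xi^R Z)\simeq \Map_{\SpDM_{\mathbb{Z}}}(\Spec A,Z)$ for discrete $A$.

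For comparison, the paper avoids resolving the target altogether: it runs a Postnikov induction on the \emph{source}, writing $\sfX \simeq \colim_{n} \tau_{\leq n}\sfX$ and using that $\tau_{\leq n}\calO_{\calX}$ is a square-zero extension of $\tau_{\leq n-1}\calO_{\calX}$ to produce a pushout of derived Deligne--Mumford stacks; the inductive step reduces to a mapping-space statement for the algebraic cotangent complex, handled by the equivalence $\APerf((\tau_{\leq n}\sfX)^\wedge_I)\simeq \APerf(\tau_{\leq n}\sfX)$ coming from \cite[8.5.0.3, 8.5.1.2]{lurie2018sag}, and the base case uses the $0$-truncated equivalences \ref{equivalence between 0-truncated DerDM and SpDM} and \ref{equivalence between 0-truncated fDerDM and fSpDM}. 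Your route, once the descent step is actually proved, would have the merit of invoking \cite[8.5.3.1]{lurie2018sag} as a black box and bypassing cotangent complexes and square-zero extensions entirely; but as written the central step is asserted rather than proved, and the assertion as stated (comonadicity of $\Xi$ deduced from comonadicity of $\Theta$) is not a correct justification.
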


\begin{remark}
	In the statement of \ref{formal GAGA in DAG}, we do not assume that $\sfY$ is locally noetherian (see \ref{locally noetherian derived Deligne-Mumford stacks}). In the special case where $\sfY$ is locally noetherian, \ref{formal GAGA in DAG} follows from \cite[5.1.13]{MR4560539} of Halpern-Leistner and Preygel. 
\end{remark}

\begin{pg}\textbf{Conventions}.
	We will make use of the theory of $\infty$-categories and the theory of spectral algebraic geometry developed in \cite{MR2522659}, \cite{lurie2017ha}, and \cite{lurie2018sag}. 
\end{pg}

\begin{pg}\textbf{Acknowledgements}. 
	This work was supported by Samsung Science and Technology Foundation under Project Number SSTF-BA2302-01 and the National Research Foundation of Korea (NRF) grant funded by the Korean government (MSIT) (No. RS-2024-00441954).
\end{pg}

\section{Comparison of Simplicial Commutative Rings and $\bbE_\infty$-rings}

\begin{pg}
	Our goal in this section is to study the relationship between simplicial commutative rings and connective $\bbE_\infty$-rings. First, we recall some terminology from \cite[25.1.1.1]{lurie2018sag}. 
\end{pg}

\begin{pg}
	Let $\Poly_\mathbb{Z}$ denote the ordinary category whose objects are polynomial rings $\mathbb{Z}[x_1, \ldots, x_n]$ for $n \geq 0$, and whose morphisms are homomorphisms of $\mathbb{Z}$-algebras. We will regard $\Poly_\mathbb{Z}$ as an $\infty$-category by taking its nerve; see \cite[1.1.2.6]{MR2522659}. Let $\SSet$ denote the $\infty$-category of spaces of \cite[1.2.16.1]{MR2522659}. We will denote the full subcategory of $\Fun(\Poly_\mathbb{Z}, \SSet)$ spanned by those functors which preserve finite products by $\CAlg^\Delta$ and refer to it as the \emph{$\infty$-category of simplicial commutative rings}. 
\end{pg}

\begin{notation}\label{the infinity-category of simplicial commutative rings}
	If $R$ is a simplicial commutative ring, we let $\CAlg^\Delta_R$ denote the undercategory $(\CAlg^\Delta)_{R/}$ and refer to its objects as \emph{simplicial commutative $R$-algebras}. If $R$ is a connective $\bbE_\infty$-ring in the sense of \cite[7.1.0.1]{lurie2017ha} (see also \cite[p.1201]{lurie2017ha}), we let $\CAlg^{\cn}_R$ denote the \emph{$\infty$-category of connective $\bbE_\infty$-algebras over $R$}; see \cite[7.1.3.8]{lurie2017ha}. 
\end{notation}

\begin{pg}\label{compact generation}
	Let $R$ be a simplicial commutative ring. Then the $\infty$-category $\CAlg^\Delta_R$ can be characterized as in \cite[25.1.1.2]{lurie2018sag}. For this, let $R[x_1, \ldots, x_n]$ denote the tensor product $R\otimes_{\mathbb{Z}}\mathbb{Z}[x_1, \ldots, x_n]$ in the $\infty$-category $\CAlg^\Delta$ and let $\Poly_R$ denote the full subcategory of $\CAlg^\Delta_R$ spanned by the polynomial rings $R[x_1, \ldots, x_n]$, where $n \geq 0$. Since $\Poly_R \subseteq \CAlg^\Delta_R$ is closed under finite coproducts, it follows from \cite[5.5.8.25]{MR2522659} that the canonical map $\calP_{\Sigma}(\Poly_R) \rightarrow \CAlg^\Delta_R$ is an equivalence of $\infty$-categories, where $\calP_{\Sigma}(\Poly_R)$ denotes the full subcategory of $\Fun(\Poly_R^{\op}, \SSet)$ spanned by those functors which preserve finite products. Applying \cite[5.5.8.10]{MR2522659}, we conclude that $\CAlg^\Delta_R$ is a compactly generated $\infty$-category in the sense of \cite[5.5.7.1]{MR2522659}. Note that the collection of polynomial algebras $R[x_1, \ldots, x_n]$ forms a set of compact projective generators for $\CAlg^\Delta_R$ in the sense of \cite[5.5.8.23]{MR2522659}. 
\end{pg}

\begin{pg}\label{the forgetful functor}
	Let us regard $\mathbb{Z}$ as a discrete $\bbE_\infty$-ring; see \cite[7.1.0.3]{lurie2017ha}. Then \cite[25.1.2.1]{lurie2018sag} supplies an essentially unique functor $\Theta_\mathbb{Z}: \CAlg^\Delta \rightarrow \CAlg^{\cn}_\mathbb{Z}$ which preserves small sifted colimits and is the identity on $\Poly_\mathbb{Z}$. For each object $A \in \CAlg^\Delta$, we will denote the image of $A$ under the functor $\Theta_\mathbb{Z}$ by $A^\circ$ and refer to it as the \emph{underlying connective $\bbE_\infty$-algebra of $A$ over $\mathbb{Z}$}. 
\end{pg}

\begin{definition}\label{the forgetful functor for a simplicial commutative ring}
	Let $R$ be a simplicial commutative ring. We let $\Theta_R: \CAlg^\Delta_R \rightarrow \CAlg^{\cn}_{R^\circ}$ denote the functor determined by $\Theta_\mathbb{Z}$ (see \ref{the infinity-category of simplicial commutative rings}). The functor $\Theta_R$ carries a simplicial commutative $R$-algebra $A$ to $\Theta_\mathbb{Z}(A)=A^\circ$, which we will refer to as the \emph{underlying connective $\bbE_\infty$-algebra over $R^\circ$}.
\end{definition}

\begin{notation}
	Throughout the rest of this paper, we fix a simplicial commutative ring $R$, hence the underlying connective $\bbE_\infty$-algebra $R^\circ$ over $\mathbb{Z}$, and simply denote the functor $\Theta_R$ by $\Theta$. Unless otherwise specified, the notations $R$ and $R^\circ$ will indicate the simplicial commutative ring $R$ and the underlying connective $\bbE_\infty$-ring $R^\circ$, respectively. 
\end{notation}

\begin{pg}
	Before stating our first result, we recall a bit of terminology. 
\end{pg}

\begin{pg}\label{monad}
	Let $\calC$ be an $\infty$-category. Let us regard the $\infty$-category $\Fun(\calC, \calC)$ as a monoidal $\infty$-category endowed with the composition monoidal structure of \cite[p.656]{lurie2017ha}. Using the evaluation map $\Fun(\calC, \calC) \times \calC \rightarrow \calC$, we can regard $\calC$ as left-tensored over $\Fun(\calC, \calC)$, in the sense of \cite[4.2.1.19]{lurie2017ha}. According to \cite[4.7.0.1]{lurie2017ha}, we refer to an algebra object of $\Fun(\calC, \calC)$ (in the sense of \cite[4.1.1.6]{lurie2017ha}) as a \emph{monad on $\calC$}. For each monad $T$ on $\calC$, we let $\LMod_T(\calC)$ denote the $\infty$-category of left $T$-module objects of $\calC$; see \cite[4.2.1.13]{lurie2017ha}. 
\end{pg}

\begin{pg}\label{monadic functors}
	Let $g: \calD \rightarrow \calC$ be a functor between $\infty$-categories. If $g$ admits a left adjoint $f$, then the composite functor $T=g \circ f: \calC \rightarrow \calC$ can be regarded as a monad on $\calC$ by virtue of \cite[4.7.3.3]{lurie2017ha}. According to \cite[4.7.3.4]{lurie2017ha}, the functor $g: \calD \rightarrow \calC$ is said to be \emph{monadic} if it admits a right adjoint and the induced functor $\calD \rightarrow \LMod_T(\calC)$ is an equivalence of $\infty$-categories under which $g$ corresponds to the forgetful functor $\LMod_T(\calC) \rightarrow \calC$. The functor $g$ is said to be \emph{comonadic} if the associated functor $g^{\op}: \calC^{\op} \rightarrow \calD^{\op}$ is monadic. 
\end{pg}

\begin{pg}
	According to \cite[25.1.2.4]{lurie2018sag}, the functor $\Theta_\mathbb{Z}: \CAlg^\Delta \rightarrow \CAlg^{\cn}_\mathbb{Z}$ of \ref{the forgetful functor} admits both left and right adjoints, and is both monadic and comonadic. We have the following generalization of \cite[25.1.2.4]{lurie2018sag}:
\end{pg}

\begin{proposition}\label{the forgetful functor is monadic and comonadic}
	The functor $\Theta: \CAlg^\Delta_R \rightarrow \CAlg^{\cn}_{R^\circ}$ of \emph{\ref{the forgetful functor for a simplicial commutative ring}} is conservative and admits both a left adjoint $\Theta^L$ and a right adjoint $\Theta^R$. Moreover, $\Theta$ is monadic and comonadic (see \emph{\ref{monadic functors}}). 
\end{proposition}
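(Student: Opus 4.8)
The plan is to reduce everything to the absolute case over $\mathbb{Z}$, where the analogous statement is the known result \cite[25.1.2.4]{lurie2018sag}, and then transport the conclusions along the passage to undercategories. Recall from \ref{the forgetful functor} and \ref{the forgetful functor for a simplicial commutative ring} that $\Theta = \Theta_R$ is determined by $\Theta_{\mathbb{Z}}$ via the undercategory construction, so the key structural input is a commuting square relating $\Theta_R$ on $\CAlg^\Delta_R = (\CAlg^\Delta)_{R/}$ and $\Theta_{\mathbb{Z}}$ on $\CAlg^\Delta$.

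First I would establish conservativity. Since the forgetful functors $(\CAlg^\Delta)_{R/} \to \CAlg^\Delta$ and $(\CAlg^{\cn}_{\mathbb{Z}})_{R^\circ/} \to \CAlg^{\cn}_{\mathbb{Z}}$ detect equivalences (a morphism in an undercategory is an equivalence precisely when its image in the underlying category is), and the square
$$
\xymatrix{
\CAlg^\Delta_R \ar[r]^-{\Theta_R} \ar[d] & \CAlg^{\cn}_{R^\circ} \ar[d] \\
\CAlg^\Delta \ar[r]^-{\Theta_{\mathbb{Z}}} & \CAlg^{\cn}_{\mathbb{Z}}
}
$$
commutes (the vertical maps being the forgetful functors to the absolute categories), conservativity of $\Theta_R$ follows immediately from conservativity of $\Theta_{\mathbb{Z}}$, which is part of \cite[25.1.2.4]{lurie2018sag}.

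Next I would produce the two adjoints. For the right adjoint $\Theta^R$, the cleanest route is to invoke the adjoint functor theorem: $\CAlg^\Delta_R$ is compactly generated, hence presentable (see \ref{compact generation}), $\CAlg^{\cn}_{R^\circ}$ is presentable, and $\Theta$ preserves small colimits since $\Theta_{\mathbb{Z}}$ preserves sifted colimits and both forgetful functors to the absolute categories create the relevant colimits; therefore $\Theta$ admits a right adjoint. For the left adjoint $\Theta^L$, I would rather check that $\Theta$ preserves small limits (which again reduces to the statement for $\Theta_{\mathbb{Z}}$, known from \cite[25.1.2.4]{lurie2018sag}, using that forgetful functors from undercategories create limits) and is accessible, so that the adjoint functor theorem supplies a left adjoint. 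Alternatively, one can transport the absolute adjoints $\Theta_{\mathbb{Z}}^L, \Theta_{\mathbb{Z}}^R$ along the undercategory equivalences directly; I expect the first approach to be shorter.

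Finally, for monadicity and comonadicity I would apply the Barr--Beck--Lurie criterion \cite[4.7.3.5]{lurie2017ha}. Having shown $\Theta$ is conservative and admits a right adjoint, monadicity reduces to showing that $\Theta$ preserves geometric realizations of $\Theta$-split simplicial objects; dually, comonadicity reduces to showing $\Theta$ preserves totalizations of $\Theta$-cosplit cosimplicial objects. The main obstacle is verifying these colimit/limit preservation conditions. I expect this to be manageable rather than delicate: the relevant (co)limits in the undercategories are computed by the forgetful functors down to $\CAlg^\Delta$ and $\CAlg^{\cn}_{\mathbb{Z}}$, and $\Theta_{\mathbb{Z}}$ already satisfies the corresponding preservation properties as established in \cite[25.1.2.4]{lurie2018sag}. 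Hence the verification amounts to checking compatibility of the two vertical forgetful functors in the square above with split geometric realizations and cosplit totalizations, after which the conclusions for $\Theta_{\mathbb{Z}}$ pull back to $\Theta_R$.
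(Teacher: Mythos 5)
Your route differs from the paper's: you produce the adjoints via the adjoint functor theorem, whereas the paper simply transports the adjoints of $\Theta_{\mathbb{Z}}$ to undercategories using \cite[5.2.5.1]{MR2522659}. Your route can be made to work, but the justification of the key step for the right adjoint has a genuine gap. You assert that $\Theta$ preserves all small colimits ``since $\Theta_{\mathbb{Z}}$ preserves sifted colimits and both forgetful functors to the absolute categories create the relevant colimits.'' The forgetful functor $\CAlg^\Delta_R=(\CAlg^\Delta)_{R/} \rightarrow \CAlg^\Delta$ does not create general colimits: it creates only colimits of weakly contractible diagrams. For instance, the initial object of $\CAlg^\Delta_R$ is $R$ (not $\mathbb{Z}$), and the coproduct of $A$ and $B$ in $\CAlg^\Delta_R$ is $A \otimes_R B$, which is not the coproduct $A \otimes_{\mathbb{Z}} B$ in $\CAlg^\Delta$. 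What is true (see \cite[1.2.13.8]{MR2522659}) is that the colimit of a $K$-indexed diagram in $(\CAlg^\Delta)_{R/}$ is computed in $\CAlg^\Delta$ as the colimit of the extended diagram $K^{\triangleleft} \rightarrow \CAlg^\Delta$ carrying the cone point to $R$; such cone diagrams are weakly contractible but need not be sifted (a binary coproduct becomes a pushout), so preservation of sifted colimits by $\Theta_{\mathbb{Z}}$ does not suffice. The repair is immediate from a citation you already invoke: by \cite[25.1.2.4]{lurie2018sag}, the functor $\Theta_{\mathbb{Z}}$ admits a right adjoint and hence preserves \emph{all} small colimits; applying this to the extended diagrams shows that $\Theta$ preserves all small colimits, and then \cite[5.5.2.9]{MR2522659} applies. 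Your left-adjoint step, by contrast, is sound: limits in undercategories genuinely are created by the forgetful functors, and accessibility follows from preservation of filtered colimits.

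Two further remarks. First, in your Barr--Beck step the adjoints are switched: monadicity requires the left adjoint (the monad is $\Theta \circ \Theta^L$), while comonadicity requires the right adjoint. Second, that entire verification is unnecessary: once $\Theta$ is known to admit both a left and a right adjoint, it preserves all small limits and colimits, so the hypotheses of \cite[4.7.0.3]{lurie2017ha} in both the monadic and comonadic directions hold for free. This is exactly how the paper concludes, and it lets you drop the discussion of split geometric realizations and cosplit totalizations altogether.
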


\begin{proof}
	We first note that the $\infty$-category $\CAlg^\Delta_R$ is presentable in the sense of \cite[5.5.0.1]{MR2522659} (see \ref{compact generation}) and that the functor $\Theta$ is conservative (because the functor $\Theta_\mathbb{Z}$ of \ref{the forgetful functor} is conservative by virtue of \cite[25.1.2.2]{lurie2018sag}). Since the functor $\Theta_\mathbb{Z}$ admits left and right adjoints (see \cite[25.1.2.4]{lurie2018sag}), it follows from \cite[5.2.5.1]{MR2522659} that $\Theta$ also admits left and right adjoints. In particular, $\Theta$ commutes with small colimits and limits. Applying the Barr--Beck theorem of \cite[4.7.0.3]{lurie2017ha}, we deduce that the functor $\Theta$ is both monadic and comonadic. 
\end{proof}

\begin{remark}
	The functor $\Theta^L: \CAlg^{\cn}_{R^\circ} \rightarrow \CAlg^\Delta_R$ carries a connective $\bbE_\infty$-algebra $A$ over $R^\circ$ to the simplicial commutative $R$-algebra $\Theta^L_\mathbb{Z}(A) \otimes_{\Theta^L_\mathbb{Z}(R^\circ)}R$, which is the tensor product in the $\infty$-category $\CAlg^\Delta$. In the special case where $R$ is an ordinary commutative ring, the pairs of adjoint functors $(\Theta^L, \Theta)$ and $(\Theta, \Theta^R)$ recover those of \cite[25.1.2.4]{lurie2018sag}. 
\end{remark}

\begin{notation}\label{the infinity-category of module spectra}
	Let $\Mod_{R^\circ}$ denote the \emph{$\infty$-category of $R^\circ$-module spectra} in the sense of \cite[7.1.1.2]{lurie2017ha}. We will regard $\Mod_{R^\circ}$ as equipped with the symmetric monoidal structure of \cite[4.5.2.1]{lurie2017ha}; see also \cite[4.8.2.19]{lurie2017ha}. We will often abuse terminology by referring to $R^\circ$-module spectra as \emph{$R^\circ$-modules}. We let $\Mod_{R^\circ}^{\cn}$ denote the full subcategory of $\Mod_{R^\circ}$ spanned by those $R^\circ$-modules $M$ such that $\pi_nM \simeq 0$ for every integer $n <0$; see \cite[7.1.1.10]{lurie2017ha}. We will refer to the objects of $\Mod_{R^\circ}^{\cn}$ as \emph{connective $R^\circ$-modules}.
\end{notation}

\begin{notation}\label{free algebras}
	We let $\Sym^\ast_{R^\circ}$ and $\LSym^\ast_R$ denote left adjoints to the forgetful functor $G: \CAlg^{\cn}_{R^\circ} \rightarrow \Mod^{\cn}_{R^\circ}$ and the composite functor $\CAlg^\Delta_R \stackrel{\Theta}{\rightarrow} \CAlg^{\cn}_{R^\circ} \stackrel{G}{\rightarrow} \Mod^{\cn}_{R^\circ}$, respectively (see \cite[3.1.3.14]{lurie2017ha} and \cite[25.2.2.6]{lurie2018sag}). If $M$ is a connective $R^\circ$-module, we will refer to $\Sym^\ast_{R^\circ}M$ and $\LSym^\ast_R M$ as the \emph{free $\bbE_\infty$-algebra over $R^\circ$ generated by $M$} and the \emph{derived symmetric $R$-algebra on $M$}, respectively.
\end{notation}

\begin{remark}\label{the adjunction for the derived symmetric power functor}
	By virtue of \ref{the forgetful functor is monadic and comonadic}, we can identify $\LSym^\ast_R$ with the composite functor $\Theta^L \circ \Sym^\ast_{R^\circ}$. Using this identification, the counit map $\Sym^\ast_{R^\circ} \circ G \rightarrow \id$ and the unit map $\id \rightarrow \Theta \circ \Theta^L$ determine a canonical map $\alpha_A:\Theta(\LSym^\ast_R(G(A)))\otimes_{\Sym^\ast_{R^\circ} (G(A))}A \rightarrow \Theta(\Theta^L(A))$ in $\CAlg^{\cn}_{R^\circ}$ for every connective $\bbE_\infty$-algebra $A$ over $R^\circ$. 
\end{remark}

\begin{pg}
	According to \ref{the forgetful functor is monadic and comonadic}, we can regard $\CAlg^\Delta_R$ as the $\infty$-category of left modules over the monad $\Theta \circ \Theta^L$ on $\CAlg^{\cn}_{R^\circ}$. We now give a more concrete description of the monad $\Theta \circ \Theta^L$: 
\end{pg}

\begin{theorem}\label{comparison of derived rings}
	Let $A$ be a connective $\bbE_\infty$-algebra over $R^\circ$. Then the canonical map $\alpha_A: \Theta(\LSym^\ast_R(G(A)))\otimes_{\Sym^\ast_{R^\circ} (G(A))}A \rightarrow \Theta(\Theta^L(A))$ of \emph{\ref{the adjunction for the derived symmetric power functor}} is an equivalence of $\bbE_\infty$-algebras over $R^\circ$. 
\end{theorem}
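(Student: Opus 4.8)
The plan is to prove that $\alpha_A$ is an equivalence by treating both sides as functors of $A$ and reducing to the case where $A$ is a free $\bbE_\infty$-algebra. Write $F_1(A) := \Theta(\LSym^\ast_R(G(A)))\otimes_{\Sym^\ast_{R^\circ}(G(A))}A$ and $F_2(A) := \Theta(\Theta^L(A))$, so that $\alpha$ is a natural transformation $F_1 \to F_2$ of functors $\CAlg^{\cn}_{R^\circ}\to\CAlg^{\cn}_{R^\circ}$. First I would check that both preserve small sifted colimits. For $F_2$ this is immediate: by \ref{the forgetful functor is monadic and comonadic} the functor $\Theta$ admits both a left and a right adjoint, hence preserves all colimits, and $\Theta^L$ is a left adjoint. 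For $F_1$, the functor $G$ preserves sifted colimits, while $\Sym^\ast_{R^\circ}$, $\LSym^\ast_R = \Theta^L\circ\Sym^\ast_{R^\circ}$ and $\Theta$ preserve all colimits; since the relative tensor product of $\bbE_\infty$-algebras is a pushout (the coproduct of commutative algebras is the tensor product), $F_1$ is the composite of the sifted-colimit-preserving span-valued functor $A\mapsto\bigl(\Theta\LSym^\ast_R(G(A))\leftarrow\Sym^\ast_{R^\circ}(G(A))\rightarrow A\bigr)$ with the colimit-preserving pushout functor, and so preserves sifted colimits as well.

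Next I would reduce to free algebras. Since the adjunction $(\Sym^\ast_{R^\circ},G)$ is monadic (Barr--Beck), every $A\in\CAlg^{\cn}_{R^\circ}$ is the geometric realization of its bar resolution $(\Sym^\ast_{R^\circ}\,G)^{\bullet+1}(A)$, each term of which is a free $\bbE_\infty$-algebra $\Sym^\ast_{R^\circ}(-)$. As $\alpha$ is a natural transformation between two functors that preserve geometric realizations, $\alpha_A$ is the realization of the levelwise maps $\alpha_{(\Sym^\ast_{R^\circ}G)^{\bullet+1}(A)}$; it is therefore an equivalence for all $A$ as soon as it is an equivalence for every free algebra $A=\Sym^\ast_{R^\circ}(M)$, $M\in\Mod^{\cn}_{R^\circ}$.

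For such an $A$, Remark \ref{the adjunction for the derived symmetric power functor} gives $\Theta^L(A)\simeq\LSym^\ast_R(M)$, so $F_2(A)\simeq\Theta(\LSym^\ast_R(M))$. To analyze $F_1(A)$ I would apply the conservative forgetful functor $G\colon\CAlg^{\cn}_{R^\circ}\to\Mod^{\cn}_{R^\circ}$, which preserves relative tensor products. Writing $T=G\circ\Sym^\ast_{R^\circ}$ and $L=G\circ\Theta\circ\LSym^\ast_R$ for the associated monads on $\Mod^{\cn}_{R^\circ}$ and setting $V=G(A)=T(M)$, the statement becomes the identity $L(V)\otimes_{T(V)}V\simeq L(M)$ in $\Mod^{\cn}_{R^\circ}$, where the relative tensor is taken along the multiplication $T(V)=T(T(M))\to T(M)=V$ and $L(V)$ is regarded as a $T(V)$-module through the monad map $T\to L$ induced by the unit $\id\to\Theta\,\Theta^L$.

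The heart of the matter, and the step I expect to be the main obstacle, is this last identity. Its two sides are two descriptions of the underlying module of $\LSym^\ast_R(M)=\Theta^L(\Sym^\ast_{R^\circ}(M))$: the right-hand side is its value on the free generator $M$, whereas the left-hand side computes $\Theta^L$ through the standard counit resolution $\Sym^\ast_{R^\circ}(G(A))\to A$. The key extra structure is that, because $A$ is free, this resolution is split: the counit $\Sym^\ast_{R^\circ}\bigl(G\,\Sym^\ast_{R^\circ}(M)\bigr)\to\Sym^\ast_{R^\circ}(M)$ admits the section $\Sym^\ast_{R^\circ}(\eta_M)$ induced by the unit $\eta_M\colon M\to G\,\Sym^\ast_{R^\circ}(M)$. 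I would exploit this splitting, together with the compatibility of the derived symmetric algebra functor with the bar construction, to collapse the relative tensor product and recover $\LSym^\ast_R(M)$. The delicate point is that the two resulting bar presentations do \emph{not} agree levelwise, so the collapse must be carried out at the level of realizations rather than termwise; making this comparison precise is where the real work lies.
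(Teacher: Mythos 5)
Your reduction steps are sound: $F_1$ and $F_2$ do preserve sifted colimits, the bar-resolution argument reducing to free algebras is valid, $\Theta^L(\Sym^\ast_{R^\circ}(M))\simeq\LSym^\ast_R(M)$ is correct, and your reformulation of the free case as the module identity $L(V)\otimes_{T(V)}V\simeq L(M)$ (with $V=T(M)$) is accurate. But the proposal stops exactly where the theorem begins. That identity is not a residual technicality; it is the entire content of the statement, and the passage to free algebras does not make it more accessible: the general assertion $L(G(A))\otimes_{T(G(A))}G(A)\simeq G(\Theta(\Theta^L(A)))$ follows from the free case by the very sifted-colimit argument you invoke, so the two are logically equivalent, and the core difficulty --- that the left-hand side is a bar construction in $(\Mod^{\cn}_{R^\circ},\otimes_{R^\circ})$ while $\Theta^L$ is a priori computed by a bar construction whose ``tensor'' is composition of endofunctors of $\Mod^{\cn}_{R^\circ}$ --- is present in identical form for free algebras. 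The mechanism you propose, the splitting $\Sym^\ast_{R^\circ}(\eta_M)$ of the counit, furnishes an extra degeneracy for the monadic bar construction $L(T^{\bullet}(V))$, but it does not act in any evident way on the bar construction computing $L(V)\otimes_{T(V)}V$; this is precisely the levelwise mismatch you flag, and you leave it unresolved (``making this comparison precise is where the real work lies''). By your own account, then, the proof is incomplete at its decisive step.

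The missing bridge is what the paper supplies, and it needs no reduction to free algebras. The paper identifies $\CAlg^{\cn}_{R^\circ}$ and $\CAlg^\Delta_R$ with left modules over the monads $T=G\circ\Sym^\ast_{R^\circ}$ and $T'=G\circ\Theta\circ\LSym^\ast_R$ on $\Mod^{\cn}_{R^\circ}$, observes that under these identifications $\Theta$ is restriction along the map of monads $T\to T'$ induced by the unit $\id\to\Theta\circ\Theta^L$, and hence that $\Theta^L$ is \emph{extension of scalars} along this map of monads, which is computed by a relative tensor product; the fact that the forgetful functor $G$ preserves tensor products (\cite[3.2.4.4]{lurie2017ha}), i.e.\ that pushouts of commutative algebras are relative tensor products of underlying modules, then identifies $G(\alpha_A)$ with an equivalence for every $A$ at once. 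The recognition of $\Theta^L$ as extension of scalars along a monad map, combined with that preservation statement, is the key input your argument lacks; without it, your outline records a correct strategy of reduction but no proof of the statement being reduced to.
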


\begin{proof}
	Since the forgetful functor $G: \CAlg^{\cn}_{R^\circ} \rightarrow \Mod^{\cn}_{R^\circ}$ is conservative (see \cite[3.2.2.6]{lurie2017ha}), it will suffice to show that the map $G(\alpha_A)$ is an equivalence of $R^\circ$-modules. We first note that the functors $G$ and $\Theta$ are conservative and preserve geometric realizations of simplicial objects (see \cite[3.2.3.2]{lurie2017ha} and \ref{the forgetful functor is monadic and comonadic}). It follows from the Barr--Beck theorem of \cite[4.7.0.3]{lurie2017ha} that we can identify $\CAlg^{\cn}_{R^\circ}$ and $\CAlg^\Delta_R$ with the $\infty$-categories of left module objects over the monads $(G \circ \Theta)\circ \LSym^\ast_R$ and $G \circ \Sym^\ast_{R^\circ}$ on $\Mod^{\cn}_{R^\circ}$, respectively. Under these identifications, the functor $\theta$ corresponds to the forgetful functor $F: \LMod_{G \circ \Theta \circ \Theta^L \circ \Sym^\ast_{R^\circ}}(\Mod^{\cn}_{R^\circ}) \rightarrow \LMod_{G \circ \Sym^\ast_{R^\circ}}(\Mod^{\cn}_{R^\circ})$ associated to the map of monads $G \circ \Sym^\ast_{R^\circ} \rightarrow G \circ \Theta \circ \Theta^L \circ \Sym^\ast_{R^\circ}$ which is determined by the unit map $\id \rightarrow \Theta \circ \Theta^L$; in particular, we can identify the functor $\Theta^L$ with the extension of scalars along the map of monads $G \circ \Sym^\ast_{R^\circ} \rightarrow G \circ \Theta \circ \Theta^L \circ \Sym^\ast_{R^\circ}$ on $\Mod^{\cn}_{R^\circ}$. Combining this with the fact that the forgetful functor $G$ preserves tensor products (see \cite[3.2.4.4]{lurie2017ha}), we conclude that $G(\alpha_A)$ is an equivalence in $\Mod^{\cn}_{R^\circ}$ as desired.
\end{proof}

\begin{corollary}\label{connected components of the unit map}
	Let $A$ be a connective $\bbE_\infty$-algebra over $R^\circ$. Then the unit map $A \rightarrow \Theta(\Theta^L(A))$ for the adjunction between $\Theta^L$ and $\Theta$ of \emph{\ref{the forgetful functor is monadic and comonadic}} induces an isomorphism $\pi_0A \rightarrow \pi_0(\Theta(\Theta^L(A)))$.
\end{corollary}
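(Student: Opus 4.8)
The plan is to use the concrete description of the monad $\Theta \circ \Theta^L$ furnished by Theorem \ref{comparison of derived rings}. For a connective $\bbE_\infty$-algebra $A$ over $R^\circ$, that result identifies $\Theta(\Theta^L(A))$ with the relative tensor product $\Theta(\LSym^\ast_R(G(A))) \otimes_{\Sym^\ast_{R^\circ}(G(A))} A$, and the unit map $A \to \Theta(\Theta^L(A))$ is the base-change map of this tensor product along the augmentation from the free algebra. So the whole question reduces to understanding what happens on $\pi_0$, where I expect the exotic derived structure to wash out and leave only the classical statement that free simplicial commutative rings and free $\bbE_\infty$-rings agree in degree zero.

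**Key steps.**

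First I would reduce to the free case. The unit map $A \to \Theta(\Theta^L(A))$ is natural in $A$, and both $\Theta$, $\Theta^L$, and $\pi_0$ interact well with the relevant colimits: $\Theta$ preserves sifted colimits (by construction in \ref{the forgetful functor}) and $\Theta^L$ preserves all colimits as a left adjoint, while $\pi_0$ commutes with sifted colimits in each category. Writing an arbitrary $A$ as a sifted colimit of free $\bbE_\infty$-algebras $\Sym^\ast_{R^\circ}(M)$ on connective $R^\circ$-modules, it suffices to verify the isomorphism on $\pi_0$ when $A = \Sym^\ast_{R^\circ}(M)$ is free. Second, for such a free algebra, the composite $\Theta^L(\Sym^\ast_{R^\circ}(M)) \simeq \LSym^\ast_R(M)$ by \ref{the adjunction for the derived symmetric power functor}, so the unit map becomes the comparison $\Sym^\ast_{R^\circ}(M) \to \Theta(\LSym^\ast_R(M))$ between the free $\bbE_\infty$-algebra and the underlying $\bbE_\infty$-algebra of the derived symmetric algebra on $M$. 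Third, by a further reduction along sifted colimits I may take $M$ to be a finite free module, i.e. a finite sum of shifts $\bigoplus R^\circ$; since $\pi_0$ only sees the connective part, I can reduce to $M$ in degree zero, where the claim is the classical fact that $\pi_0$ of a free $\bbE_\infty$-algebra on a finite free module and $\pi_0$ of the corresponding polynomial simplicial commutative ring are both the polynomial ring $\pi_0(R^\circ)[x_1,\ldots,x_n]$, and the comparison map is the identity on generators.

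**The main obstacle.**

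The delicate point is the bookkeeping of the reductions: I must ensure that taking $\pi_0$, forming sifted colimits, and applying $\Theta$ and $\Theta^L$ all commute in the right way, and in particular that the symmetric-power filtration on $\Sym^\ast_{R^\circ}(M)$ and on $\LSym^\ast_R(M)$ match up in degree zero. The essential input is that the $0$-th symmetric power contributes $R^\circ$ (resp.\ $R$) and the first symmetric power $M$ contributes nothing to $\pi_0$ once one restricts to $M$ concentrated in positive degrees, so that only the degree-zero part of $M$ survives. Concretely, the hardest verification is the base case: that the canonical map $\pi_0(\Sym^\ast_{R^\circ}(M)) \to \pi_0(\Theta(\LSym^\ast_R(M)))$ is an isomorphism for $M$ a finitely generated free $R^\circ$-module in degree zero. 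This follows because on $\Poly$ the functor $\Theta_\mathbb{Z}$ is the identity (by \ref{the forgetful functor}), so both sides compute the same polynomial algebra; once this base case is secured, the colimit and naturality arguments propagate the isomorphism to all connective $A$.
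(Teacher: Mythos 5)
Your proof is correct, but it takes a genuinely different route from the paper's. The paper uses Theorem \ref{comparison of derived rings} in its full tensor-product form: since all terms are connective, $\pi_0$ of the relative tensor product $\Theta(\LSym^\ast_R(G(A)))\otimes_{\Sym^\ast_{R^\circ}(G(A))}A$ is the ordinary tensor product of $\pi_0$'s, and this collapses to $\pi_0A$ because $\pi_0\Theta(\LSym^\ast_R(G(A)))$ and $\pi_0\Sym^\ast_{R^\circ}(G(A))$ are both the classical symmetric algebra on $\pi_0A$ over $\pi_0R$ --- a three-line computation. Your argument, despite invoking that theorem in the framing paragraph, never actually uses the tensor-product formula: the only input you need is the identification $\LSym^\ast_R \simeq \Theta^L\circ\Sym^\ast_{R^\circ}$ of \ref{the adjunction for the derived symmetric power functor}, after which you run two rounds of sifted-colimit reduction (first to free algebras, then to finitely generated free modules) and check the polynomial base case, where $\Theta$ is the identity on $\Poly$ by construction. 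What this buys: your proof is more self-contained, since it bypasses Theorem \ref{comparison of derived rings} entirely and inlines a proof of the fact that the paper simply calls an ``observation'' (that derived symmetric algebras and free $\bbE_\infty$-algebras agree on $\pi_0$); the cost is length and the bookkeeping of colimit-preservation, which you handle correctly ($\Theta^L$ and $\Theta$ preserve all colimits by \ref{the forgetful functor is monadic and comonadic}, and $\pi_0$ preserves sifted colimits). One small imprecision: in the second reduction the compact projective generators of $\Mod^{\cn}_{R^\circ}$ are the finite free modules $(R^\circ)^n$ concentrated in degree zero, so no shifts ever enter; your phrase ``finite sum of shifts, and $\pi_0$ only sees the connective part'' is looser reasoning than needed, and the clean statement is just projective generation of $\Mod^{\cn}_{R^\circ}$ by finite free modules.
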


\begin{proof}
	Using \ref{comparison of derived rings}, we see that $\pi_0\Theta(\Theta^L(A))$ can be identified with the usual tensor product of $\pi_0\Theta(\LSym^\ast_R(G(A)))$ and $\pi_0A$ over $\pi_0\Sym^\ast_{R^\circ} (G(A))$. The desired result follows from the observation that both $\pi_0 \Theta(\LSym^\ast_R(G(A)))$ and $\pi_0\Sym^\ast_{R^\circ} (G(A))$ are equivalent to the usual symmetric algebra on $\pi_0A$, which we regard as a $\pi_0R$-module. 
\end{proof}

\begin{corollary}\label{counit map and injection on connected components} 
	For every simplicial commutative $R$-algebra $A$, the counit map $\Theta(\Theta^R(A)) \rightarrow A$ for the adjunction between $\Theta$ and $\Theta^R$ of \emph{\ref{the forgetful functor is monadic and comonadic}} is injective on connected components.
\end{corollary}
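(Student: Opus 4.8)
The counit of the adjunction $(\Theta,\Theta^R)$ of \ref{the forgetful functor is monadic and comonadic} is a natural transformation $\Theta\circ\Theta^R\to\id$; so, fixing a connective $\bbE_\infty$-algebra $A$ over $R^\circ$, the map in question is the component $\Theta(\Theta^R(A))\to A$, and we must show it is injective on $\pi_0$. The plan is to reduce this to a formal \'etaleness property of a comparison map between two models of the ``affine line'' over $R^\circ$. Throughout, write $R^\circ[x]:=\Theta(R[x])$, where $R[x]$ is the free simplicial commutative $R$-algebra on one generator (see \ref{compact generation}); since $\Theta$ preserves coproducts (it admits the right adjoint $\Theta^R$) and is the identity on $\Poly_R$, we have $R^\circ[x]\simeq R^\circ\otimes_{\mathbb{Z}}\mathbb{Z}[x]$, with $\pi_\ast R^\circ[x]\cong(\pi_\ast R^\circ)[x]$. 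Because $R[x]$ corepresents the underlying space of a simplicial commutative $R$-algebra and $\Theta$ preserves underlying spaces, hence homotopy groups (\cite[25.1.2.1]{lurie2018sag}), the adjunction $(\Theta,\Theta^R)$ supplies natural identifications
\[
\pi_0\Theta(\Theta^R(A))\cong\pi_0\Theta^R(A)\cong\pi_0\Map_{\CAlg^\Delta_R}(R[x],\Theta^R(A))\cong\pi_0\Map_{\CAlg^{\cn}_{R^\circ}}(R^\circ[x],A).
\]
First I would verify, using the triangle identities for $(\Theta,\Theta^R)$, that under these identifications the counit induces on $\pi_0$ the map sending the homotopy class of $\phi\colon R^\circ[x]\to A$ to the image $\pi_0(\phi)(x)$ of the element $x\in\pi_0 R^\circ[x]$ in $\pi_0 A$.

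Next I would recognize this last map as $\pi_0$ of precomposition with a canonical comparison map. Set $B:=\Sym^\ast_{R^\circ}(R^\circ)$, the free $\bbE_\infty$-algebra over $R^\circ$ on one generator (see \ref{free algebras}), so that $\Map_{\CAlg^{\cn}_{R^\circ}}(B,A)\simeq\Omega^\infty A$ has $\pi_0$ equal to $\pi_0 A$. Let $g\colon B\to R^\circ[x]$ be the morphism classifying $x$, i.e.\ adjoint to the $R^\circ$-module map $R^\circ\to R^\circ[x]$ carrying $1$ to $x$. Since the composite $B\xrightarrow{g}R^\circ[x]\xrightarrow{\phi}A$ classifies $\pi_0(\phi)(x)$, the map of the previous paragraph is identified with $\pi_0(g^\ast)$, where $g^\ast\colon\Map_{\CAlg^{\cn}_{R^\circ}}(R^\circ[x],A)\to\Map_{\CAlg^{\cn}_{R^\circ}}(B,A)$ is precomposition by $g$. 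It therefore suffices to prove that $g^\ast$ is a monomorphism of spaces for every $A$.

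The key input is that $g$ is formally \'etale and that $\pi_0(g)$ is an isomorphism. Indeed $g$ is the base change along $\mathbb{Z}\to R^\circ$ of the analogous map $\Sym^\ast_{\mathbb{Z}}(\mathbb{Z})\to\mathbb{Z}[x]$; the relative cotangent complexes $L_{B/R^\circ}$ and $L_{R^\circ[x]/R^\circ}$ are each free of rank one concentrated in degree zero, and $g$ carries generator to generator, so the transitivity cofiber sequence gives $L_{R^\circ[x]/B}\simeq0$, while $\pi_0(g)\colon(\pi_0 R^\circ)[x']\to(\pi_0 R^\circ)[x]$ is an isomorphism. I would then run the standard deformation-theoretic argument: writing $A\simeq\lim_n\tau_{\le n}A$ and using $L_{R^\circ[x]/B}\simeq0$, the homotopy fiber of $g^\ast$ over a point $u\colon B\to A$ is equivalent to $\Map_{\CAlg^{\cn}_{B}}(R^\circ[x],\pi_0 A)\cong\mathrm{Hom}_{\pi_0 B}(\pi_0 R^\circ[x],\pi_0 A)$, which, since $\pi_0(g)$ is an isomorphism, is either empty or a single point. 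Hence $g^\ast$ is $(-1)$-truncated, so $\pi_0(g^\ast)$ is injective, as desired. The main obstacle I anticipate is the bookkeeping in the first step---matching the counit with precomposition by $g$ through the two adjunctions and the $\pi_0$-preservation of $\Theta$---together with the verification that $L_{R^\circ[x]/B}$ vanishes, which is precisely what makes the fibers of $g^\ast$ rigid.
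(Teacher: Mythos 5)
Your opening reduction is correct, and it is in fact the same reduction the paper makes: writing $B=\Sym^\ast_{R^\circ}(R^\circ)$ and $g\colon B\to\Theta(R[x])$ for the map classifying $x$, the underlying-space map of the counit $\Theta(\Theta^R(A))\to A$ is identified with precomposition $g^\ast\colon\Map_{\CAlg^{\cn}_{R^\circ}}(\Theta(R[x]),A)\to\Map_{\CAlg^{\cn}_{R^\circ}}(B,A)$, so everything comes down to showing $\pi_0(g^\ast)$ is injective. The gap is in your final step. The claim that $L_{\Theta(R[x])/R^\circ}$ is free of rank one concentrated in degree zero --- and hence that $L_{\Theta(R[x])/B}\simeq 0$ --- is false in general. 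What is free of rank one is the \emph{algebraic} cotangent complex $L^{\alg}_{R[x]/R}$ of the simplicial commutative ring $R[x]$ (see \cite[25.3.1.6]{lurie2018sag}); the $\bbE_\infty$-cotangent complex of the flat polynomial ring $\Theta(R[x])\simeq R^\circ\otimes_{\mathbb{Z}}\mathbb{Z}[x]$ is a different object with nontrivial higher homotopy unless $\mathbb{Q}\subseteq\pi_0R$. Concretely: if $L_{\Theta(R[x])/B}$ vanished then, since $g$ is a map of connective $\bbE_\infty$-rings inducing an isomorphism on $\pi_0$, the map $g$ would be an equivalence by \cite[7.4.3.4]{lurie2017ha}; but already for $R=\mathbb{Z}$ this fails, because $\pi_1\Sym^\ast_{\mathbb{Z}}(\mathbb{Z})\cong\bigoplus_{n\geq 0}H_1(\Sigma_n;\mathbb{Z})$ contains $H_1(\Sigma_2;\mathbb{Z})\cong\mathbb{Z}/2$, whereas $\pi_1\mathbb{Z}[x]=0$. (When $\pi_0R$ contains $\mathbb{Q}$ the map $g$ \emph{is} an equivalence and your claim holds trivially, consistent with $\Theta$ being an equivalence in characteristic zero; but the corollary concerns arbitrary $R$.) So the deformation-theoretic rigidity you invoke is precisely the point at which a non-formal input is needed.

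That input is exactly what the paper's Theorem \ref{comparison of derived rings} provides, and this is how its proof proceeds: applying \ref{comparison of derived rings} to $R^\circ$ yields an equivalence $\Theta(R[x])\otimes_{B}R^\circ\simeq\Theta(\Theta^L(R^\circ))\simeq R^\circ$, i.e.\ a pushout square in $\CAlg^{\cn}_{R^\circ}$ whose two legs out of $B$ are $g$ and the counit $B\to R^\circ$. Mapping this square into $A$ and using that $\Map_{\CAlg^{\cn}_{R^\circ}}(R^\circ,A)\simeq\ast$ produces a pullback square of spaces exhibiting the homotopy fiber of $g^\ast$ over the corresponding point as contractible; since $g^\ast$ underlies a map of connective spectra, $\pi_0(g^\ast)$ is a homomorphism of abelian groups, and the long exact sequence then gives injectivity. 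Note that your intermediate goal (that $g^\ast$ is $(-1)$-truncated) is true, but it cannot be obtained from a vanishing relative cotangent complex; replacing your last step by this application of \ref{comparison of derived rings} turns your argument into the paper's proof.
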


\begin{proof}
	Applying \ref{comparison of derived rings} to $R^\circ$, we obtain a pullback square of spaces
$$
\Pull{\ast}{\ast}{\Map_{\Mod_{R^\circ}}(R^\circ, G(\Theta(\Theta^R(A)))}{\Map_{\Mod_{R^\circ}}(R^\circ, G(A)),}{}{}{}{}
$$
where the vertical maps correspond to the zero maps. The desired result follows immediately from the long exact sequence of homotopy groups, since we can identify the bottom horizontal map with the map between the underlying spaces of $\Theta(\Theta^R(A))$ and $A$.
\end{proof}

\section{Comparison of Derived and Spectral Deligne-Mumford Stacks}

\begin{pg}
	The main goal of this section is to compare the theories of derived and spectral Deligne-Mumford stacks. We begin by studying sheaves on an $\infty$-topos taking values in the $\infty$-categories $\CAlg^\Delta_R$ and $\CAlg^{\cn}_{R^\circ}$.
\end{pg}

\begin{notation}
	Let $\calX$ be an $\infty$-topos (see \cite[6.1.0.4]{MR2522659}). If $\calC$ is an $\infty$-category, we let $\Shv_{\calC}(\calX)$ denote the \emph{$\infty$-category of $\calC$-valued sheaves on $\calX$}: that is, the full subcategory of $\Fun(\calX^{\op}, \calC)$ spanned by those functors which preserve small limits; see \cite[1.3.1.4]{lurie2018sag}. 
\end{notation}

\begin{pg}\label{comparison for sheaves on derived rings}
	The adjunction $\Adjoint{\Theta^L}{\CAlg^{\cn}_{R^\circ}}{\CAlg^\Delta_R}{\Theta}$ of \ref{the forgetful functor is monadic and comonadic} determines a pair of adjoint functors $\Adjoint{\Psi^L_{\calX}}{\Shv_{\CAlg^{\cn}_{R^\circ}}(\calX)}{\Shv_{\CAlg^\Delta_R}(\calX)}{\Psi_{\calX}}$, where the right adjoint $\Psi_{\calX}$ is given by postcomposition with $\Theta$. To describe the left adjoint, we note that the $\infty$-category $\CAlg^{\cn}_{R^\circ}$ is presentable in the sense of \cite[5.5.0.1]{MR2522659}, so that the inclusion functor $\Shv_{\CAlg^\Delta_R}(\calX) \rightarrow \Fun(\calX^{\op}, \CAlg^\Delta_R)$ admits a left adjoint $L_{\calX}: \Fun(\calX^{\op}, \CAlg^\Delta_R) \rightarrow \Shv_{\CAlg^\Delta_R}(\calX)$, which we refer to as the \emph{sheafification functor}; see \cite[1.3.4.3]{lurie2018sag}. The functor $\Psi^L_{\calX}$ carries each sheaf $\calO: \calX^{\op} \rightarrow \CAlg^{\cn}_{R^\circ}$ to $L_{\calX}(\Theta^L \circ \calO)$. 
\end{pg}

\begin{remark}\label{conservative at the level of sheaves}
	The functor $\Psi_{\calX}$ is conservative, since the functor $\Theta$ is conservative (see \ref{the forgetful functor is monadic and comonadic}).
\end{remark}

\begin{pg}\label{adjunction between sheaves on topoi taking values in a derived ring} 
	Let $\calC$ be a compactly generated $\infty$-category, in the sense of \cite[5.5.7.1]{MR2522659}. Let $f^\ast: \calY \rightarrow \calX$ be a \emph{geometric morphism} of $\infty$-topoi: that is, it preserves small colimits and finite limits. Then precomposition with $f^\ast$ determines a functor $\Shv_{\calC}(\calX) \rightarrow \Shv_{\calC}(\calY)$ which admits a left adjoint by virtue of \cite[21.2.2.4]{lurie2018sag}. Applying this observation to the compactly generated $\infty$-categories $\CAlg^\Delta_R$ and $\CAlg^{\cn}_{R^\circ}$ (see \ref{compact generation} and \cite[7.2.4.27]{lurie2017ha}), we obtain pairs of adjoint functors $\Adjoint{f_\Delta^\ast}{\Shv_{\CAlg^\Delta_R}(\calY)}{\Shv_{\CAlg^\Delta_R}(\calX)}{f^\Delta_\ast}$ and $\Adjoint{f_{\bbE_\infty}^\ast}{\Shv_{\CAlg^{\cn}_{R^\circ}}(\calY)}{\Shv_{\CAlg^{\cn}_{R^\circ}}(\calX)}{f^{\bbE_\infty}_\ast}$. Note that we have an equivalence of functors $\Psi_{\calY} \circ f^\Delta_\ast \simeq f^{\bbE_\infty}_\ast \circ \Psi_{\calX}$. 
\end{pg}

\begin{pg}
	For later use, we record the following:
\end{pg}

\begin{lemma}\label{left adjointable}
	Let $f^\ast: \calY \rightarrow \calX$ be a geometric morphism of $\infty$-topoi. Then the diagram 
$$
\Pull{\Shv_{\CAlg^\Delta_R}(\calX)}{\Shv_{\CAlg^\Delta_R}(\calY)}{\Shv_{\CAlg^{\cn}_{R^\circ}}(\calX)}{\Shv_{\CAlg^{\cn}_{R^\circ}}(\calY)}{f^\Delta_\ast}{\Psi_{\calX}}{\Psi_{\calY}}{f^{\bbE_\infty}_\ast}
$$
is left adjointable: that is, the canonical map $f_{\bbE_\infty}^\ast \circ \Psi_{\calY} \rightarrow \Psi_{\calX} \circ f_\Delta^\ast$ is an equivalence of functors from $\Shv_{\CAlg^\Delta_R}(\calY)$ to $\Shv_{\CAlg^{\cn}_{R^\circ}}(\calX)$. 
\end{lemma}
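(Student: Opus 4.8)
The plan is to recognize the canonical map as the Beck--Chevalley mate of the commuting square of right adjoints recorded in \ref{adjunction between sheaves on topoi taking values in a derived ring}, and to show that this mate is an equivalence once every functor in sight is expressed through the tensor product of presentable $\infty$-categories.

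First I would reduce to a statement about colimit-preserving functors. Both $f^\ast_\Delta$ and $f^\ast_{\bbE_\infty}$ preserve small colimits, being left adjoints. The functor $\Psi_{\calX}$ (postcomposition with $\Theta$) preserves small colimits as well: since $\Theta$ admits the right adjoint $\Theta^R$ of \ref{the forgetful functor is monadic and comonadic} and both $\Theta$ and $\Theta^R$ preserve small limits, postcomposition with $\Theta$ admits a right adjoint given by postcomposition with $\Theta^R$ (which carries sheaves to sheaves, as $\Theta^R$ is limit-preserving). Consequently both composites $f^\ast_{\bbE_\infty}\circ \Psi_{\calY}$ and $\Psi_{\calX}\circ f^\ast_\Delta$ preserve small colimits, so it suffices to verify that the mate is an equivalence after restriction to a family of objects generating $\Shv_{\CAlg^\Delta_R}(\calY)$ under small colimits.

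The key step is the identification with an external tensor product. For any $\infty$-topos $\calX$ and any presentable $\infty$-category $\calC$ one has $\Shv_{\calC}(\calX)\simeq \Fun^{R}(\calX^{\op},\calC)\simeq \calX\otimes\calC$ in $\mathcal{P}r^L$ (see \cite[4.8.1.17]{lurie2017ha}). Under this identification, the functor $\Psi_{\calX}$ corresponds to $\id_{\calX}\otimes\Theta$ (using that $\Theta$ preserves both small limits and small colimits), while the pullback $f^\ast_{\calC}$ corresponds to $f^\ast\otimes \id_{\calC}$, where $f^\ast\colon\calY\to\calX$ is the inverse-image functor of the geometric morphism viewed as a morphism of $\mathcal{P}r^L$ (this is the content of \cite[21.2.2.4]{lurie2018sag}). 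Both composites $f^\ast_{\bbE_\infty}\circ\Psi_{\calY}$ and $\Psi_{\calX}\circ f^\ast_\Delta$ are thereby identified with the single functor $f^\ast\otimes\Theta$, and the mate is identified with the canonical equivalence comparing the two orders in which this external tensor may be formed; in particular it is an equivalence.

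The main obstacle I anticipate is bookkeeping rather than conceptual: one must check that the abstractly defined Beck--Chevalley mate agrees with the canonical commutation equivalence furnished by the symmetric monoidal structure on $\mathcal{P}r^L$, that is, that forming mates is compatible with tensoring functors on distinct factors. To make this rigorous I would verify that the unit and counit appearing in the mate are, after the identifications above, the units and counits of $f^\ast\otimes\id_{\calC}$ and $\id_{\calX}\otimes\Theta$, which reduces to the naturality of the equivalence $\Shv_{\calC}(\calX)\simeq\calX\otimes\calC$ in each variable. As an independent check, using the colimit reduction of the second paragraph one can instead test the mate on the sheaves freely generated by a pair $(U,P)$ with $U\in\calY$ and $P\in\Poly_R$, where $f^\ast$ and $\Theta$ can both be computed explicitly before sheafification.
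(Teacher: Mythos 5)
Your argument is correct, but it takes a genuinely different route from the paper's. The paper proves the lemma by passing to classifying $\infty$-topoi: using the compact generation of $\CAlg^\Delta_R$ and $\CAlg^{\cn}_{R^\circ}$ (this is why \ref{compact generation} and \cite[7.2.4.27]{lurie2017ha} matter), it identifies $\Shv_{\CAlg^\Delta_R}(\calY)$ with the $\infty$-category of geometric morphisms $\Fun^\ast(\Fun^\omega(\CAlg^\Delta_R, \SSet), \calY)$ via \cite[21.1.3.3, 21.1.4.2, 21.1.4.3]{lurie2018sag}; under this identification $\Psi_{\calY}$ becomes precomposition with the geometric morphism $T_0$ induced by $\Theta$, and $f^\ast_\Delta$ becomes postcomposition with $f^\ast$, so the square of left adjoints commutes simply because the two composition operations act on opposite sides. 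You instead work inside $\mathcal{P}r^L$, identifying $\Shv_{\calC}(\calX) \simeq \calX \otimes \calC$, $\Psi_{\calX} \simeq \id_{\calX}\otimes\Theta$ (legitimate precisely because $\Theta$ preserves both limits and colimits, so that both functors have postcomposition with $\Theta^R$ as right adjoint), and $f^\ast_{\calC} \simeq f^\ast\otimes\id_{\calC}$ (here \cite[21.2.2.4]{lurie2018sag} only gives existence of the left adjoint; the identification itself needs the uniqueness-of-adjoints argument, or your check on the sheaves freely generated by pairs $(U,P)$, which does work), whence both composites become $f^\ast\otimes\Theta$. What your approach buys is generality and economy: it needs only presentability of the coefficient $\infty$-categories, not compact generation, and it exhibits all four functors as facets of a single object. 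What the paper's approach buys is that it meshes with how $f^\ast_\Delta$ was actually constructed in \cite[21.2.2.4]{lurie2018sag} (itself via classifying topoi), so fewer identifications have to be re-derived. Finally, the coherence issue you flag — that the equivalence produced by your identifications must agree with the Beck--Chevalley mate of the given commutativity $\Psi_{\calY}\circ f^\Delta_\ast \simeq f^{\bbE_\infty}_\ast\circ\Psi_{\calX}$ — is not a defect peculiar to your argument: the paper's proof has exactly the same implicit step (it also only exhibits a commutative square of left adjoints), and your sketch for closing it (matching units and counits, or equivalently invoking the involutivity of the mate correspondence together with the reduction to colimit generators) is a sound way to do so.
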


\begin{proof}
	We wish to show that the diagram
$$
\xymatrix{
\Shv_{\CAlg^\Delta_R}(\calY) \ar[rr]^-{\Psi_{\calY}} \ar[d]^-{f_\Delta^\ast} && \Shv_{\CAlg^{\cn}_{R^\circ}}(\calY) \ar[d]^-{f_{\bbE_\infty}^\ast} \\
\Shv_{\CAlg^\Delta_R}(\calX) \ar[rr]^-{\Psi_{\calX}} && \Shv_{\CAlg^{\cn}_{R^\circ}}(\calX)
}
$$
commutes. Let $\Fun^\omega(\CAlg^\Delta_R, \SSet) \subseteq \Fun(\CAlg^\Delta_R, \SSet)$ and $\Fun^\omega(\CAlg^{\cn}_{R^\circ}, \SSet) \subseteq \Fun(\CAlg^{\cn}_{R^\circ}, \SSet)$ denote the full subcategories spanned by those functors which preserve small filtered colimits. Let $\Fun^\ast(\Fun^\omega(\CAlg^\Delta_R, \SSet), \calY)$ and $\Fun^\ast(\Fun^\omega(\CAlg^{\cn}_{R^\circ}, \SSet), \calY)$ denote the $\infty$-categories of geometric morphisms from $\Fun^\omega(\CAlg^\Delta_R, \SSet)$ and $\Fun^\omega(\CAlg^{\cn}_{R^\circ}, \SSet)$ to $\calY$, respectively (see \cite[21.1.1.2]{lurie2018sag}). Let $T_0: \Fun^\omega(\CAlg^{\cn}_{R^\circ}, \SSet) \rightarrow \Fun^\omega(\CAlg^\Delta_R, \SSet)$ denote the functor given by composition with $\Theta$. According to \cite[21.1.4.3]{lurie2018sag}, $\Fun^\omega(\CAlg^\Delta_R, \SSet)$ and $\Fun^\omega(\CAlg^{\cn}_{R^\circ}, \SSet)$ are classifying $\infty$-topoi for $\CAlg^\Delta_R$-valued and $\CAlg^{\cn}_{R^\circ}$-valued sheaves in the sense of \cite[21.1.0.1]{lurie2018sag}, respectively. Combining this with \cite[21.1.3.3]{lurie2018sag} and \cite[21.1.4.2]{lurie2018sag}, 
we see that the above diagram can be identified with the commutative diagram
$$
\xymatrix{
\Fun^\ast(\Fun^\omega(\CAlg^\Delta_R, \SSet), \calY) \ar[rr]^-{\circ T_0} \ar[d]^-{f^\ast \circ} && \Fun^\ast(\Fun^\omega(\CAlg^{\cn}_{R^\circ}, \SSet), \calY) \ar[d]^-{f^\ast \circ} \\
\Fun^\ast(\Fun^\omega(\CAlg^\Delta_R, \SSet), \calX) \ar[rr]^-{\circ T_0} && \Fun^\ast(\Fun^\omega(\CAlg^{\cn}_{R^\circ}, \SSet), \calX),
}
$$
where the horizontal maps are given by composition with $T_0$ and the vertical maps are given by composition with $f^\ast$, thereby completing the proof.
\end{proof}

\begin{pg}
	In order to extend the adjunction between the pair of adjoint functors $(\Theta^L, \Theta)$ of \ref{the forgetful functor is monadic and comonadic} to an adjunction between the $\infty$-category of derived Deligne-Mumford stacks over $R$ and the $\infty$-category of spectral Deligne-Mumford stacks over $R^\circ$, we establish some preliminaries.
\end{pg}

\begin{notation}\label{sheaves on infinity-topoi}
	Let $\iTop$ denote the $\infty$-category whose objects are $\infty$-topoi $\calX$ and whose morphisms are geometric morphisms $f_\ast: \calX \rightarrow \calY$ (that is, functors which admit a left exact left adjoint); see \cite[6.3.1.5]{MR2522659}. If $\calC$ is an $\infty$-category, we let $\iTop_{\calC}$ denote the $\infty$-category whose objects are pairs $(\calX, \calO_{\calX})$, where $\calX$ is an $\infty$-topos and $\calO_{\calX}$ is a $\calC$-valued sheaf on $\calX$, and whose morphism from $(\calX, \calO_{\calX})$ to $(\calY, \calO_{\calY})$ is a pair $(f_\ast, \alpha)$, where $f_\ast: \calX \rightarrow \calY$ is a geometric morphism of $\infty$-topoi and $\alpha: \calO_{\calY} \rightarrow \calO_{\calX} \circ f^\ast$ is a morphism in $\Shv_{\calC}(\calY)$; see \cite[21.4.1.1]{lurie2018sag}. Let $\sfX=(\calX, \calO)$ be an object of $\iTop_{\calC}$ and let $U \in \calX$ be an object. Let $\calO|_U$ denote the composite of $\calO$ with the forgetful functor $(\calX_{/U})^{\op} \rightarrow \calX^{\op}$, where $\calX_{/U}$ denotes the $\infty$-topos of \cite[6.3.5.1]{MR2522659}. We let $\sfX_U$ denote the object $(\calX_{/U}, \calO|_U) \in \iTop_{\calC}$. 
\end{notation}

\begin{pg}\label{comparison for derived ringed infinity-topoi}
	The pair of adjoint functors $\Adjoint{\Theta^L}{\CAlg^{\cn}_{R^\circ}}{\CAlg^\Delta_R}{\Theta}$ of \ref{the forgetful functor is monadic and comonadic} determines a pair of adjoint functors $\Adjoint{\Xi}{\iTop_{\CAlg^\Delta_R}}{\iTop_{\CAlg^{\cn}_{R^\circ}}}{\Xi^R}$. We note that the left adjoint $\Xi$ carries an object $(\calX, \calO: \calX^{\op} \rightarrow \CAlg^\Delta_R) \in \iTop_{\CAlg^\Delta_R}$ to the pair $(\calX, \Psi_{\calX}(\calO))$ and the right adjoint $\Xi^R$ carries an object $(\calX, \calO: \calX^{\op} \rightarrow \CAlg^{\cn}_{R^\circ}) \in \iTop_{\CAlg^{\cn}_{R^\circ}}$ to the pair $(\calX, \Psi^L_{\calX}(\calO))$; see \ref{comparison for sheaves on derived rings}. 
\end{pg}

\begin{notation}\label{dependence on the base}
	We will denote the functors $\Xi$ and $\Xi^R$ of \ref{comparison for derived ringed infinity-topoi} by $\Xi_R$ and $\Xi^R_R$, respectively, if we wish to emphasize the dependence on $R$. 
\end{notation}

\begin{remark}\label{conservative at the level of ringed infinity-topoi}
	The functor $\Xi: \iTop_{\CAlg^\Delta_R} \rightarrow \iTop_{\CAlg^{\cn}_{R^\circ}}$ is conservative by virtue of \ref{conservative at the level of sheaves}.
\end{remark}

\begin{pg}
	We define \'etale morphisms in $\iTop_{\CAlg^\Delta_R}$ as in the spectral setting of \cite[1.4.10.1]{lurie2018sag}: 
\end{pg}

\begin{definition}
	Let $f: (\calX, \calO_{\calX}) \rightarrow (\calY, \calO_{\calY})$ be a morphism in $\iTop_{\CAlg^\Delta_R}$. We will say that $f$ is \emph{\'etale} if the underlying geometric morphism of $\infty$-topoi $f_\ast: \calX \rightarrow \calY$ is \'etale in the sense of \cite[p.617]{MR2522659} and the induced map $f_\Delta^\ast \calO_{\calY} \rightarrow \calO_{\calX}$ is an equivalence in $\Shv_{\CAlg^\Delta_R}(\calX)$, where $f_\Delta^\ast: \Shv_{\CAlg^\Delta_R}(\calY) \rightarrow \Shv_{\CAlg^\Delta_R}(\calX)$ is the pullback functor of \ref{adjunction between sheaves on topoi taking values in a derived ring}. 
\end{definition}

\begin{remark}\label{preservation of etale morphisms}
	Let $\sfX=(\calX, \calO_{\calX}) \in \iTop_{\CAlg^\Delta_R}$ and $\sfY=(\calY, \calO_{\calY}) \in \iTop_{\CAlg^{\cn}_{R^\circ}}$ be objects. If $U \in \calX$ and $V \in \calY$ are objects, then we have canonical equivalences $(\Xi(\sfX))_U \simeq \Xi(\sfX_U)$ and $(\Xi^R(\sfY))_V \simeq \Xi^R(\sfY_V)$; see \ref{sheaves on infinity-topoi}. It follows from the description of $\Xi$ and $\Xi^R$ supplied by \ref{comparison for derived ringed infinity-topoi} that both the functors $\Xi$ and $\Xi^R$ preserve \'etale morphisms. 
\end{remark}

\begin{pg}
	We recall from \cite[1.4.2.1]{lurie2018sag} the notions of a \emph{local morphism} and a \emph{strictly Henselian object} in the $\infty$-category of sheaves on an $\infty$-topos taking values in $\CAlg^{\cn}_{R^\circ}$, and define the notions in the derived setting.
\end{pg}

\begin{pg}
	Let $\calX$ be an $\infty$-topos and $\calO \in \Shv_{\CAlg^{\cn}_{R^\circ}}(\calX)$ be an object. Let $\Sp$ denote the \emph{$\infty$-category of spectra} in the sense of \cite[1.4.3.1]{lurie2017ha}. Let $\Omega^\infty: \CAlg^{\cn}_{R^\circ} \rightarrow \SSet$ be the composition of the forgetful functor $\CAlg^{\cn}_{R^\circ} \rightarrow \Sp$ with the underlying space functor $\Sp \rightarrow \SSet$ of \cite[1.4.2.20]{lurie2017ha}. Since the functor $\Omega^\infty$ preserves small limits, $\Omega^\infty \circ \calO$ is an object of $\Shv_{\SSet}(\calX) \simeq \calX$. We let $\pi_0\calO \in \tau_{\leq 0}\calX$ denote the $0$-truncation of this object, which we will regard as an object of the ordinary topos $\tau_{\leq 0}\calX$. 
\end{pg}

\begin{pg}\label{strictly Henselian objects and local morphisms in the spectral setting}
	Let $\calX$ be an $\infty$-topos. According to \cite[1.4.2.1]{lurie2018sag}, an object $\calO \in \Shv_{\CAlg^{\cn}_{R^\circ}}(\calX)$ is said to be \emph{strictly Henselian} if $\pi_0\calO$ is strictly Henselian (when regarded as a commutative ring object of the ordinary topos $\tau_{\leq 0}\calX$) in the sense of \cite[1.2.2.5]{lurie2018sag}. A morphism $\calO' \rightarrow \calO$ in $\Shv_{\CAlg^{\cn}_{R^\circ}}(\calX)$ is said to be \emph{local} if the induced morphism $\pi_0\calO' \rightarrow \pi_0\calO$ of commutative ring objects of the ordinary topos $\tau_{\leq 0}\calX$ is local in the sense of \cite[1.2.1.4]{lurie2018sag}. We will denote by $\iTop_{\CAlg^{\cn}_{R^\circ}}^{\sHen}$ the subcategory of $\iTop_{\CAlg^{\cn}_{R^\circ}}$ whose objects are pairs $(\calX, \calO_{\calX})$ such that $\calO_{\calX}$ is strictly Henselian, and whose morphisms are maps $f: (\calX, \calO_{\calX}) \rightarrow (\calY, \calO_{\calY})$ for which the induced map $f_{\bbE_\infty}^\ast \calO_{\calY} \rightarrow \calO_{\calX}$ is local. 
\end{pg}

\begin{definition}\label{strictly Henselian objects and local morphisms in the derived setting}
	Let $\calX$ be an $\infty$-topos. Let $\calO \in \Shv_{\CAlg^\Delta_R}(\calX)$ be an object. We will say that $\calO$ is \emph{strictly Henselian} if $\Psi_{\calX}(\calO) \in \Shv_{\CAlg^{\cn}_{R^\circ}}(\calX)$ is strictly Henselian in the sense of \ref{strictly Henselian objects and local morphisms in the spectral setting} (see \ref{comparison for sheaves on derived rings}). We will say that a morphism $\calO' \rightarrow \calO$ in $\Shv_{\CAlg^\Delta_R}(\calX)$ is \emph{local} if the induced morphism $\Psi_{\calX}(\calO') \rightarrow \Psi_{\calX}(\calO)$ in $\Shv_{\CAlg^{\cn}_{R^\circ}}(\calX)$ is local in the sense of \ref{strictly Henselian objects and local morphisms in the spectral setting}. We will denote by $\iTop_{\CAlg^\Delta_R}^{\sHen}$ the subcategory of $\iTop_{\CAlg^\Delta_R}$ whose objects are pairs $(\calX, \calO_{\calX})$ for which $\calO_{\calX}$ is strictly Henselian, and whose morphisms are maps $f: (\calX, \calO_{\calX}) \rightarrow (\calY, \calO_{\calY})$ such that the induced map $f_\Delta^\ast \calO_{\calY} \rightarrow \calO_{\calX}$ is local.
\end{definition}

\begin{lemma}\label{restriction to the subcategory of strictly Henselian objects and local morphisms}
	The pair of functors $\Xi$ and $\Xi^R$ of \emph{\ref{comparison for derived ringed infinity-topoi}} restricts to a pair of adjoint functors $\adjoint{\iTop_{\CAlg^\Delta_R}^{\sHen}}{\iTop_{\CAlg^{\cn}_{R^\circ}}^{\sHen}}$ between the subcategories spanned by strictly Henselian objects and local morphisms between them.
\end{lemma}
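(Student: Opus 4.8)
The plan is to show that the adjunction $(\Xi, \Xi^R)$ of \ref{comparison for derived ringed infinity-topoi} restricts to the subcategories of strictly Henselian objects with local morphisms. This requires three things: that $\Xi$ carries $\iTop_{\CAlg^\Delta_R}^{\sHen}$ into $\iTop_{\CAlg^{\cn}_{R^\circ}}^{\sHen}$; that $\Xi^R$ carries $\iTop_{\CAlg^{\cn}_{R^\circ}}^{\sHen}$ into $\iTop_{\CAlg^\Delta_R}^{\sHen}$; and that the unit and counit of the adjunction respect these subcategories, so that the adjunction descends. Let me sketch each piece.

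**First I would** verify that $\Xi$ preserves strict Henselianness and local morphisms, which is essentially true by definition. By \ref{comparison for derived ringed infinity-topoi}, $\Xi$ sends $(\calX, \calO)$ with $\calO \colon \calX^{\op} \to \CAlg^\Delta_R$ to $(\calX, \Psi_{\calX}(\calO))$. The derived object $\calO$ is declared strictly Henselian in \ref{strictly Henselian objects and local morphisms in the derived setting} precisely when $\Psi_{\calX}(\calO)$ is strictly Henselian in the spectral sense, so $\Xi$ tautologically lands among strictly Henselian objects. For morphisms, the key input is \ref{left adjointable}: given a morphism $f$ in $\iTop_{\CAlg^\Delta_R}$ with underlying geometric morphism $f_\ast$, the left-adjointability established there gives a canonical equivalence $f_{\bbE_\infty}^\ast \circ \Psi_{\calY} \simeq \Psi_{\calX} \circ f_\Delta^\ast$. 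Applying $\Psi_{\calX}$ to the structure map $f_\Delta^\ast \calO_{\calY} \to \calO_{\calX}$ and using this equivalence identifies it with the spectral structure map $f_{\bbE_\infty}^\ast \Psi_{\calY}(\calO_{\calY}) \to \Psi_{\calX}(\calO_{\calX})$. Since locality of the derived morphism is \emph{defined} (in \ref{strictly Henselian objects and local morphisms in the derived setting}) as locality of this image under $\Psi_{\calX}$, the morphism $\Xi(f)$ is local exactly when $f$ is. Thus $\Xi$ restricts to a functor between the Henselian subcategories.

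**The harder direction** is $\Xi^R$, since here the comparison of $\pi_0$ is no longer tautological but goes through the analysis of the monad in Section~2. Given $(\calX, \calO)$ with $\calO \colon \calX^{\op} \to \CAlg^{\cn}_{R^\circ}$ strictly Henselian, $\Xi^R$ produces $(\calX, \Psi^L_{\calX}(\calO))$, and I must show $\Psi^L_{\calX}(\calO) = L_{\calX}(\Theta^L \circ \calO)$ is strictly Henselian, i.e.\ that $\Psi_{\calX}(\Psi^L_{\calX}(\calO))$ has strictly Henselian $\pi_0$. The crucial computation is \ref{connected components of the unit map}: the unit map $A \to \Theta(\Theta^L(A))$ induces an \emph{isomorphism} on $\pi_0$ for every connective $\bbE_\infty$-algebra $A$ over $R^\circ$. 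Sheafifying and passing to $0$-truncations, this shows that $\pi_0 \Psi_{\calX}(\Psi^L_{\calX}(\calO))$ and $\pi_0 \calO$ agree as commutative ring objects of $\tau_{\leq 0}\calX$; since strict Henselianness depends only on $\pi_0$, one is strictly Henselian iff the other is. The same $\pi_0$-isomorphism, applied to a morphism, shows that $\Xi^R$ preserves local morphisms. I expect the main technical point here to be checking that the unit-map isomorphism on $\pi_0$ is compatible with sheafification and $0$-truncation on an arbitrary $\infty$-topos, since $\pi_0$ and sheafification do not commute in general; I would handle this by working stalkwise or by using that $0$-truncation followed by sheafification is computed by the sheafification of the presheaf $0$-truncation, reducing to the pointwise statement of \ref{connected components of the unit map}.

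**Finally,** having shown both functors preserve the relevant subcategories, I would conclude that the adjunction restricts by invoking a standard criterion: if a pair of adjoint functors between two $\infty$-categories carries two given full-subcategory-like inclusions into one another, and the unit and counit components lie within the subcategories, then the adjunction restricts. Concretely, for $\sfX \in \iTop_{\CAlg^\Delta_R}^{\sHen}$ and $\sfY \in \iTop_{\CAlg^{\cn}_{R^\circ}}^{\sHen}$, I would check that the unit $\sfY \to \Xi(\Xi^R(\sfY))$ and counit $\Xi^R(\Xi(\sfX)) \to \sfX$ are morphisms in the Henselian subcategories, which again follows from the $\pi_0$-analysis above since their structure maps become isomorphisms (hence local) after applying $\Psi$. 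It then follows formally that $\Map_{\iTop_{\CAlg^{\cn}_{R^\circ}}^{\sHen}}(\Xi(\sfX), \sfY) \simeq \Map_{\iTop_{\CAlg^\Delta_R}^{\sHen}}(\sfX, \Xi^R(\sfY))$, establishing the adjunction on the subcategories. The one subtlety to watch is that these are subcategories, not full subcategories — the morphisms are constrained to be local — so I must verify the mapping-space equivalence respects the locality condition on both sides, which is exactly guaranteed by the morphism-preservation statements proved for $\Xi$ and $\Xi^R$.
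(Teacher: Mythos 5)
Your proposal is correct and takes essentially the same route as the paper: $\Xi$ restricts tautologically (using the left adjointability of \ref{left adjointable} to identify the structure maps), and $\Xi^R$ restricts because the unit map induces an isomorphism on $\pi_0$ by \ref{connected components of the unit map}, which is exactly the paper's two-step argument. Your extra formal verification that the adjunction itself descends is left implicit in the paper and is harmless, though note you swapped the unit and counit: since $\Xi$ is the left adjoint, the unit is $\sfX \rightarrow \Xi^R(\Xi(\sfX))$ and the counit is $\Xi(\Xi^R(\sfY)) \rightarrow \sfY$.
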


\begin{proof}
	According to \ref{left adjointable}, the functor $\Xi$ restricts to $\iTop_{\CAlg^\Delta_R}^{\sHen} \rightarrow \iTop_{\CAlg^{\cn}_{R^\circ}}^{\sHen}$. To prove that $\Xi^R$ restricts to $\iTop_{\CAlg^{\cn}_{R^\circ}}^{\sHen} \rightarrow \iTop_{\CAlg^\Delta_R}^{\sHen}$, it will suffice to show that if $\calX$ is an $\infty$-topos and $\calO \in \Shv_{\CAlg^{\cn}_{R^\circ}}(\calX)$ is an object, then $\pi_0 \Psi_{\calX}(\Psi^L_{\calX}(\calO))$ can be identified with $\pi_0 \calO$. This follows from \ref{connected components of the unit map}. 
\end{proof}

\begin{pg}\label{etale topology on derived rings}
	We define an \emph{\'etale spectrum} of a simplicial commutative $R$-algebra as in the spectral setting of \cite[1.4.2.5]{lurie2018sag}. Let $A$ be a simplicial commutative $R$-algebra and let $\CAlg^{\Delta, \et}_A \subseteq \CAlg^\Delta_A$ denote the full subcategory spanned by the \'etale $A$-algebras. We will regard $(\CAlg^{\Delta, \et}_A)^{\op}$ as equipped with the \emph{\'etale topology}: that is, if $B$ is an \'etale $A$-algebra, then a sieve $\calC \subseteq (\CAlg^{\Delta, \et}_A)^{\op}_{/B} \simeq (\CAlg^{\Delta, \et}_B)^{\op}$ is a covering if and only if it contains a finite collection of maps $\{B \rightarrow B_i\}_{1 \leq i \leq n}$ for which the induced map $B \rightarrow \prod B_i$ is faithfully flat. Note that the forgetful functor $\CAlg^{\Delta, \et}_A \rightarrow \CAlg^\Delta_R$ determines a $\CAlg^\Delta_R$-valued sheaf on the $\infty$-category $\Shv((\CAlg^{\Delta, \et}_A)^{\op})$ of sheaves on $(\CAlg^{\Delta, \et}_A)^{\op}$; see \cite[1.3.1.7]{lurie2018sag}.
Let us denote this sheaf by $\calO_A$. Then the pair $(\Shv((\CAlg^{\Delta, \et}_A)^{\op}), \calO_A) \in \iTop_{\CAlg^\Delta_R}$ belongs to the subcategory $\iTop_{\CAlg^\Delta_R}^{\sHen}$ by virtue of \cite[4.3.12]{MR2717174}.
\end{pg}

\begin{definition}\label{affine derived Deligne-Mumford stacks}
	Let $A$ be a simplicial commutative $R$-algebra. We will denote the object $(\Shv((\CAlg^{\Delta, \et}_A)^{\op}), \calO_A) \in \iTop_{\CAlg^\Delta_R}^{\sHen}$ of \ref{etale topology on derived rings} by $\Spec A$ and refer to it as the \emph{\'etale spectrum of $A$ over $R$}. 
\end{definition}

\begin{remark}\label{global sections functor is a left adjoint to Spec} 
	The global section functor $\Gamma: \iTop_{\CAlg^\Delta_R}^{\sHen} \rightarrow (\CAlg^\Delta_R)^{\op}$ which carries an object $(\calX, \calO)$ to $\calO(\ast)$, where $\ast \in \calX$ is a final object, admits a right adjoint. Concretely, it is given by the construction $A \mapsto \Spec A$; see \cite[4.1.16]{MR2717174} and \cite[4.3.12]{MR2717174}.
\end{remark}

\begin{pg}\label{the equivalence of etale sites}
	If $A$ is a connective $\bbE_\infty$-ring, we let $\CAlg^{\et}_A$ denote the full subcategory of $\CAlg^{\cn}_A$ spanned by the $\bbE_\infty$-rings which are \'etale over $A$. According to \cite[3.4.13]{MR2717174} and \cite[7.5.4.2]{lurie2017ha}, the truncation functors $\CAlg^{\Delta, \et}_R \rightarrow  \CAlg^{\et}_{\pi_0R} \leftarrow \CAlg^{\et}_{R^\circ}$ are equivalences of $\infty$-categories, so that the functor $\Theta: \CAlg^\Delta_R \rightarrow \CAlg^{\cn}_{R^\circ}$ of \ref{the forgetful functor for a simplicial commutative ring} restricts to an equivalence of $\infty$-categories $\CAlg^{\Delta, \et}_R \rightarrow \CAlg^{\et}_{R^\circ}$. In particular, we have an equivalence of $\infty$-topoi $\Shv((\CAlg^{\Delta, \et}_A)^{\op}) \simeq \Shv((\CAlg^{\et}_{A^\circ})^{\op})$, where we regard $(\CAlg^{\et}_{A^\circ})^{\op}$ as equipped with the \'etale topology of \cite[B.6.2.2]{lurie2018sag}.
\end{pg}

\begin{pg}
	As in the case of (affine) spectral Deligne-Mumford stacks (see \cite[1.4.4.2]{lurie2018sag} and \cite[1.4.7]{lurie2018sag}), we define (\emph{affine}) \emph{derived Deligne-Mumford stacks} as follows: 
\end{pg}

\begin{definition}
	Let $\sfX$ be an object of $\iTop_{\CAlg^\Delta_R}^{\sHen}$. We will say that $\sfX$ is an \emph{affine derived Deligne-Mumford stack over $R$} if there exists an equivalence $\sfX \simeq \Spec A$ for some simplicial commutative $R$-algebra $A$; see \ref{affine derived Deligne-Mumford stacks}. We will say that $\sfX$ is a \emph{derived Deligne-Mumford stack over $R$} if there exists a collection of objects $\{U_\alpha \}$ of $\calX$ for which the map $\coprod U_\alpha \rightarrow \ast$ is an effective epimorphism in $\calX$, where $\ast \in \calX$ is a final object, and each $\sfX_{U_\alpha}$ is an affine derived Deligne-Mumford stack over $R$; see \ref{sheaves on infinity-topoi}.
\end{definition}

\begin{notation}\label{the infinity-categories of derived and spectral Deligne-Mumford stacks}
	We let $\DerDM_R$ denote the full subcategory of $\iTop_{\CAlg^\Delta_R}^{\sHen}$ spanned by the derived Deligne-Mumford stacks over $R$, and we let $\SpDM_{R^\circ}$ denote the \emph{$\infty$-category of spectral Deligne-Mumford stacks over $R^\circ$} (see \cite[1.4.4.2]{lurie2018sag}). In the special case where $R=\mathbb{Z}$, we will denote $\DerDM_R$ simply by $\DerDM$ and refer to it as the \emph{$\infty$-category of derived Deligne-Mumford stacks}.
\end{notation}

\begin{remark}\label{undercategories}
	We have canonical equivalences of $\infty$-categories $\DerDM_R \simeq \DerDM_{/\Spec R}$ and $\SpDM_{R^\circ} \simeq (\SpDM_\mathbb{Z})_{/\Spec R^\circ}$. 
\end{remark}

\begin{pg}
	The following definition is a derived analogue of the notion of \emph{spectral Deligne-Mumford $n$-stacks} of \cite[1.6.8.1]{lurie2018sag}: 
\end{pg}

\begin{definition}\label{n-stacks}
	Let $n \geq 0$ be an integer. Let $\sfX$ be a derived Deligne-Mumford stack. We will say that $\sfX$ is a \emph{derived Deligne-Mumford $n$-stack} if the space $\Map_{\DerDM}(\Spec A, \sfX)$ is $n$-truncated for each ordinary commutative ring $A$. We will refer to derived Deligne-Mumford $0$-stacks as \emph{derived algebraic spaces}. 
\end{definition}

\begin{pg}
	We now show that the adjunction $\Adjoint{\Xi}{\iTop_{\CAlg^\Delta_R}^{\sHen}}{\iTop_{\CAlg^{\cn}_{R^\circ}}^{\sHen}}{\Xi^R}$ of \ref{restriction to the subcategory of strictly Henselian objects and local morphisms} restricts to the full subcategories $\DerDM_R$ and $\SpDM_{R^\circ}$. 
\end{pg}

\begin{lemma}\label{preservation of affine objects}
	Let $A$ be a simplicial commutative $R$-algebra and $B$ be a connective $\bbE_\infty$-ring over $R^\circ$. Then the images of $\Spec A \in \DerDM_R$ and $\Spec B \in \SpDM_{R^\circ}$ under the functors $\Xi: \iTop_{\CAlg^\Delta_R}^{\sHen} \rightarrow \iTop_{\CAlg^{\cn}_{R^\circ}}^{\sHen}$ and $\Xi^R: \iTop_{\CAlg^{\cn}_{R^\circ}}^{\sHen} \rightarrow \iTop_{\CAlg^\Delta_R}^{\sHen}$ of \emph{\ref{restriction to the subcategory of strictly Henselian objects and local morphisms}} are equivalent to the affine spectral and derived Deligne-Mumford stacks $\Spec \Theta(A)$ and $\Spec \Theta^L(B)$ over $R^\circ$ and $R$, respectively.
\end{lemma}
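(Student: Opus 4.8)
The plan is to deduce both assertions from a single comparison at the level of global sections, treating $\Xi(\Spec A)$ and $\Xi^R(\Spec B)$ by dual manipulations of the same adjunctions. Write $\Gamma_\Delta \colon \iTop^{\sHen}_{\CAlg^\Delta_R} \to (\CAlg^\Delta_R)^{\op}$ and $\Gamma_{\bbE_\infty} \colon \iTop^{\sHen}_{\CAlg^{\cn}_{R^\circ}} \to (\CAlg^{\cn}_{R^\circ})^{\op}$ for the global sections functors $(\calX, \calO) \mapsto \calO(\ast)$, which admit right adjoints $\Spec_\Delta$ and $\Spec_{\bbE_\infty}$ (see \ref{global sections functor is a left adjoint to Spec} and its spectral counterpart in \cite{lurie2018sag}). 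Since $\Xi$ restricts to $\iTop^{\sHen}$ and is given by postcomposition with $\Theta$ (see \ref{comparison for derived ringed infinity-topoi} and \ref{restriction to the subcategory of strictly Henselian objects and local morphisms}), evaluating a structure sheaf at a final object shows that $\Gamma_{\bbE_\infty}(\Xi(\calX, \calO)) = \Theta(\calO(\ast)) = \Theta^{\op}(\Gamma_\Delta(\calX, \calO))$; that is, there is a canonical equivalence of functors $\Gamma_{\bbE_\infty} \circ \Xi \simeq \Theta^{\op} \circ \Gamma_\Delta$.

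The assertion about $\Xi^R$ then follows by passing to right adjoints. The right adjoint of the composite $\Gamma_{\bbE_\infty} \circ \Xi$ is $\Xi^R \circ \Spec_{\bbE_\infty}$, while the right adjoint of $\Theta^{\op} \circ \Gamma_\Delta$ is $\Spec_\Delta \circ (\Theta^{\op})^R$. Because $\Theta^L$ is a left adjoint to $\Theta$ (see \ref{the forgetful functor is monadic and comonadic}), the induced functor $\Theta^{\op}$ on opposite categories has right adjoint $(\Theta^L)^{\op}$, so $(\Theta^{\op})^R \simeq (\Theta^L)^{\op}$. As equivalent functors have equivalent right adjoints, I obtain $\Xi^R \circ \Spec_{\bbE_\infty} \simeq \Spec_\Delta \circ (\Theta^L)^{\op}$, and evaluating at $B$ gives $\Xi^R(\Spec B) \simeq \Spec \Theta^L(B)$.

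For the assertion about $\Xi$, the same equivalence $\Gamma_{\bbE_\infty} \circ \Xi \simeq \Theta^{\op} \circ \Gamma_\Delta$ produces, via the adjunctions $(\Gamma_\Delta, \Spec_\Delta)$ and $(\Gamma_{\bbE_\infty}, \Spec_{\bbE_\infty})$, a canonical comparison map $\Xi \circ \Spec_\Delta \to \Spec_{\bbE_\infty} \circ \Theta^{\op}$, which on objects is the natural map $\Xi(\Spec A) \to \Spec \Theta(A)$. To check that it is an equivalence I would argue directly. By \ref{the equivalence of etale sites}, $\Theta$ restricts to an equivalence $\CAlg^{\Delta, \et}_A \to \CAlg^{\et}_{\Theta(A)}$, inducing an equivalence of the underlying $\infty$-topoi $\Shv((\CAlg^{\Delta, \et}_A)^{\op}) \simeq \Shv((\CAlg^{\et}_{\Theta(A)})^{\op})$. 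Writing $\calX$ for this common $\infty$-topos, the structure sheaf of $\Xi(\Spec A)$ is $\Psi_{\calX}(\calO_A) = \Theta \circ \calO_A$, where postcomposition preserves sheaves because $\Theta$ preserves small limits (see \ref{comparison for sheaves on derived rings} and \ref{the forgetful functor is monadic and comonadic}). Since $\calO_A$ is the sheaf determined by the forgetful functor $\CAlg^{\Delta, \et}_A \to \CAlg^\Delta_R$, the sheaf $\Theta \circ \calO_A$ is determined by $B \mapsto \Theta(B)$, which under the site equivalence $B \leftrightarrow \Theta(B)$ agrees with the forgetful functor $\CAlg^{\et}_{\Theta(A)} \to \CAlg^{\cn}_{R^\circ}$ defining $\calO_{\Theta(A)}$. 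This identifies $\Xi(\Spec A)$ with $\Spec \Theta(A)$.

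The hard part will be this last identification of structure sheaves under the equivalence of \'etale topoi: one must track the compatibility between the two forgetful functors and the restriction of $\Theta$ to \'etale algebras carefully enough to see that $\Theta \circ \calO_A$ really is $\calO_{\Theta(A)}$. By contrast, the naive direct approach to $\Xi^R(\Spec B)$ would be far more painful, since it would force one to analyze $\Theta^L$ and the sheafification $\Psi^L_{\calX}$ on \'etale algebras (with the role of \ref{connected components of the unit map} entering through $\pi_0 B \simeq \pi_0 \Theta(\Theta^L B)$); routing that case through the global sections adjunction is precisely what lets us avoid this.
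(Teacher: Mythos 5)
Your proposal is correct and takes essentially the same route as the paper: the identification $\Xi(\Spec A) \simeq \Spec \Theta(A)$ via the description of $\Xi$ as postcomposition with $\Theta$ together with the \'etale-site equivalence of \ref{the equivalence of etale sites}, and the identification $\Xi^R(\Spec B) \simeq \Spec \Theta^L(B)$ via the equivalence $\Gamma_{\bbE_\infty} \circ \Xi \simeq \Theta^{\op} \circ \Gamma_\Delta$ combined with the global-sections adjunctions. The only cosmetic difference is that you deduce the second assertion by uniqueness of right adjoints, whereas the paper runs the equivalent computation on mapping spaces.
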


\begin{proof}
	The first assertion follows immediately from the description of $\Xi$ (see \ref{comparison for derived ringed infinity-topoi}) and the equivalence of $\infty$-categories $\CAlg^{\Delta, \et}_R \rightarrow \CAlg^{\et}_{R^\circ}$ of \ref{the equivalence of etale sites}. To prove the second, let $\sfY' \in \iTop_{\CAlg^{\cn}_{R^\circ}}^{\sHen}$ be an object. Let $\Gamma: \iTop_{\CAlg^{\cn}_{R^\circ}}^{\sHen} \rightarrow (\CAlg^{\cn}_{R^\circ})^{\op}$ denote the global sections functor of \cite[1.4.2.3]{lurie2018sag}, so that we have an adjunction $\Adjoint{\Spec}{(\CAlg^{\cn}_{R^\circ})^{\op}}{\iTop_{\CAlg^{\cn}_{R^\circ}}^{\sHen}}{\Gamma}$. To avoid confusion, let us temporarily denote by $\Gamma^\Delta$ the global sections functor $\Gamma: \iTop_{\CAlg^\Delta_R}^{\sHen} \rightarrow (\CAlg^\Delta_R)^{\op}$ of \ref{global sections functor is a left adjoint to Spec}. It follows from the description of $\Xi$ supplied by \ref{comparison for derived ringed infinity-topoi} that there is a canonical equivalence of functors $\Gamma \circ \Xi \simeq \Theta \circ \Gamma^\Delta$ from $\iTop_{\CAlg^\Delta_R}^{\sHen}$ to $(\CAlg^{\cn}_{R^\circ})^{\op}$. Using this observation, we can identify the mapping space $\Map_{\iTop_{\CAlg^{\cn}_{R^\circ}}^{\sHen}}(\Xi(\sfY'), \Spec B)$ with $\Map_{\iTop_{\CAlg^\Delta_R}^{\sHen}}(\sfY', \Spec \Theta^L(B))$, so that $\Xi^R(\Spec B) \simeq \Spec \Theta^L(B)$ as desired.
\end{proof}

\begin{proposition}\label{associated spectral and derived Deligne-Mumford stacks}
	Let $\sfX$ and $\sfY$ be derived and spectral Deligne-Mumford stacks over $R$ and $R^\circ$, respectively. Then the images of $\sfX$ and $\sfY$ under the functors $\Xi: \iTop_{\CAlg^\Delta_R}^{\sHen} \rightarrow \iTop_{\CAlg^{\cn}_{R^\circ}}^{\sHen}$ and $\Xi^R: \iTop_{\CAlg^{\cn}_{R^\circ}}^{\sHen} \rightarrow \iTop_{\CAlg^\Delta_R}^{\sHen}$ of \emph{\ref{restriction to the subcategory of strictly Henselian objects and local morphisms}} are spectral and derived Deligne-Mumford stacks over $R^\circ$ and $R$, respectively. 
\end{proposition}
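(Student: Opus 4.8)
The plan is to reduce the global statement about Deligne–Mumford stacks to the already-established affine case of Lemma \ref{preservation of affine objects}, using that both functors preserve étale morphisms and effective epimorphisms. By Remark \ref{conservative at the level of ringed infinity-topoi} the functor $\Xi$ does not change the underlying $\infty$-topos, and the same holds for $\Xi^R$; this is the feature that makes the reduction work, since the notions of derived and spectral Deligne–Mumford stack are both defined by the existence of an étale affine cover indexed by objects of the same $\infty$-topos $\calX$.

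First I would treat $\Xi$. Let $\sfX = (\calX, \calO_{\calX})$ be a derived Deligne–Mumford stack over $R$, and choose a collection $\{U_\alpha\}$ of objects of $\calX$ with $\coprod U_\alpha \to \ast$ an effective epimorphism and each $\sfX_{U_\alpha} \simeq \Spec A_\alpha$ affine. Applying Remark \ref{preservation of etale morphisms}, the canonical equivalences $(\Xi(\sfX))_{U_\alpha} \simeq \Xi(\sfX_{U_\alpha})$ identify the restriction of $\Xi(\sfX)$ to $U_\alpha$ with $\Xi(\Spec A_\alpha)$, which by Lemma \ref{preservation of affine objects} is equivalent to $\Spec \Theta(A_\alpha)$, an affine spectral Deligne–Mumford stack over $R^\circ$. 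Since $\Xi$ leaves the underlying $\infty$-topos $\calX$ unchanged, the same family $\{U_\alpha\}$ furnishes an effective epimorphism $\coprod U_\alpha \to \ast$ in $\calX$, and the local model $\Spec \Theta(A_\alpha)$ exhibits $\Xi(\sfX)$ as a spectral Deligne–Mumford stack over $R^\circ$ in the sense of \cite[1.4.4.2]{lurie2018sag}. The argument for $\Xi^R$ is formally identical: starting from a spectral Deligne–Mumford stack $\sfY = (\calY, \calO_{\calY})$ over $R^\circ$ with affine étale cover $\{V_\beta\}$ and $\sfY_{V_\beta} \simeq \Spec B_\beta$, the equivalences $(\Xi^R(\sfY))_{V_\beta} \simeq \Xi^R(\sfY_{V_\beta}) \simeq \Spec \Theta^L(B_\beta)$ of Remark \ref{preservation of etale morphisms} and Lemma \ref{preservation of affine objects} exhibit $\Xi^R(\sfY)$ as a derived Deligne–Mumford stack over $R$ with respect to the same cover of $\calY$.

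The one point requiring care is that the family $\{U_\alpha\}$, which covers $\sfX$, continues to cover $\Xi(\sfX)$ after applying the functor; but since the effective epimorphism $\coprod U_\alpha \to \ast$ is a condition purely on the $\infty$-topos $\calX$, and $\Xi$ acts as the identity on $\infty$-topoi (modifying only the structure sheaf via $\Psi_{\calX}$), this is automatic. I therefore expect no genuine obstacle here—the substance of the proposition is entirely contained in the affine comparison of Lemma \ref{preservation of affine objects} together with the compatibility with localization recorded in Remark \ref{preservation of etale morphisms}, and the proof is a short assembly of these two inputs rather than a new computation.
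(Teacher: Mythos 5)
Your proposal is correct and follows essentially the same route as the paper's own proof: restrict along the given \'etale affine cover using the equivalences of \ref{preservation of etale morphisms}, then invoke the affine comparison of \ref{preservation of affine objects}, noting that the underlying $\infty$-topos is unchanged so the same effective epimorphism witnesses the cover condition. The only cosmetic difference is that the paper spells out the case of $\Xi^R$ and declares the case of $\Xi$ similar, while you do the reverse.
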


\begin{proof}
	Let $\sfY=(\calY, \calO_{\calY})$. We will prove that $\Xi^R(\sfY)$ is a derived Deligne-Mumford stack over $R$; the proof that $\Xi(\sfX) \in \SpDM_{R^\circ}$ is similar. Let $\ast \in \calY$ denote a final object, and choose an effective epimorphism $\coprod U_\alpha \rightarrow \ast$ in $\calY$ for which $\sfY_{U_\alpha}$ is an affine spectral Deligne-Mumford stack over $R^\circ$ for every $\alpha$ (see \ref{sheaves on infinity-topoi}). Since we have an equivalence $\Xi^R(\sfY_{U_\alpha}) \simeq (\Xi^R(\sfY))_{U_\alpha}$ for each $\alpha$ by virtue of \ref{preservation of etale morphisms}, the desired result follows immediately from \ref{preservation of affine objects}. 
\end{proof}

\begin{remark}
	The proof of \ref{associated spectral and derived Deligne-Mumford stacks} guarantees that if $\sfY$ is a spectral Deligne-Mumford stack over $R^\circ$ and $\{ \Spec B_\alpha \rightarrow \sfY\}$ is a collection of \'etale morphisms for which the induced map $\coprod \Spec B_\alpha \rightarrow \sfY$ is surjective (see \cite[3.5.5.5]{lurie2018sag}), then $\Xi^R(\sfY)$ is a colimit of the diagram $\{ \Spec \Theta^L(B_\alpha) \}$ in $\DerDM_R$. 
\end{remark}

\begin{remark}\label{the adjunction for derived and spectral Deligne-Mumford stacks}
	According to \ref{associated spectral and derived Deligne-Mumford stacks}, the adjunction $\Adjoint{\Xi}{\iTop_{\CAlg^\Delta_R}^{\sHen}}{\iTop_{\CAlg^{\cn}_{R^\circ}}^{\sHen}}{\Xi^R}$ of \ref{restriction to the subcategory of strictly Henselian objects and local morphisms} restricts to adjoint functors $\adjoint{\DerDM_R}{\SpDM_{R^\circ}}$.
\end{remark}

\begin{definition}\label{underlying spectral Deligne-Mumford stacks}
	Let $\sfX$ be a derived Deligne-Mumford stack over $R$. We will refer to the spectral Deligne-Mumford stack $\Xi(\sfX)$ as the \emph{underlying spectral Deligne-Mumford stack of $\sfX$ over $R^\circ$}.
\end{definition}

\begin{remark}\label{description using the ring of integers}
	Under the equivalences of \ref{undercategories}, we can identify the functor $\Xi: \DerDM_R \rightarrow \SpDM_{R^\circ}$ with the functor induced by $\Xi_\mathbb{Z}: \DerDM \rightarrow \SpDM_\mathbb{Z}$ (see \ref{dependence on the base}). Moreover, if $\sfX$ is a spectral Deligne-Mumford stack over $R^\circ$, then the canonical map $\Xi^R(\sfX) \rightarrow \Xi^R_\mathbb{Z}(\sfX) \times_{\Spec \Theta^L_\mathbb{Z}(\Theta_\mathbb{Z}(R))} \Spec R$ is an equivalence in $\DerDM_R$. 
\end{remark}

\begin{pg}
	Let $n \geq 0$ be an integer. According to \cite[7.1.3.14]{lurie2017ha}, a connective $\bbE_\infty$-algebra $A$ over $R^\circ$ is $n$-truncated as an object of $\CAlg^{\cn}_{R^\circ}$ (in the sense of \cite[5.5.6.1]{MR2522659}) if and only if $\pi_mA \simeq 0$ for every $m>n$ (see \cite[p.1200]{lurie2017ha}). Arguing as in the proof of \cite[7.1.3.14]{lurie2017ha}, we see that a simplicial commutative $R$-algebra $A$ is $n$-truncated as an object of $\CAlg^\Delta_R$ if and only if $\pi_m A \simeq 0$ for every $m>n$; here $\pi_mA$ denotes the $m$th homotopy group of the underlying $\bbE_\infty$-algebra $A^\circ$ over $R^\circ$. We let $\tau_{\leq n}\CAlg^\Delta_R \subseteq \CAlg^\Delta_R$ and $\tau_{\leq n}\CAlg^{\cn}_{R^\circ} \subseteq \CAlg^{\cn}_{R^\circ}$ denote the full subcategories spanned by the $n$-truncated objects. 
\end{pg}

\begin{pg}\label{equivalence of 0-truncations of derived rings}
	Let $n \geq 0$ be an integer. Then the functor $\Theta: \CAlg^\Delta_R \rightarrow \CAlg^{\cn}_{R^\circ}$ restricts to a functor $\Theta_{\tau_{\leq n}}: \tau_{\leq n}\CAlg^\Delta_R \rightarrow \tau_{\leq n}\CAlg^{\cn}_{R^\circ}$. In the special case where $n=0$, the functor $\Theta_{\tau_{\leq n}}$ is an equivalence of $\infty$-categories, because we can identify both the $\infty$-categories $\tau_{\leq 0}\CAlg^\Delta_R$ and $\tau_{\leq 0}\CAlg^{\cn}_{R^\circ}$ with the (nerve of the) ordinary category of commutative algebras over $\pi_0R$. 
\end{pg}

\begin{remark}\label{equivalence of 0-truncations}
	Using \ref{equivalence of 0-truncations of derived rings}, we can identify the \emph{underlying Deligne-Mumford stack} $(\tau_{\leq 0} \calX, \pi_0 \calO)$ of a derived Deligne-Mumford stack $\sfX=(\calX, \calO)$ over $R$ with the \emph{underlying Deligne-Mumford stack} of the spectral Deligne-Mumford stack $\Xi(\sfX)$ over $R^\circ$; see \cite[1.4.8.2]{lurie2018sag}. 
\end{remark}

\begin{definition}\label{truncated stacks}
	Let $n \geq 0$ be an integer. According to \cite[1.4.6.1]{lurie2018sag}, a spectral Deligne-Mumford stack $\sfX=(\calX, \calO_{\calX})$ over $R^\circ$ is said to be \emph{$n$-truncated} if $\calO_{\calX}$ is $n$-truncated in the sense of \cite[1.3.5.5]{lurie2018sag}. We will say that a derived Deligne-Mumford stack over $R$ is \emph{$n$-truncated} if the underlying spectral Deligne-Mumford stack over $R^\circ$ in the sense of \ref{underlying spectral Deligne-Mumford stacks} is $n$-truncated. 
\end{definition}

\begin{remark}\label{equivalence between 0-truncated DerDM and SpDM}
	It follows from \ref{equivalence of 0-truncations of derived rings} that there is an equivalence of $\infty$-categories between the full subcategory of $\DerDM_R$ spanned by those derived Deligne-Mumford stacks which are $0$-truncated and the full subcategory of $\SpDM_{R^\circ}$ spanned by those spectral Deligne-Mumford stacks which are $0$-truncated.
\end{remark}

\begin{pg}
	We now establish some basic properties preserved by the functor $\Xi$. First, we note that the $\infty$-categories $\DerDM_R$ and $\SpDM_{R^\circ}$ admit finite limits (see \cite[1.4.11.1]{lurie2018sag}). 
\end{pg}

\begin{lemma}\label{preservation of fiber products} 
	The functor $\Xi: \DerDM_R \rightarrow \SpDM_{R^\circ}$ preserves finite limits. 
\end{lemma}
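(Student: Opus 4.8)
The plan is to reduce the statement to the preservation of the terminal object and of pullbacks, to settle the pullback case for affine stacks by hand using that $\Theta$ preserves pushouts, and then to pass to general pullbacks by working \'etale-locally. Recall that a functor between $\infty$-categories admitting finite limits preserves finite limits if and only if it preserves the terminal object and pullbacks (see \cite[4.4.2.4]{MR2522659}); since $\DerDM_R$ and $\SpDM_{R^\circ}$ admit finite limits, it therefore suffices to verify these two cases for $\Xi$. For the terminal object, note that under the equivalence $\DerDM_R \simeq \DerDM_{/\Spec R}$ of \ref{undercategories} the object $\Spec R$ is terminal in $\DerDM_R$, and likewise $\Spec R^\circ$ is terminal in $\SpDM_{R^\circ}$. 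The identity $\Theta(R) \simeq R^\circ$ together with \ref{preservation of affine objects} then gives $\Xi(\Spec R) \simeq \Spec\Theta(R) \simeq \Spec R^\circ$, so $\Xi$ preserves the terminal object.

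Next I would dispose of the case in which all three stacks are affine. Suppose $\sfX = \Spec B$, $\sfY = \Spec A$, and $\sfZ = \Spec C$ with $A,B,C \in \CAlg^\Delta_R$, together with maps $\sfX \to \sfZ \leftarrow \sfY$. Since $\Spec$ is right adjoint to the global sections functor (see \ref{global sections functor is a left adjoint to Spec}), it carries the pushout $B\otimes_C A$ in $\CAlg^\Delta_R$ to the pullback, so $\sfX\times_{\sfZ}\sfY \simeq \Spec(B\otimes_C A)$. By \ref{the forgetful functor is monadic and comonadic} the functor $\Theta$ admits a right adjoint, hence preserves this pushout, giving $\Theta(B\otimes_C A) \simeq \Theta(B)\otimes_{\Theta(C)}\Theta(A)$ in $\CAlg^{\cn}_{R^\circ}$. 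Combining this with \ref{preservation of affine objects} yields
\[
\Xi(\sfX\times_{\sfZ}\sfY) \simeq \Spec\bigl(\Theta(B)\otimes_{\Theta(C)}\Theta(A)\bigr) \simeq \Spec\Theta(B)\times_{\Spec\Theta(C)}\Spec\Theta(A) \simeq \Xi(\sfX)\times_{\Xi(\sfZ)}\Xi(\sfY),
\]
which is the desired equivalence in the affine case.

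For a general pullback $\sfX\to\sfZ\leftarrow\sfY$ I would consider the canonical comparison map $\Xi(\sfX\times_{\sfZ}\sfY) \to \Xi(\sfX)\times_{\Xi(\sfZ)}\Xi(\sfY)$ and check that it is an equivalence \'etale-locally. Choosing an \'etale affine cover $\{\Spec C_i \to \sfZ\}$ and, over each chart, \'etale affine covers of $\sfX\times_\sfZ\Spec C_i$ and $\sfY\times_\sfZ\Spec C_i$, one presents $\sfX\times_\sfZ\sfY$ as the geometric realization of the \v{C}ech nerve of an \'etale affine cover whose terms are affine fiber products of the form $\Spec(B\otimes_{C_i}A)$. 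Now $\Xi$ preserves \'etale morphisms (\ref{preservation of etale morphisms}) and, being a left adjoint (\ref{the adjunction for derived and spectral Deligne-Mumford stacks}), preserves colimits and in particular effective epimorphisms; hence it carries this presentation to the analogous \v{C}ech presentation of $\Xi(\sfX)\times_{\Xi(\sfZ)}\Xi(\sfY)$, each affine term being identified via the affine case above. Since equivalences of spectral Deligne--Mumford stacks may be tested on such an \'etale cover, the comparison map is an equivalence.

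The main obstacle is precisely this last step: making rigorous that $\Xi$ transports a chosen \'etale-local (\v{C}ech) presentation of $\sfX\times_\sfZ\sfY$ to one of $\Xi(\sfX)\times_{\Xi(\sfZ)}\Xi(\sfY)$, so that the formation of fiber products is compatible with $\Xi$ under the gluing. This requires coordinating the affine computation, the preservation of \'etale covers, and the colimit-preservation of $\Xi$, and in particular checking that $\Xi$ of the affine charts (and of their overlaps, which are again affine fiber products) constitutes an \'etale cover of the target fiber product. This bookkeeping, rather than any single conceptual difficulty, is where the substance of the argument lies.
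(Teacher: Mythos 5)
Your reduction to the terminal object plus pullbacks, and your treatment of the affine case, are correct and agree with the paper's proof: $\Xi(\Spec R) \simeq \Spec R^\circ$, and for affine $\sfX, \sfY, \sfZ$ the comparison map is induced by the equivalence $\Theta(B)\otimes_{\Theta(C)}\Theta(A) \simeq \Theta(B\otimes_C A)$, which holds because $\Theta$ preserves small colimits (\ref{the forgetful functor is monadic and comonadic}). The gap is in your globalization step, and it is genuine. First, the terms of the \v{C}ech nerve of your cover $\coprod \Spec(B_j\otimes_{C_i}A_k) \rightarrow \sfX\times_\sfZ\sfY$ are \emph{not} ``again affine fiber products'': the $n$-fold overlaps are fiber products over the stack $\sfX\times_\sfZ\sfY$ itself, and these are affine only when that stack has affine diagonal, which is not among your hypotheses (a $2$-fold overlap unwinds to a fiber product of stacks such as $\Spec B_j \times_{\sfX} \Spec B_{j'}$, which are \'etale over affines but not affine in general). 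Second, and more seriously, even granting that $\Xi$ preserves the geometric realization of the nerve (it does, being a left adjoint, provided the colimit exists), you must still identify the image under $\Xi$ of each overlap with the corresponding overlap of the induced cover of $\Xi(\sfX)\times_{\Xi(\sfZ)}\Xi(\sfY)$. Since those overlaps are themselves fiber products of non-affine derived Deligne-Mumford stacks, this identification is an instance of the very statement you are proving: as written, the argument is circular. Your closing paragraph concedes that this ``bookkeeping'' is where the substance lies --- indeed it is, and it is not mere bookkeeping.

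The paper closes this gap without ever forming a \v{C}ech nerve of stacks. Writing the diagram as $\sfY \rightarrow \sfX \leftarrow \sfZ$ with $\calX$ the underlying $\infty$-topos of the base $\sfX$, it calls an object $U \in \calX$ \emph{good} if the comparison map for $\sfY_{f^\ast U} \times_{\sfX_U} \sfZ_{g^\ast U}$ is an equivalence, and shows that good objects are closed under small colimits in $\calX$. The inputs are: colimits of diagrams of Deligne-Mumford stacks with \'etale transition maps exist (\cite[21.4.6.4]{lurie2018sag}); fiber products commute with such colimits because colimits are universal in the \'etale topos (\cite[6.1.0.6]{MR2522659}); and $\Xi$ preserves these colimits (\ref{the adjunction for derived and spectral Deligne-Mumford stacks}, \ref{preservation of etale morphisms}). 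Since the affine objects of $\calX$ generate it under colimits (the derived analogue of \cite[1.4.7.9]{lurie2018sag}), this reduces the claim to the case where $\sfX$ is affine; running the same colimit argument again in the variables $\sfY$ and $\sfZ$ then reduces to the all-affine case, which you handled correctly. The point of this one-variable-at-a-time structure is that $\Xi$ is only ever asked to preserve colimits along \'etale maps --- never a fiber product that is not already known to be preserved --- so the circularity never arises. To salvage your approach you would need to implement essentially this bootstrapping.
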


\begin{proof}
	Since $\Xi$ carries $\Spec R$ to $\Spec R^\circ$ by virtue of \ref{preservation of affine objects}, it will suffice to show that the functor $\Xi$ preserves fiber products. Suppose we are given a diagram $\sfY \stackrel{f}{\rightarrow} \sfX \stackrel{g}{\leftarrow} \sfZ$ of derived Deligne-Mumford stacks over $R$. Set $\sfX=(\calX, \calO_{\calX})$. Let us say that an object $U \in \calX$ is \emph{good} if the canonical map $\delta_U:  \Xi(\sfY_{f^\ast U} \times_{\sfX_U} \sfZ_{g^\ast U})\rightarrow \Xi(\sfY_{f^\ast U}) \times_{\Xi(\sfX_U)} \Xi(\sfZ_{g^\ast U})$ is an equivalence of spectral Deligne-Mumford stacks over $R^\circ$; see \ref{sheaves on infinity-topoi}. We first show that the collection of good objects of $\calX$ is closed under small colimits. To this end, suppose we are given a diagram of good objects $\{U_\alpha\}$ having a colimit $U \in \calX$; we wish to show that $U$ is good. We claim that $\sfY_{f^\ast U} \times_{\sfX_U} \sfZ_{g^\ast U}$ is a colimit of the diagram $\{ \sfY_{f^\ast U_\alpha} \times_{\sfX_{U_\alpha}} \sfZ_{g^\ast U_\alpha} \}$ in $\DerDM_R$. Since the transition maps $\sfY_{f^\ast U_\alpha} \times_{\sfX_{U_\alpha}} \sfZ_{g^\ast U_\alpha} \rightarrow \sfY_{f^\ast U_\beta} \times_{\sfX_{U_\beta}} \sfZ_{g^\ast U_\beta}$ are \'etale, the diagram has a colimit in the $\infty$-category $\DerDM_R$ (see \cite[21.4.6.4]{lurie2018sag}). Using this observation, the desired result follows from the fact that colimits are universal in the $\infty$-category of \'etale sheaves on $(\CAlg^\Delta_R)^{\op}$; see \cite[2.17]{MR4598184} and \cite[6.1.0.6]{MR2522659}. Similarly, using \ref{preservation of etale morphisms}, we see that $\Xi(\sfY_{f^\ast U}) \times_{\Xi(\sfX_U)} \Xi(\sfZ_{g^\ast U})$ is a colimit of the diagram $\{ \Xi(\sfY_{f^\ast U_\alpha}) \times_{\Xi(\sfX_{U_\alpha})} \Xi(\sfZ_{g^\ast U_\alpha}) \}$ in $\SpDM_{R^\circ}$. Combining these observations with the fact that the functor $\Xi$ preserves a colimit of a diagram with \'etale transition maps in the $\infty$-category $\DerDM_R$ (see \ref{the adjunction for derived and spectral Deligne-Mumford stacks}), we have equivalences
\begin{eqnarray*}
\Xi(\sfY_{f^\ast U} \times_{\sfX_U} \sfZ_{g^\ast U}) 
& \simeq & \Xi(\colim (\sfY_{f^\ast U_\alpha} \times_{\sfX_{U_\alpha}} \sfZ_{g^\ast U_\alpha})) \\
& \simeq & \colim \Xi(\sfY_{f^\ast U_\alpha} \times_{\sfX_{U_\alpha}} \sfZ_{g^\ast U_\alpha}) \\
& \simeq & \colim \Xi(\sfY_{f^\ast U_\alpha}) \times_{\Xi(\sfX_{U_\alpha})} \Xi(\sfZ_{g^\ast U_\alpha}) \\
& \simeq &  \Xi(\sfY_{f^\ast U}) \times_{\Xi(\sfX_U)} \Xi(\sfZ_{g^\ast U});
\end{eqnarray*}
here the third equivalence follows from the assumption that each $U_\alpha$ is good. This completes the proof that $U$ is good. According to the derived analogue of \cite[1.4.7.9]{lurie2018sag}, it will suffice to show that every affine object $U$ of $\calX$ (that is, an object $U \in \calX$ for which $\sfX_U$ is an affine derived Deligne-Mumford stack over $R$) is good. Replacing $\sfX$ by $\sfX_U$, we may assume that $\sfX \simeq \Spec A$ is affine. Arguing as above, we can reduce to the case where $\sfY \simeq \Spec B$ and $\sfZ \simeq \Spec C$ are also affine. Using \ref{preservation of affine objects}, it suffices to show that the canonical map $\Theta(B)\otimes_{\Theta(A)}\Theta(C) \rightarrow \Theta(B \otimes_A C)$ is an equivalence in $\CAlg^{\cn}_{R^\circ}$, which follows from the fact that $\Theta$ preserves small colimits (see \ref{the forgetful functor is monadic and comonadic}).
\end{proof}

\begin{pg}
	In what follows, we consider properties of morphisms of derived and spectral Deligne-Mumford stacks. We refer to \cite{lurie2018sag} for those properties in the spectral setting, which can be defined similarly in the derived setting. 
\end{pg}

\begin{lemma}\label{preservation of properties}
	Let $f: \sfX \rightarrow \sfY$ be a morphism between derived Deligne-Mumford stacks over $R$. Let $n \geq 0$ be an integer. Then: 
\begin{itemize}
\item[$(1)$] The morphism $f$ is \'etale if and only if $\Xi(f)$ is \'etale (see \emph{\cite[1.4.10.1]{lurie2018sag}}).
\item[$(2)$] The morphism $f$ is quasi-compact if and only if $\Xi(f)$ is quasi-compact (see \emph{\cite[2.3.2.2]{lurie2018sag}}).
\item[$(3)$] The morphism $f$ is a closed immersion if and only if $\Xi(f)$ is a closed immersion (see \emph{\cite[3.1.0.1]{lurie2018sag}}).
\item[$(4)$] The morphism $f$ is quasi-separated if and only if $\Xi(f)$ is quasi-separated (see \emph{\cite[3.4.0.1]{lurie2018sag}}). 
\item[$(5)$] The morphism $f$ is separated if and only if $\Xi(f)$ is separated (see \emph{\cite[3.2.0.1]{lurie2018sag}}). 
\item[$(6)$] The morphism $f$ is locally of finite type if and only if $\Xi(f)$ is locally of finite type (see \emph{\cite[4.2.0.1]{lurie2018sag}}). 
\item[$(7)$] The morphism $f$ is a relative derived Deligne-Mumford $n$-stack if and only if $\Xi(f)$ is a relative spectral Deligne-Mumford $n$-stack (see \emph{\cite[6.3.1.11]{lurie2018sag}}).
\item[$(8)$] The morphism $f$ is proper if and only if $\Xi(f)$ is proper (see \emph{\cite[5.1.2.1]{lurie2018sag}}).
\item[$(9)$] The morphism $f$ is locally almost of finite presentation if and only if $\Xi(f)$ is locally almost of finite presentation (see \emph{\cite[4.2.0.1]{lurie2018sag}}).
\end{itemize}
\end{lemma}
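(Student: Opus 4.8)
The plan is to exploit the fact that $\Xi$ alters only the structure sheaf, leaving the underlying $\infty$-topos untouched: by \ref{comparison for derived ringed infinity-topoi} it sends $(\calX, \calO)$ to $(\calX, \Psi_{\calX}(\calO))$, where $\Psi_{\calX}$ is postcomposition with $\Theta$. Moreover $\Theta$ does not change homotopy groups---for any simplicial commutative $R$-algebra $A$ the groups $\pi_n A$ are by definition those of $\Theta(A)$---so in particular the underlying ordinary Deligne-Mumford stacks agree (see \ref{equivalence of 0-truncations}). I would combine these two observations with the facts already in hand: $\Xi$ is conservative (\ref{conservative at the level of ringed infinity-topoi}), preserves \'etale morphisms (\ref{preservation of etale morphisms}), carries affine objects to affine objects (\ref{preservation of affine objects}), and preserves fiber products (\ref{preservation of fiber products}). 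Since every property listed is local for the \'etale topology on both source and target, the general strategy is to pass to \'etale affine charts $\Spec B \rightarrow \sfX$ and $\Spec A \rightarrow \sfY$ and reduce each statement to the morphism $\Spec \Theta(B) \rightarrow \Spec \Theta(A)$; preservation of \'etale maps and of affineness legitimizes the reduction, while conservativity of $\Xi$ supplies the reverse implications.

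With this setup, $(1)$--$(8)$ reduce to unwinding definitions. Item $(1)$ is \ref{preservation of etale morphisms} together with conservativity. Item $(3)$ is a condition on the underlying closed immersion of $\infty$-topoi and on the surjectivity of the induced map of $0$-truncated structure sheaves, both left fixed by $\Xi$. Item $(2)$ is likewise controlled by the underlying geometric morphisms, now combined with preservation of fiber products (\ref{preservation of fiber products}) for the base changes appearing in its definition. Items $(4)$, $(5)$, and $(7)$ then follow from a relative-diagonal argument: $\Xi$ commutes with fiber products, so it preserves and reflects the quasi-compactness $(4)$ or closed-immersion $(5)$ condition on the diagonal, and an induction on $n$ using iterated diagonals settles $(7)$ once the base case is matched with an already-treated property. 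Item $(6)$ becomes, on affine charts, the statement that $\pi_0\Theta(B)$ is finitely generated over $\pi_0\Theta(A)$, which is verbatim the condition for $B$ over $A$. Finally $(8)$ follows by writing properness as quasi-compact, separated, and universally closed: the first two are $(2)$ and $(5)$, and universal closedness transfers since $\Xi$ commutes with base change and leaves the underlying topoi unchanged.

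The main obstacle is $(9)$, since---unlike finite type---local almost finite presentation constrains the entire structure sheaf rather than its $0$-truncation alone. I would argue on affine charts, where the claim is that $B$ is almost of finite presentation over $A$ if and only if $\Theta(B)$ is almost of finite presentation over $\Theta(A)$. The cleanest route is the characterization of almost finite presentation by commutation with filtered colimits after truncation: a map is locally almost of finite presentation precisely when, for each $n$, the functor it corepresents carries filtered colimits of $n$-truncated algebras to the corresponding colimit of spaces. Because $\Theta$ preserves small colimits and is compatible with the truncation functors, and because the two sides are linked through the adjunction $(\Theta^L, \Theta)$ of \ref{the forgetful functor is monadic and comonadic}, this criterion should pass through $\Theta$ in both directions, with conservativity of $\Theta$ yielding the equivalence. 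The delicate point---and the step I expect to demand the most care---is to match the truncation functors on $\CAlg^\Delta_R$ and $\CAlg^{\cn}_{R^\circ}$ precisely enough that the filtered-colimit criterion is genuinely preserved and reflected, rather than merely preserved in one direction.
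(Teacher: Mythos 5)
Your treatment of items (1)--(8) is essentially the paper's argument: $\Xi$ leaves the underlying $\infty$-topoi unchanged, is conservative, preserves fiber products, and identifies $0$-truncations, and the paper likewise handles (1)--(3) directly, (4)--(5) by diagonals, (6) via \ref{equivalence of 0-truncations}, and (7)--(8) by reduction to the underlying ordinary stacks (the paper reduces (7) to the absolute case over an affine base and then to $0$-truncated stacks, rather than running your iterated-diagonal induction; note that your induction cannot bottom out at ``an already-treated property'' in the list (1)--(6) --- it must terminate in the $0$-truncation equivalence \ref{equivalence between 0-truncated DerDM and SpDM}, exactly as the paper's argument does).

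The genuine gap is item (9). The purely formal transfer you propose --- $\Theta$ preserves colimits, is compatible with truncations, is conservative, and sits in the adjunction $(\Theta^L,\Theta)$ --- cannot prove the affine statement, because the same formal properties would equally well transfer \emph{finite presentation} (i.e.\ compactness), and that transfer is false: $\mathbb{Z}[x]=\LSym^\ast_{\mathbb{Z}}(\mathbb{Z})$ is a compact object of $\CAlg^\Delta_{\mathbb{Z}}$, but its underlying $\bbE_\infty$-ring is \emph{not} compact in $\CAlg^{\cn}_{\mathbb{Z}}$, since the unit map $\Sym^\ast_{\mathbb{Z}}(\mathbb{Z}) \rightarrow \mathbb{Z}[x]$ annihilates the higher homotopy of the free $\bbE_\infty$-ring (the homology of the symmetric groups), which requires infinitely many cell attachments. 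Equivalently, $\Theta$ does not preserve compact objects, so $\Theta^R$ does not preserve filtered colimits, and neither direction of your adjunction argument goes through even after restricting to truncated diagrams. What makes \emph{almost} finite presentation transfer is a non-formal input: for $M$ finitely generated free, the map $\Sym^\ast_{R^\circ}(M) \rightarrow \Theta(\LSym^\ast_R(M))$ is itself almost of finite presentation (this rests on the degreewise finite generation and stability of symmetric-group homology), which combined with the base-change description of $\Theta\circ\Theta^L$ in \ref{comparison of derived rings} yields the affine case. The paper avoids reproving this: it uses \'etale locality of the condition (\cite[4.2.1.1]{lurie2018sag} and its derived analogue) to reduce to the case where both $\sfX$ and $\sfY$ are affine, and then quotes Lurie's comparison of finiteness conditions for simplicial commutative rings and their underlying $\bbE_\infty$-rings (\cite[p.1683]{lurie2018sag}). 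Your proof becomes complete if you either cite that result or supply the symmetric-group-homology input; without one of these, the ``delicate point'' you flag is not merely delicate but unprovable by the formal means you allow yourself.
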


\begin{proof}
	We note that $f$ and $\Xi(f)$ have the same underlying morphisms of $\infty$-topoi. Using this together with \ref{left adjointable}, we obtain the assertions (1) through (3). By virtue of \ref{preservation of fiber products}, assertions (4) and (5) follow from (2) and (3), respectively. Using \ref{equivalence of 0-truncations}, assertion (6) follows immediately from \cite[4.2.0.2]{lurie2018sag} and its derived analogue. To prove assertions (7) and (8), note that both the property of being a relative $n$-stack morphism and the property of being a proper morphism are local for the \'etale topology; see \cite[6.3.3.6]{lurie2018sag}. Combining this observation with (1), we may assume that $\sfY$ is affine. We now prove (7). Using \ref{description using the ring of integers}, we can reduce to the case where $R=\mathbb{Z}$. By virtue of \cite[6.3.1.14]{lurie2018sag} and its derived analogue, it will suffice to show that $\sfX$ is a derived Deligne-Mumford $n$-stack if and only if $\Xi_\mathbb{Z}(\sfX)$ is a spectral Deligne-Mumford $n$-stack (see \ref{n-stacks}). Using \cite[1.6.8.3]{lurie2018sag} and its derived analogue, we may suppose that $\sfX$ and $\Xi_\mathbb{Z}(\sfX)$ are $0$-truncated (see \ref{truncated stacks}). In this case, the desired result follows from \ref{equivalence between 0-truncated DerDM and SpDM}. To prove (8), we may also assume that $f$ and $\Xi(f)$ are separated by virtue of (5). Then $\sfX$ and $\Xi(\sfX)$ are derived and spectral algebraic spaces over $R$ and $R^\circ$, respectively (see \cite[3.2.1.1]{lurie2018sag}). The desired result now follows from \cite[5.1.2.2]{lurie2018sag} and its derived analogue. We now prove (9). It follows from \cite[4.2.1.1]{lurie2018sag} and its derived analogue that the property of being a locally almost of finite presentation morphism is local on the source and target with respect to the \'etale topology in both the spectral and derived settings. We may therefore assume that both $\sfX$ and $\sfY$ are affine, in which case the desired result follows immediately from \cite[p.1683]{lurie2018sag}.
\end{proof}

\begin{pg}\label{QCoh in the spectral setting}
	Let $X: \CAlg^{\cn}_{R^\circ} \rightarrow \SSet$ be a functor. We can associate to $X$ an $\infty$-category $\QCoh(X)$ which we refer to as the \emph{$\infty$-category of quasi-coherent sheaves on $X$}; see \cite[6.2.2.1]{lurie2018sag}. An object $F \in \QCoh(X)$ can be described informally as follows: it is a rule which assigns to each pair $(A \in \CAlg^{\cn}_{R^\circ}, \eta \in X(A))$ an $A$-module $F_\eta$, depending functorially on $A$ in the following sense: if $A \rightarrow A'$ is a map in $\CAlg^{\cn}_{R^\circ}$ carrying $\eta$ to $\eta' \in X(A')$, then we have a canonical equivalence of $A'$-modules $F_{\eta'} \simeq F_\eta \otimes_A A'$ (see \cite[6.2.2.7]{lurie2018sag}). We will regard $\QCoh(X)$ as equipped with the symmetric monoidal structure of \cite[6.2.6]{lurie2018sag}, which can be described more informally as follows: $F \otimes G$ assigns the tensor product of $A$-modules $F_\eta \otimes_A G_\eta$ to each $\eta \in X(A)$. 
\end{pg}

\begin{notation}
	Let $\sfX$ be a spectral Deligne-Mumford stack over $R^\circ$. We let $\QCoh(\sfX)$ denote the $\infty$-category of quasi-coherent sheaves on the functor $X: \CAlg^{\cn}_{R^\circ} \rightarrow \SSet$ represented by $\sfX$ (given by the formula $X(A)=\Map_{\SpDM_R}(\Spec A, \sfX)$). We will refer to $\QCoh(\sfX)$ as the \emph{$\infty$-category of quasi-coherent sheaves on $\sfX$}. 
\end{notation}

\begin{definition}\label{quasi-coherent sheaves in the derived setting}
	Let $X: \CAlg^\Delta_R \rightarrow \SSet$ be a functor. We let $\QCoh(X)$ denote the $\infty$-category of quasi-coherent sheaves on the left Kan extension of $X$ along $\Theta: \CAlg^\Delta_R \rightarrow \CAlg^{\cn}_{R^\circ}$ in the sense of \cite[4.3.3.2]{MR2522659}; see \ref{QCoh in the spectral setting}. We will refer to $\QCoh(X)$ as the \emph{$\infty$-category of quasi-coherent sheaves on $X$}. Let $\sfX$ be a derived Deligne-Mumford stack over $R$ and let $X: \CAlg^\Delta_R \rightarrow \SSet$ be the functor represented by $\sfX$, given by the formula $X(A)=\Map_{\DerDM_R}(\Spec A, \sfX)$. We let $\QCoh(\sfX)$ denote the $\infty$-category of quasi-coherent sheaves on $X$ and refer to it as the \emph{$\infty$-category of quasi-coherent sheaves on $\sfX$}. 
\end{definition}

\begin{remark}\label{comparison for QCoh}
	Let $\sfX$ be a derived Deligne-Mumford stack over $R$. Arguing as in \cite[5.4]{MR4598184}, we see that the canonical map $\QCoh(\Xi(\sfX)) \rightarrow \QCoh(\sfX)$ is an equivalence of $\infty$-categories. In particular, if $\sfX=\Spec A$ is an affine derived Deligne-Mumford stack over $R$, then $\QCoh(\sfX)$ can be identified with the $\infty$-category $\Mod_{A^\circ}$ of $A^\circ$-module spectra (see \ref{the infinity-category of module spectra}).
\end{remark}

\begin{notation}\label{almost perfect objects in the derived setting}
	Let $\sfX$ be a derived Deligne-Mumford stack over $R$. We let $\APerf(\sfX)$ denote the full subcategory $\APerf(\Xi(\sfX)) \subseteq \QCoh(\Xi(\sfX))$ spanned by the almost perfect objects under the equivalence of $\infty$-categories $\QCoh(\Xi(\sfX)) \rightarrow \QCoh(\sfX)$ of \ref{comparison for QCoh}; see \cite[6.2.5.3]{lurie2018sag}.
\end{notation}

\section{Comparison of Formal Derived and Spectral Deligne-Mumford Stacks}

\begin{pg}
	Our goal in this section is to develop the theory of formal derived Deligne-Mumford stacks and describe its relationship to the theory of formal spectral Deligne-Mumford stacks. First, let us introduce some terminology. 
\end{pg}

\begin{definition}\label{adic simplicial commutative rings} 
	Let $A$ be a simplicial commutative ring. We will say that $A$ is an \emph{adic simplicial commutative ring} if the underlying commutative ring $\pi_0A$ is an adic ring, in the sense that $\pi_0A$ is equipped with the \emph{$I$-adic topology} for some finitely generated ideal $I \subseteq \pi_0A$ (that is, the topology whose basis of open sets are given by the subsets of the form $a+I^n$, where $a \in A$ and $n \geq 0$ is an integer; see \cite[8.1.1.1]{lurie2018sag}). We will say that $I \subseteq \pi_0A$ is an \emph{ideal of definition}.
\end{definition}

\begin{remark}\label{adic derived rings}
	Let $A$ be an adic simplicial commutative $R$-algebra. Since $\pi_0 \Theta(A) \simeq \pi_0A$, we can regard $\Theta(A)$ as an adic $\bbE_\infty$-algebra over $R^\circ$ with the same ideal of definition as $A$, in the sense of \cite[8.1.1.5]{lurie2018sag}.
\end{remark}

\begin{notation}
	Let $I \subseteq \pi_0R$ be a finitely generated ideal. We let $\Mod_{R^\circ}^{\Cpl(I)} \subseteq \Mod_{R^\circ}$ denote the full subcategory spanned by the \emph{$I$-complete $R^\circ$-modules} in the sense of \cite[7.3.1.1]{lurie2018sag}. We will regard $\Mod_{R^\circ}^{\Cpl(I)}$ as equipped with the symmetric monoidal structure of \cite[7.3.5.6]{lurie2018sag}. We let $(\Mod_{R^\circ}^{\Cpl(I)})_{\geq 0}$ denote the $\infty$-category $\Mod_{R^\circ}^{\Cpl(I)} \cap \Mod_{R^\circ}^{\cn}$; see \ref{the infinity-category of module spectra}. 
\end{notation}

\begin{pg}\label{completion is right t-exact}
	By virtue of \cite[7.3.1.5]{lurie2018sag}, the inclusion functor $\Mod_{R^\circ}^{\Cpl(I)} \rightarrow \Mod_{R^\circ}$ admits a left adjoint, which we refer to as the \emph{$I$-completion functor}. Note that this left adjoint functor carries $\Mod_{R^\circ}^{\cn}$ into $(\Mod_{R^\circ}^{\Cpl(I)})_{\geq 0}$ and is symmetric monoidal (see \cite[7.3.4.4]{lurie2018sag} and \cite[7.3.5.6]{lurie2018sag}). Consequently, $(\Mod_{R^\circ}^{\Cpl(I)})_{\geq 0}$ inherits a symmetric monoidal structure from $\Mod_{R^\circ}^{\Cpl(I)}$ and the functor $\Mod_{R^\circ}^{\cn} \rightarrow (\Mod_{R^\circ}^{\Cpl(I)})_{\geq 0}$ which is a left adjoint to the inclusion functor $(\Mod_{R^\circ}^{\Cpl(I)})_{\geq 0} \rightarrow \Mod_{R^\circ}^{\cn}$ is symmetric monoidal. 
\end{pg}

\begin{definition}\label{complete rings in the derived setting}
	Let $I$ be a finitely generated ideal of $\pi_0R$. We let $\CAlg^{\Delta, \Cpl(I)}_R \subseteq \CAlg^\Delta_R$ and $(\CAlg^{\Cpl(I)}_{R^\circ})_{\geq 0} \subseteq \CAlg^{\cn}_{R^\circ}$ denote the full subcategories spanned by those simplicial commutative $R$-algebras $A$ and connective $\bbE_\infty$-algebras $B$ over $R^\circ$ whose underlying $R^\circ$-modules $G(\Theta(A))$ and $G(B)$ are $I$-complete in the sense of \cite[7.3.1.1]{lurie2018sag}, respectively, where $G: \CAlg^{\cn}_{R^\circ} \rightarrow \Mod^{\cn}_{R^\circ}$ denotes the forgetful functor. We will refer to the objects of $\CAlg^{\Delta, \Cpl(I)}_R$ as \emph{$I$-complete simplicial commutative $R$-algebras}. 

\end{definition}

\begin{notation}\label{complete rings in the spectral setting}
	We let $(\CAlg^{\Cpl(I)}_{R^\circ})_{\geq 0}$ denote the $\infty$-category $\CAlg((\Mod_{R^\circ}^{\Cpl(I)})_{\geq 0})$ and refer to the objects of $(\CAlg^{\Cpl(I)}_{R^\circ})_{\geq 0}$ as \emph{$I$-complete connective $\bbE_\infty$-algebras over $R^\circ$}. Since the functor $\Mod_{R^\circ}^{\cn} \rightarrow (\Mod_{R^\circ}^{\Cpl(I)})_{\geq 0}$ of \ref{completion is right t-exact} is symmetric monoidal, it determines a functor $\CAlg^{\cn}_{R^\circ} \rightarrow (\CAlg^{\Cpl(I)}_{R^\circ})_{\geq 0}$ which is a left adjoint to the inclusion functor; we will refer to this left adjoint as the \emph{$I$-completion functor}. We will denote the image of $A \in \CAlg^{\cn}_{R^\circ}$ under this left adjoint by $A^\wedge_I$.
\end{notation}

\begin{pg}
	The theory of completions in the derived setting, based on the theory of completions in the spectral setting of \cite{lurie2018sag}, appears in \cite{MR4560539}. In what follows, we will give a functorial definition of completions of simplicial commutative rings and show that it coincides with the definition appearing in \cite[2.1.7]{MR4560539}; see \ref{comparison of completions}. 
\end{pg}

\begin{pg}
	Let $\RPres$ denote the $\infty$-category whose objects are presentable $\infty$-categories and whose morphisms are those functors which are accessible and preserve small limits; see \cite[5.5.3.1]{MR2522659}. Let $I \subseteq \pi_0R$ be a finitely generated ideal. According to \cite[5.5.3.18]{MR2522659}, the $\infty$-category $\CAlg^{\Delta, \Cpl(I)}_R$ fits into a pullback square 
$$
\Pull{\CAlg^{\Delta, \Cpl(I)}_R}{\CAlg^\Delta_R}{(\Mod_{R^\circ}^{\Cpl(I)})_{\geq 0}}{\Mod_{R^\circ}^{\cn}}{}{}{G \circ \Theta}{}
$$
in the $\infty$-category $\RPres$. It follows from the adjoint functor theorem of \cite[5.5.2.9]{MR2522659} that the inclusion functor $\CAlg^{\Delta, \Cpl(I)}_R \rightarrow \CAlg^\Delta_R$ admits a left adjoint. 
\end{pg}

\begin{definition}\label{completion in DAG}
	Let $I \subseteq \pi_0R$ be a finitely generated ideal. We will refer to the left adjoint to the inclusion functor $\CAlg^{\Delta, \Cpl(I)}_R \rightarrow \CAlg^\Delta_R$ as the \emph{$I$-completion functor} and denote the image of $A \in \CAlg^\Delta_R$ under this left adjoint by $A^\wedge_I$.
\end{definition}

\begin{pg}\label{completions of derived rings}
	The composition of the $I$-completion functor $\CAlg^{\Delta, \Cpl(I)}_R \rightarrow \CAlg^\Delta_R$ with $\Theta: \CAlg^\Delta_R \rightarrow \CAlg^{\cn}_{R^\circ}$ factors through the inclusion functor $\CAlg^{\cn, \Cpl(I)}_{R^\circ} \rightarrow \CAlg^{\cn}_{R^\circ}$, so that we have a pullback diagram 
$$
\Pull{\CAlg^{\Delta, \Cpl(I)}_R}{\CAlg^\Delta_R}{\CAlg^{\cn, \Cpl(I)}_{R^\circ}}{\CAlg^{\cn}_{R^\circ}}{}{}{\Theta}{}
$$
in the $\infty$-category $\RPres$. 
\end{pg}

\begin{pg}
	To understand the $I$-completion functor $\CAlg^{\Delta, \Cpl(I)}_R \rightarrow \CAlg^\Delta_R$ of \ref{completion in DAG}, we will need the following analogue of \cite[8.1.2.2]{lurie2018sag} in the derived setting, which can be proven in the same way as \cite[8.1.2.2]{lurie2018sag} (using derived symmetric algebras in place of free $\bbE_\infty$-algebras; see \ref{free algebras}): 
\end{pg}

\begin{lemma}\label{completions as limits in DAG} 
	Let $R$ be an adic simplicial commutative ring with a finitely generated ideal of definition $I \subseteq \pi_0R$. Then there exists a tower $\cdots R_3 \rightarrow R_2 \rightarrow R_1$ in $\CAlg^\Delta_R$ satisfying the following conditions: 
\begin{itemize}
\item[$(1)$] Each of the induced maps $\pi_0R_{i+1} \rightarrow \pi_0R_i$ is a surjection with nilpotent kernel. 
\item[$(2)$] For every simplicial commutative ring $A$, the canonical map $\colim \Map_{\CAlg^\Delta}(R_n, A) \rightarrow \Map_{\CAlg^\Delta}(R, A)$ induces a homotopy equivalence from $\colim \Map_{\CAlg^\Delta}(R_n, A)$ to the summand of $\Map_{\CAlg^\Delta}(R, A)$ spanned by those maps $R \rightarrow A$ which annihilate some power of the ideal $I$. 
\item[$(3)$] Each of the simplicial commutative rings $R_n$ is almost perfect when viewed as an $R^\circ$-module. 
\end{itemize}
\end{lemma}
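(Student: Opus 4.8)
The plan is to mimic the proof of \cite[8.1.2.2]{lurie2018sag}, replacing free $\bbE_\infty$-algebras by derived symmetric algebras $\LSym^\ast_R$ of \ref{free algebras}. The starting point is that $I \subseteq \pi_0R$ is finitely generated, say $I=(a_1, \ldots, a_k)$. Each generator $a_j$ lifts to an element of $\pi_0R$, and hence determines a map $R[x] \rightarrow R$ of simplicial commutative $R$-algebras carrying $x$ to (a lift of) $a_j$; equivalently, using \ref{the adjunction for the derived symmetric power functor}, we obtain a map from the derived symmetric algebra $\LSym^\ast_R(R^\circ)$ on a single generator. The tower $\{R_n\}$ will be built so that $R_n$ models the quotient of $R$ by the $n$th powers of the generators: concretely, one forms the (derived) tensor product over $j$ of the cofibers of the maps classifying $a_j^n$, realizing $R_n$ as a relative tensor product of copies of $\LSym^\ast_R$-algebras against $R$.

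First I would construct the individual cofiber algebras: for each generator $a_j$ and each $n$, form the pushout in $\CAlg^\Delta_R$ of the diagram $R \leftarrow \LSym^\ast_R(M) \rightarrow R$, where $M \simeq R^\circ$ is free of rank one and the two maps classify $a_j^n$ and $0$ respectively. Taking the tensor product over all $j$ yields $R_n$, and the evident compatibility $a_j^{n+1} \mapsto a_j^n$ upon raising powers supplies the transition maps $R_{n+1} \rightarrow R_n$, giving the tower. Condition $(1)$ is then a computation on $\pi_0$: by \ref{connected components of the unit map} the formation of $\pi_0$ commutes with $\Theta$ and with the relevant symmetric algebras, so $\pi_0 R_n$ is the ordinary quotient $(\pi_0R)/(a_1^n, \ldots, a_k^n)$, and the transition maps are the visible surjections whose kernels are generated by nilpotents modulo the next stage. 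Condition $(3)$ follows because $\LSym^\ast_R(M)$ applied to a finitely generated free (hence almost perfect) module is almost perfect over $R^\circ$ — here one invokes \ref{comparison of derived rings} to compute the underlying $R^\circ$-module of $\Theta(R_n)$ as a finite tensor-product/colimit of almost perfect modules, using that $\APerf$ is closed under finite colimits and tensor products.

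The crux is condition $(2)$: identifying $\colim_n \Map_{\CAlg^\Delta}(R_n, A)$ with the summand of $\Map_{\CAlg^\Delta}(R, A)$ consisting of maps annihilating some power of $I$. By the universal property of the pushouts, a map $R_n \rightarrow A$ is the data of a map $R \rightarrow A$ together with a nullhomotopy of each composite $\LSym^\ast_R(M) \rightarrow R \rightarrow A$ classifying $a_j^n$; such a nullhomotopy is precisely a path exhibiting the image of $a_j^n$ in $\pi_0 A$ as zero together with higher coherences. The hard part will be showing that the mapping space out of $R_n$ is a summand — i.e. that the relevant components are $(-1)$-truncated, so the homotopy-fiber data is a mere condition rather than extra structure — and that upon passing to the filtered colimit over $n$ this reassembles exactly into the locally nilpotent locus. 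This is where I expect to follow \cite[8.1.2.2]{lurie2018sag} most closely: the key input is that classifying $a_j^n$ factors through the augmentation $\LSym^\ast_R(M) \rightarrow R$ in the limit, so the space of nullhomotopies becomes contractible or empty according to whether $a_j^n$ already maps to zero in $\pi_0A$. I would verify the summand claim by identifying the fiber of $\Map_{\CAlg^\Delta}(R_n, A) \rightarrow \Map_{\CAlg^\Delta}(R, A)$ over a point $\phi$ with a product over $j$ of spaces of paths from $\phi(a_j^n)$ to $0$ in the underlying space of $A$; since $\pi_0A$ is discrete, each such space is either empty or contractible, giving the desired $(-1)$-truncatedness, and the colimit condition then records exactly the maps killing some $I^N$.
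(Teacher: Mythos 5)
Your construction of the tower is the same as the one the paper intends (the paper simply runs the proof of \cite[8.1.2.2]{lurie2018sag} with the derived symmetric algebra $\LSym^\ast_R$ in place of the free $\bbE_\infty$-algebra), and your treatment of conditions $(1)$ and $(3)$ is essentially correct modulo one slip noted below. But your proof of condition $(2)$ contains a genuine error at exactly the step you flagged as the hard part. The fiber of $\Map_{\CAlg^\Delta}(R_n, A) \rightarrow \Map_{\CAlg^\Delta}(R, A)$ over a point $\phi$ is, as you say, a product over $j$ of spaces of paths from $\phi(a_j)^n$ to $0$ in the underlying space $\Omega^\infty A$ of $A$. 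The assertion that each such path space is empty or contractible ``since $\pi_0 A$ is discrete'' is false: $\pi_0 A$ is discrete for \emph{every} simplicial commutative ring, and this says nothing about path spaces in $\Omega^\infty A$, which has higher homotopy. When nonempty, such a path space is a torsor over $\Omega(\Omega^\infty A) \simeq \Omega^\infty (\Omega A)$, whose homotopy groups are $\pi_{\ast+1}A$. Concretely, take $R=\mathbb{Z}[t]$, $I=(t)$, and any $A$ with $\pi_1 A \neq 0$: the fiber over a map with $\phi(t)=0$ is the space of loops at $0$ in $\Omega^\infty A$, which has $\pi_0 \cong \pi_1 A \neq 0$. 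So for fixed $n$ the map $\Map_{\CAlg^\Delta}(R_n, A) \rightarrow \Map_{\CAlg^\Delta}(R, A)$ is \emph{not} $(-1)$-truncated, and no argument along these lines can work termwise.

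The actual mechanism in \cite[8.1.2.2]{lurie2018sag}, which is what must be reproduced here, is that the empty-or-contractible dichotomy emerges only after passing to the filtered colimit over $n$: the transition map on fibers carries a nullhomotopy of $\phi(a_j)^n$ to one of $\phi(a_j)^{n+1}$ by multiplying by $\phi(a_j)$. If $\phi(a_j)$ is not nilpotent in $\pi_0 A$, every fiber in the tower is empty. If $\phi(a_j)^m = 0$ in $\pi_0 A$, then multiplication by $\phi(a_j)^m$ annihilates all of $\pi_\ast A$ (a graded $\pi_0A$-module), so after choosing compatible basepoints the homotopy groups of the colimit of the fiber-torsors are $\colim\bigl(\pi_{k+1}A \xrightarrow{\phi(a_j)} \pi_{k+1}A \xrightarrow{\phi(a_j)} \cdots\bigr) = 0$, whence the colimit is contractible. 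This colimit-level nilpotence argument is the content of the lemma and cannot be replaced by a fixed-$n$ truncatedness claim. Separately, a smaller correction to your proof of $(3)$: $\LSym^\ast_R(R^\circ) \simeq R[x]$ is \emph{not} almost perfect as an $R^\circ$-module (it has infinite rank); what makes $(3)$ work is the identification of each pushout $R \otimes_{R[x]} R$ (formed along $x \mapsto a_j^n$ and $x \mapsto 0$) with the cofiber of $a_j^n \colon R \rightarrow R$, so that $R_n$ is a finite tensor product of perfect $R^\circ$-modules and hence almost perfect.
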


\begin{remark}\label{tower from DAG to SAG}
	Suppose we are given objects $A \in \CAlg^\Delta_R$ and $B \in \CAlg_{R^\circ}^{\cn}$. Then giving a map $f: \Theta(A) \rightarrow B$ in $\CAlg_{R^\circ}^{\cn}$ is equivalent to giving a map $g: A \rightarrow \Theta^R(B)$ in $\CAlg^\Delta_R$. Let $I \subseteq \pi_0A$ be an ideal. Since the counit map $\Theta(\Theta^R(B)) \rightarrow B$ induces an injection on connected components by virtue of \ref{counit map and injection on connected components}, it follows that $f$ annihilates some power of the ideal $I$ if and only if $g$ annihilates some power of the ideal of $I$. Using this observation, we immediately deduce that in the situation of \ref{completions as limits in DAG}, the induced tower $\{ \Theta(R_n) \}$ of connective $\bbE_\infty$-algebras over $R^\circ$ satisfies the requirements of \cite[8.1.2.2]{lurie2018sag}. 
\end{remark}

\begin{lemma}\label{formal spectrum as a colimit}
	In the situation of \emph{\ref{completions as limits in DAG}}, the functor $F: \CAlg^{\Delta, \Cpl(I)}_R \rightarrow \lim \CAlg^\Delta_{R_n}$, given on objects by the formula $A \mapsto \{A\otimes_R R_n\}$, is an equivalence of $\infty$-categories. 
\end{lemma}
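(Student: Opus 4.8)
The plan is to exhibit an explicit inverse to $F$ and to check that the unit and counit of the resulting adjunction are equivalences, reducing both to a single statement about $I$-complete $R^\circ$-modules. Define $\Phi \colon \lim \CAlg^\Delta_{R_n} \to \CAlg^\Delta_R$ on objects by $\{B_n\} \mapsto \lim_n B_n$, where each $B_n$ is regarded as a simplicial commutative $R$-algebra via the structure map $R \to R_n$ and the limit is taken along the transition maps $B_{n+1} \to B_{n+1}\otimes_{R_{n+1}}R_n \simeq B_n$. Using the adjunction between base change $(-)\otimes_R R_n$ and restriction of scalars, together with the formula $\Map_{\lim \CAlg^\Delta_{R_n}}(\{A\otimes_R R_n\},\{B_n\}) \simeq \lim_n \Map_{\CAlg^\Delta_{R_n}}(A\otimes_R R_n, B_n)$ for mapping spaces in a limit of $\infty$-categories, I obtain natural equivalences $\lim_n \Map_{\CAlg^\Delta_{R_n}}(A\otimes_R R_n, B_n) \simeq \lim_n \Map_{\CAlg^\Delta_R}(A,B_n)\simeq \Map_{\CAlg^\Delta_R}(A,\lim_n B_n)$, exhibiting $F$ as a left adjoint to $\Phi$.

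Next I verify that $\Phi$ factors through $\CAlg^{\Delta,\Cpl(I)}_R$, so that the adjunction takes place between the correct $\infty$-categories. Applying condition $(2)$ of \ref{completions as limits in DAG} to $A = R_m$ and the identity map shows that the structure map $R \to R_m$ annihilates a power of $I$; hence the image of $I$ in $\pi_0 R_m$ is nilpotent. Since the $I$-action on the homotopy of any $R_m$-algebra $B_m$ factors through $\pi_0 R_m$, multiplication by a suitable power of $I$ is null on the underlying $R^\circ$-module of $B_m$, and a module annihilated by a power of $I$ is $I$-complete; thus each $B_n$ lies in $\CAlg^{\Delta,\Cpl(I)}_R$ (see \ref{complete rings in the derived setting}). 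As $I$-complete $R^\circ$-modules are closed under small limits, $\lim_n B_n$ is $I$-complete, so $\Phi$ indeed lands in $\CAlg^{\Delta,\Cpl(I)}_R$. It remains to show that the unit $A \to \lim_n(A\otimes_R R_n)$ and the counit $(\lim_m B_m)\otimes_R R_k \to B_k$ are equivalences.

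I would verify both after passing to $R^\circ$-modules. The functor $\Theta$ and the forgetful functor $G$ of \ref{free algebras} are conservative, preserve small limits (being right adjoints), and carry relative tensor products to relative tensor products of modules (using that $\Theta$ preserves colimits by \ref{the forgetful functor is monadic and comonadic} and that $G$ preserves tensor products, \cite[3.2.4.4]{lurie2017ha}); the same holds levelwise on $\lim_n \CAlg^\Delta_{R_n}$, and equivalences in a limit of $\infty$-categories are detected objectwise. Writing $M = G\Theta(A)$ and $M_n = G\Theta(B_n)$, the unit becomes $M \to \lim_n (M\otimes_{R^\circ}\Theta(R_n))$ and the counit becomes $(\lim_m M_m)\otimes_{R^\circ}\Theta(R_k)\to M_k$. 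These are precisely the two structural equivalences of the module-level comparison $M \mapsto \{M\otimes_{R^\circ}\Theta(R_n)\}$ between $I$-complete connective $R^\circ$-modules and $\lim_n \Mod^{\cn}_{\Theta(R_n)}$, whose inverse is $\{N_n\}\mapsto \lim_n N_n$: the unit expresses that an $I$-complete module is recovered as the limit of its base changes (i.e. its completion is computed by the tower), while the counit expresses the reverse composite.

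Since the tower $\{\Theta(R_n)\}$ satisfies the hypotheses of \cite[8.1.2.2]{lurie2018sag} by \ref{tower from DAG to SAG}, this module-level comparison is the corresponding statement in the spectral setting of \cite{lurie2018sag}, and invoking it completes the argument. I expect the main obstacle to be the counit, i.e. the identification $(\lim_m M_m)\otimes_{R^\circ}\Theta(R_k)\simeq M_k$: it is exactly here that the almost perfectness of the $R_n$ over $R^\circ$ (condition $(3)$ of \ref{completions as limits in DAG}) is indispensable, as it is what allows base change along $R^\circ \to \Theta(R_k)$ to be interchanged with the completed inverse limit. The unit, by contrast, is the more routine assertion that such towers compute the $I$-adic completion.
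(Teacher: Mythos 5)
Your proposal is correct and follows essentially the same route as the paper's proof: both exhibit the right adjoint $\{B_n\} \mapsto \lim_n B_n$, reduce the unit and counit to the level of $R^\circ$-modules using that the forgetful functors are conservative, preserve small limits, and carry tensor products of algebras to relative tensor products of modules, and then invoke the spectral-setting equivalence between $(\Mod_{R^\circ}^{\Cpl(I)})_{\geq 0}$ and $\lim_n \Mod_{R_n^\circ}^{\cn}$ (the proof of \cite[8.3.4.4]{lurie2018sag}), which applies because the tower $\{\Theta(R_n)\}$ satisfies \cite[8.1.2.2]{lurie2018sag} by \ref{tower from DAG to SAG}. The extra details you supply beyond the paper's terser write-up (the mapping-space verification of the adjunction, and the nilpotence argument showing each $B_n$, hence $\lim_n B_n$, is $I$-complete) are correct and simply make explicit what the paper leaves implicit.
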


\begin{proof}
	Note that the functor $F$ admits a right adjoint $G: \lim \CAlg^\Delta_{R_n} \rightarrow \CAlg^{\Delta, \Cpl(I)}_R$, given on objects by $\{A_n\} \mapsto \lim A_n$. It will suffice to show that $F$ and $G$ are mutually inverse equivalences. Since the forgetful functors $\CAlg^{\Delta, \Cpl(I)}_R \rightarrow (\Mod_{R^\circ}^{\Cpl(I)})_{\geq 0}$ and $\CAlg^\Delta_{R_n} \rightarrow \Mod_{R_n^\circ}^{\cn}$ are conservative and preserve small limits, we are reduced to proving that the adjoint functors $\adjoint{(\Mod_{R^\circ}^{\Cpl(I)})_{\geq 0}}{\lim \Mod_{R_n^\circ}^{\cn}}$ are mutually inverse equivalences, where the left and right adjoints are given on objects by $M \mapsto \{M\otimes_{R^\circ} R_n^{\circ}\}$ and $\{M_n\} \mapsto \lim M_n$, respectively. This follows from the proof of \cite[8.3.4.4]{lurie2018sag}, since the tower $\{\Theta(R_n) \}$ in $\CAlg_{R^\circ}^{\cn}$ satisfies the requirements of \cite[8.1.2.2]{lurie2018sag} (see \ref{tower from DAG to SAG}). 
\end{proof}

\begin{proposition}\label{comparison of completions}
	The diagram appearing in \emph{\ref{completions of derived rings}} is left adjointable. In other words, for every simplicial commutative $R$-algebra $A$, the canonical map $(\Theta(A))^\wedge_I \rightarrow \Theta(A^\wedge_I)$ is an equivalence in $\CAlg^{\Cpl(I)}_{R^\circ}$. 
\end{proposition}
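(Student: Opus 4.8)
The plan is to interpret the asserted left adjointability as a Beck--Chevalley statement and to verify it by passing to the tower description of completions supplied by \ref{completions as limits in DAG} and \ref{formal spectrum as a colimit}. Concretely, the horizontal inclusions in the square of \ref{completions of derived rings} admit left adjoints, namely the $I$-completion functors $(-)^\wedge_I$ of \ref{completion in DAG} and \ref{complete rings in the spectral setting}, and the mate of the (commuting) square of right adjoints is precisely the natural map $(\Theta(A))^\wedge_I \to \Theta(A^\wedge_I)$. So it suffices to show that the two composites $\Theta \circ (-)^\wedge_I$ and $(-)^\wedge_I \circ \Theta$ from $\CAlg^\Delta_R$ to $\CAlg^{\cn, \Cpl(I)}_{R^\circ}$ agree, compatibly with this comparison map.

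First I would fix the tower $\cdots \to R_2 \to R_1$ of \ref{completions as limits in DAG} and record two consequences. On the derived side, \ref{formal spectrum as a colimit} identifies $\CAlg^{\Delta, \Cpl(I)}_R \simeq \lim_n \CAlg^\Delta_{R_n}$ via $A' \mapsto \{A' \otimes_R R_n\}$. On the spectral side, \ref{tower from DAG to SAG} guarantees that $\{\Theta(R_n)\}$ satisfies the hypotheses of \cite[8.1.2.2]{lurie2018sag}, so the same argument (reducing to modules, where it is \cite[8.3.4.4]{lurie2018sag}) yields $\CAlg^{\cn, \Cpl(I)}_{R^\circ} \simeq \lim_n \CAlg^{\cn}_{\Theta(R_n)}$ via $B \mapsto \{B \otimes_{R^\circ} \Theta(R_n)\}$. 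The key bookkeeping step is to check that, under these equivalences, each completion functor becomes levelwise base change: the functor $A \mapsto \{A \otimes_R R_n\}$ is left adjoint to $\{A_n\} \mapsto \lim_n A_n$, since $\Map(\{A \otimes_R R_n\}, \{A_n\}) \simeq \lim_n \Map_{\CAlg^\Delta_R}(A, A_n) \simeq \Map_{\CAlg^\Delta_R}(A, \lim_n A_n)$, and by uniqueness of adjoints this composite must coincide with $F \circ (-)^\wedge_I$; the spectral case is identical. In particular $A^\wedge_I \otimes_R R_n \simeq A \otimes_R R_n$.

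It then remains to compare the two towers. Using that $\Theta$ preserves pushouts (being a left adjoint, by \ref{the forgetful functor is monadic and comonadic}), together with $\Theta(R) = R^\circ$ and $\Theta(R_n) = \Theta(R_n)$, I would compute on the one hand $\Theta(A^\wedge_I) \otimes_{R^\circ} \Theta(R_n) \simeq \Theta(A^\wedge_I \otimes_R R_n) \simeq \Theta(A \otimes_R R_n)$, so that $\Theta(A^\wedge_I)$ corresponds to the tower $\{\Theta(A \otimes_R R_n)\}$; and on the other hand $(\Theta(A))^\wedge_I$ corresponds to $\{\Theta(A) \otimes_{R^\circ} \Theta(R_n)\} \simeq \{\Theta(A \otimes_R R_n)\}$. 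Both composites therefore produce the same object of $\lim_n \CAlg^{\cn}_{\Theta(R_n)}$, and chasing the relevant units and counits shows that the comparison map $(\Theta(A))^\wedge_I \to \Theta(A^\wedge_I)$ realizes this identification, hence is an equivalence. The main obstacle I anticipate is the second step: carefully justifying that both completion functors are computed by levelwise base change under the formal-spectrum equivalences (equivalently, the compatibility $A^\wedge_I \otimes_R R_n \simeq A \otimes_R R_n$ and its spectral counterpart) and setting up the spectral analogue of \ref{formal spectrum as a colimit} for the tower $\{\Theta(R_n)\}$; once these are in hand, the remaining comparison is a formal consequence of $\Theta$ commuting with tensor products.
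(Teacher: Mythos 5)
Your proposal is correct and takes essentially the same route as the paper's proof: both fix the tower $\{R_n\}$ of \ref{completions as limits in DAG}, identify $A^\wedge_I$ with $\lim_n A \otimes_R R_n$ (via \ref{formal spectrum as a colimit}) and $(\Theta(A))^\wedge_I$ with $\lim_n \Theta(A) \otimes_{R^\circ} \Theta(R_n)$ (via \ref{tower from DAG to SAG}), and then conclude from the fact that $\Theta$ preserves small limits and tensor products (\ref{the forgetful functor is monadic and comonadic}). The only difference is presentational: where you re-derive the spectral-side identification from a spectral analogue of \ref{formal spectrum as a colimit} by uniqueness of adjoints, the paper simply cites \cite[8.1.2.3]{lurie2018sag}.
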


\begin{proof}
	Let $\{ R_n \}_{n>0}$ be a tower as in \ref{completions as limits in DAG}. It then follows from \ref{formal spectrum as a colimit} that we can identify $A^\wedge_I$ with $\lim A\otimes_R R_n$. According to \ref{tower from DAG to SAG}, the induced tower $\{ \Theta(R_n) \}$ in $\CAlg_{R^\circ}^{\cn}$ satisfies the requirements of \cite[8.1.2.2]{lurie2018sag}, so that we can identify $(\Theta(A))^\wedge_I$ with $\lim \Theta(A)\otimes_{R^\circ} R_n^\circ$; see \cite[8.1.2.3]{lurie2018sag}. The desired result now follows from the fact that the functor $\Theta$ preserves small limits and tensor products; see \ref{the forgetful functor is monadic and comonadic}. 
\end{proof}

\begin{remark}\label{complete derived rings are compatible}
	Let $A$ be an adic simplicial commutative $R$-algebra with a finitely generated ideal of definition $I \subseteq \pi_0A$. It follows from \ref{comparison of completions} and \ref{the forgetful functor is monadic and comonadic} (which guarantees that $\Theta$ is conservative) that $A$ is an $I$-complete simplicial commutative $R$-algebra if and only if $\Theta(A)$ is an $I$-complete connective $\bbE_\infty$-algebra over $R^\circ$ (see \ref{complete rings in the derived setting} and \ref{complete rings in the spectral setting}). In particular, $R$ is $I$-complete if and only if the underlying connective $\bbE_\infty$-ring $R^\circ$ is $I$-complete. 
\end{remark}

\begin{pg}
	We now define the \emph{formal spectrum} of a simplicial commutative ring by proceeding as in \cite[8.1.1.10]{lurie2018sag}.
\end{pg}

\begin{definition}\label{structure sheaf for formal spectrum} 
	Let $A$ be an adic simplicial commutative ring and let $I \subseteq \pi_0A$ be a finitely generated ideal of definition; see \ref{adic simplicial commutative rings}. Let $U: \CAlg^{\Delta, \et}_A \rightarrow \SSet$ denote the sheaf with respect to the \'etale topology of \ref{etale topology on derived rings} given by the formula
$$
U(B)=
\begin{cases}
\Delta^0 & \text{if } B\otimes_A (\pi_0A)/I \simeq 0 \\
\emptyset & \text{otherwise}.
\end{cases}
$$
Let $\Shv^{\Delta, \ad}_A \subseteq \Shv((\CAlg^{\Delta, \et}_A)^{\op})$ denote the \emph{closed subtopos complementary to $U$} in the sense of \cite[7.3.2.6]{MR2522659}: that is, it is the full subcategory spanned by those objects $F \in \Shv((\CAlg^{\Delta, \et}_A)^{\op})$ for which $F(B)$ is contractible whenever the image of $I$ in $\pi_0B$ generates the unit ideal. We let $\calO_{\Spf A}$ denote the composition of the forgetful functor $\CAlg^{\Delta, \et}_A \rightarrow \CAlg^\Delta_A$ with the $I$-completion functor $\CAlg^\Delta_A \rightarrow \CAlg^{\Delta, \Cpl(I)}_A \subseteq \CAlg^\Delta_A$ of \ref{completion in DAG}. 
\end{definition}

\begin{pg}
	Since the forgetful functor $\CAlg^{\Delta, \et}_A \rightarrow \CAlg^\Delta_A$ is a $\CAlg^\Delta_A$-valued sheaf with respect to the \'etale topology on $(\CAlg^{\Delta, \et}_A)^{\op}$ and the $I$-completion functor of \ref{completion in DAG} preserves small limits, we can regard $\calO_{\Spf A}$ as a $\CAlg^\Delta_A$-valued sheaf on the $\infty$-topos $\Shv((\CAlg^{\Delta, \et}_A)^{\op})$. Moreover, $\calO_{\Spf A}$ can be regarded as a $\CAlg^\Delta_A$-valued sheaf on the closed subtopos $\Shv^{\Delta, \ad}_A$, since $\calO_{\Spf A}(B)=B^\wedge_I \simeq 0$ whenever $I(\pi_0B)=\pi_0B$
\end{pg}

\begin{pg}
	We will need the following analogue of \cite[8.1.1.13]{lurie2018sag} in the derived setting:
\end{pg}

\begin{lemma}\label{structure sheaves for affine formal spectra are strictly Henselian}
	Let $A$ be an adic simplicial commutative ring with a finitely generated ideal of definition $I \subseteq \pi_0A$. Then the $\CAlg^\Delta_A$-valued sheaf $\calO_{\Spf A}$ on the $\infty$-topos $\Shv^{\Delta, \ad}_A$ is strictly Henselian in the sense of \emph{\ref{strictly Henselian objects and local morphisms in the derived setting}}. 
\end{lemma}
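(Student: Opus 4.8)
The plan is to deduce the statement from its spectral counterpart \cite[8.1.1.13]{lurie2018sag} by transporting all of the relevant data along the comparison functor $\Theta$. By \ref{strictly Henselian objects and local morphisms in the derived setting}, the sheaf $\calO_{\Spf A}$ is strictly Henselian precisely when the $\CAlg^{\cn}_{A^\circ}$-valued sheaf $\Psi_{\calX}(\calO_{\Spf A}) \simeq \Theta \circ \calO_{\Spf A}$ on $\calX = \Shv^{\Delta, \ad}_A$ is strictly Henselian in the spectral sense of \ref{strictly Henselian objects and local morphisms in the spectral setting}; here we invoke the evident analogue of \ref{comparison for sheaves on derived rings} with $A$ in place of $R$, which identifies $\Psi_{\calX}$ with postcomposition by $\Theta$. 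Thus it suffices to identify $\Theta \circ \calO_{\Spf A}$, together with its ambient $\infty$-topos, with the structure sheaf of the spectral formal spectrum $\Spf \Theta(A)$ of \cite[8.1.1.10]{lurie2018sag}, and then quote \cite[8.1.1.13]{lurie2018sag}.

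First I would identify the underlying $\infty$-topoi. By \ref{the equivalence of etale sites}, the functor $\Theta$ restricts to an equivalence of \'etale sites $\CAlg^{\Delta, \et}_A \rightarrow \CAlg^{\et}_{A^\circ}$, hence to an equivalence $\Shv((\CAlg^{\Delta, \et}_A)^{\op}) \simeq \Shv((\CAlg^{\et}_{A^\circ})^{\op})$. Under this equivalence the open sheaf $U$ of \ref{structure sheaf for formal spectrum} corresponds to its spectral counterpart: since $\Theta$ preserves tensor products (see \ref{the forgetful functor is monadic and comonadic}), for each \'etale $A$-algebra $B$ we have $\Theta(B\otimes_A (\pi_0A)/I) \simeq \Theta(B)\otimes_{A^\circ}(\pi_0A^\circ)/I$, and because $\Theta$ is conservative the two vanishing conditions defining $U$ agree. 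Consequently the closed subtopos $\Shv^{\Delta, \ad}_A$ is carried to the closed subtopos underlying $\Spf \Theta(A)$.

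Next I would identify the structure sheaves. Applying \ref{comparison of completions} with $A$ in place of $R$ yields, for each \'etale $A$-algebra $B$, a canonical equivalence $\Theta(\calO_{\Spf A}(B)) = \Theta(B^\wedge_I) \simeq (\Theta(B))^\wedge_I$, which is precisely the value at $\Theta(B)$ of the structure sheaf $\calO_{\Spf \Theta(A)}$. As these equivalences are natural in $B$, the sheaf $\Theta \circ \calO_{\Spf A}$ is identified with $\calO_{\Spf \Theta(A)}$ under the equivalence of topoi above. The spectral result \cite[8.1.1.13]{lurie2018sag} then guarantees that $\calO_{\Spf \Theta(A)}$ is strictly Henselian, which completes the argument.

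The main point requiring care --- rather than a genuine obstacle --- is verifying that the two constructions (the open $U$ and the completed structure sheaf) are genuinely compatible with $\Theta$, i.e.\ that the comparison functor carries the derived data to the spectral data on the nose. This rests entirely on the facts that $\Theta$ preserves colimits and is conservative (\ref{the forgetful functor is monadic and comonadic}) and on the compatibility of completions established in \ref{comparison of completions}. Once these identifications are in place, no further computation in the derived setting is needed, since strict Henselianness is by definition a condition on the associated spectral sheaf.
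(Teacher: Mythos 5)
Your proposal is correct and follows essentially the same route as the paper's own proof: identify the closed subtopos $\Shv^{\Delta,\ad}_A$ with its spectral counterpart $\Shv^{\ad}_{A^\circ}$ via the equivalence of \'etale sites of \ref{the equivalence of etale sites}, identify $\Theta_A \circ \calO_{\Spf A}$ with $\calO_{\Spf A^\circ}$ using the compatibility of completions from \ref{comparison of completions}, and then invoke \cite[8.1.1.13]{lurie2018sag} together with the definition \ref{strictly Henselian objects and local morphisms in the derived setting}. The only difference is cosmetic: you spell out the verification that the open object $U$ (and hence the closed subtopos) matches under $\Theta$, a point the paper asserts without detail.
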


\begin{proof}
	Let us regard $A^\circ$ as an adic $\bbE_\infty$-ring with the ideal of definition $I$ (see \ref{adic derived rings}). We first note that the closed subtopos $\Shv^{\Delta, \ad}_A \subseteq \Shv((\CAlg^{\Delta, \et}_A)^{\op})$ can be identified with the closed subtopos $\Shv^{\ad}_{A^\circ} \subseteq \Shv((\CAlg^{\et}_{A^\circ})^{\op})$ of \cite[8.1.1.8]{lurie2018sag} under the equivalence of $\infty$-categories $\Shv((\CAlg^{\Delta, \et}_A)^{\op}) \simeq \Shv((\CAlg^{\et}_{A^\circ})^{\op})$ of \ref{the equivalence of etale sites}. Let $\calO_{\Spf A^\circ}$ be the composition of the forgetful functor $\CAlg^{\et}_{A^\circ} \rightarrow \CAlg_{A^\circ}^{\cn}$ with the $I$-completion functor $\CAlg_{A^\circ}^{\cn} \rightarrow (\CAlg^{\Cpl(I)}_{R^\circ})_{\geq 0} \subseteq \CAlg_{A^\circ}^{\cn}$ of \ref{complete rings in the spectral setting}. Using \cite[8.1.1.10]{lurie2018sag}, we can regard $\calO_{\Spf A^\circ}$ as a $\CAlg_{A^\circ}^{\cn}$-valued sheaf on $\Shv^{\ad}_{A^\circ}$. Then \cite[8.1.1.13]{lurie2018sag} guarantees that $\calO_{\Spf A^\circ}$ is strictly Henselian. Consequently, to prove that $\calO_{\Spf A}$ is strictly Henselian, it will suffice to show that $\Theta_A \circ \calO_{\Spf A}$ can be identified with $\calO_{\Spf A^\circ}$ under the equivalence of $\infty$-topos $\Shv^{\Delta, \ad}_A \simeq \Shv^{\ad}_{A^\circ}$; see \ref{strictly Henselian objects and local morphisms in the spectral setting}. Unwinding the definitions of $\calO_{\Spf A}$ and $\calO_{\Spf A^\circ}$, we are reduced to proving that the diagram
$$
\xymatrix{
\CAlg^\Delta_A \ar[r] \ar[d]^-{\Theta_A} & \CAlg^{\Delta, \Cpl(I)}_A \ar[d] \\
\CAlg_{A^\circ}^{\cn} \ar[r] & \CAlg^{\cn, \Cpl(I)}_{A^\circ}
}
$$
commutes, where the horizontal maps are the $I$-completion functors and the right vertical map is the restriction of $\Theta_A$ appearing in \ref{completions of derived rings}. This follows from \ref{comparison of completions}. 
\end{proof}

\begin{notation} 
	Let $A$ be an adic simplicial commutative ring with a finitely generated ideal of definition $I \subseteq \pi_0A$. It follows from \ref{structure sheaves for affine formal spectra are strictly Henselian} that we can regard the pair $(\Shv^{\Delta, \ad}_A,\calO_{\Spf A})$ as an object of $\iTop_{\CAlg^\Delta_A}^{\sHen}$. We will denote this object by $\Spf A$ and refer to it as the \emph{formal spectrum of $A$}.  
\end{notation}

\begin{remark}\label{the canonical map from the formal spectra} 
	The inclusion of $\infty$-topoi $\Shv^{\Delta, \ad}_A \subseteq \Shv((\CAlg^{\Delta, \et}_A)^{\op})$ and the $I$-completion functor of \ref{completion in DAG} determine a morphism $\Spf A \rightarrow \Spec A$ in $\iTop_{\CAlg^\Delta_A}^{\sHen}$ .
\end{remark}

\begin{definition} 
	Let $\frakX=(\calX, \calO_{\calX})$ be an object of the $\infty$-category $\iTop_{\CAlg^\Delta}$. We will say that $\frakX$ is an \emph{affine formal derived Deligne-Mumford stack} if it is equivalent to $\Spf A$ for some adic simplicial commutative ring $A$. More generally, we will say that $\frakX$ is a \emph{formal derived Deligne-Mumford stack} if there exists a collection of objects $U_\alpha \in \calX$ for which the map $\coprod U_\alpha \rightarrow \ast$ is an effective epimorphism, where $\ast \in \calX$ is a final object, and each $\frakX_{U_\alpha}$ is an affine formal derived Deligne-Mumford stack (see \ref{sheaves on infinity-topoi}). 
\end{definition}

\begin{notation}\label{the infinity-categories of formal derived and spectral Deligne-Mumford stacks}
	We let $\fDerDM_R \subseteq \iTop_{\CAlg^\Delta_R}^{\sHen}$ and $\fSpDM_{R^\circ} \subseteq \iTop_{\CAlg_{R^\circ}^{\cn}}^{\sHen}$ denote the full subcategories spanned by the formal derived and spectral Deligne-Mumford stacks over $\Spec R$ and $\Spec R^\circ$, respectively (see \cite[8.1.3.1]{lurie2018sag}). 
\end{notation}

\begin{remark}\label{derived Deligne-Mumford stacks as formal derived Deligne-Mumford stacks}
	Every derived Deligne-Mumford stack can be regarded as a formal derived Deligne-Mumford stack, since we can regard each simplicial commutative ring $A$ as an adic simplicial commutative ring by taking the zero ideal of $\pi_0A$ as an ideal of definition. 
\end{remark}

\begin{proposition}\label{the underlying forma spectra}
	Let $A$ be an adic simplicial commutative $R$-algebra with a finitely generated ideal of definition $I \subseteq \pi_0A$. Then the canonical map $\Xi(\Spf A) \rightarrow \Spf \Theta(A)$ is an equivalence in $\iTop_{\CAlg_{R^\circ}^{\cn}}^{\sHen}$, where we regard $\Theta(A)$ as an adic $\bbE_\infty$-algebra over $R^\circ$ with the ideal of definition $I$ (see \emph{\ref{adic derived rings}}). Moreover, if $\frakX$ is a formal derived Deligne-Mumford stack over $R$, then $\Xi(\frakX)$ is a formal spectral Deligne-Mumford stack over $R^\circ$. 
\end{proposition}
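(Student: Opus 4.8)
The plan is to reduce everything to the affine comparison, which in fact has essentially already been carried out inside the proof of \ref{structure sheaves for affine formal spectra are strictly Henselian}. First I would unwind the source. By the description of $\Xi$ in \ref{comparison for derived ringed infinity-topoi}, the object $\Xi(\Spf A)$ is the pair $(\Shv^{\Delta, \ad}_A, \Psi_{\calX}(\calO_{\Spf A}))$, where $\Psi_{\calX}$ is postcomposition with $\Theta$ (see \ref{comparison for sheaves on derived rings}); thus its structure sheaf is $\Theta \circ \calO_{\Spf A}$. On the target side, $\Spf \Theta(A)$ has underlying $\infty$-topos $\Shv^{\ad}_{A^\circ}$ and structure sheaf $\calO_{\Spf \Theta(A)}$. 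By the equivalence of \'etale sites \ref{the equivalence of etale sites}, together with the identification of closed subtopoi $\Shv^{\Delta, \ad}_A \simeq \Shv^{\ad}_{A^\circ}$ recorded in the proof of \ref{structure sheaves for affine formal spectra are strictly Henselian}, the two underlying $\infty$-topoi agree.

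It then remains to match the structure sheaves, but this is exactly the computation performed in that same proof: the commuting square relating the completion functors and $\Theta$ (which follows from \ref{comparison of completions}) shows that $\Theta \circ \calO_{\Spf A}$ is carried to $\calO_{\Spf \Theta(A)}$ under the topos equivalence. Hence $\Xi(\Spf A) \simeq \Spf \Theta(A)$. To finish the affine case I would verify that this equivalence is induced by the canonical comparison map of the statement; this is a formal check, since both are determined by the completion functors and the inclusion of the adic subtopos.

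For the global assertion, let $\frakX = (\calX, \calO_{\calX})$ be a formal derived Deligne-Mumford stack over $R$, and choose an effective epimorphism $\coprod U_\alpha \rightarrow \ast$ in $\calX$ with each $\frakX_{U_\alpha} \simeq \Spf A_\alpha$ affine. Since $\Xi$ does not alter the underlying $\infty$-topos and preserves \'etale localizations, \ref{preservation of etale morphisms} gives $(\Xi(\frakX))_{U_\alpha} \simeq \Xi(\frakX_{U_\alpha}) \simeq \Xi(\Spf A_\alpha)$, which is an affine formal spectral Deligne-Mumford stack by the affine case just proved. As $\coprod U_\alpha \rightarrow \ast$ remains an effective epimorphism in the unchanged $\infty$-topos $\calX$, the object $\Xi(\frakX)$ satisfies the defining condition of \ref{the infinity-categories of formal derived and spectral Deligne-Mumford stacks}, so $\Xi(\frakX) \in \fSpDM_{R^\circ}$.

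The only genuinely nontrivial input is the identification of structure sheaves, namely the compatibility of $I$-completion with $\Theta$ supplied by \ref{comparison of completions}; everything else is bookkeeping about topoi and covers. Accordingly, I expect the main obstacle to be purely presentational: confirming that the equivalence produced above coincides with the canonical map of the statement, rather than establishing the equivalence itself.
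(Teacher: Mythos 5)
Your proposal is correct and follows essentially the same route as the paper: the affine case is exactly the identification of underlying $\infty$-topoi and structure sheaves carried out in the proof of \ref{structure sheaves for affine formal spectra are strictly Henselian} (resting on \ref{comparison of completions}), and the global case is the same \'etale-local cover argument via \ref{preservation of etale morphisms}, as in the proof of \ref{associated spectral and derived Deligne-Mumford stacks}. The paper merely compresses both steps into citations, so your explicit unwinding adds detail but no new ideas or gaps.
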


\begin{proof}
	The first assertion is immediate from the proof of \ref{structure sheaves for affine formal spectra are strictly Henselian}. The second follows from the first assertion, since the condition that $\Xi(\frakX)$ be a formal spectral Deligne-Mumford stack over $R^\circ$ is local on the underlying $\infty$-topos of $\frakX$ by virtue of the derived analogue of \cite[8.1.3.3]{lurie2018sag} (see also the proof \ref{associated spectral and derived Deligne-Mumford stacks}).
\end{proof}

\begin{remark}\label{the functor for formal derived and spectral Deligne-Mumford stacks}
	It follows from \ref{the underlying forma spectra} that the functor $\Xi: \iTop_{\CAlg^\Delta_R}^{\sHen} \rightarrow \iTop_{\CAlg^{\cn}_{R^\circ}}^{\sHen}$ of \ref{restriction to the subcategory of strictly Henselian objects and local morphisms} restricts to a functor $\fDerDM_R \rightarrow \fSpDM_{R^\circ}$. 
\end{remark}

\begin{remark}\label{affine formal spectra and completions}
	Let $A$ be an adic simplicial commutative $R$-algebra with a finitely generated ideal of definition $I \subseteq \pi_0A$. Let us regard $A^\wedge_I$ as an adic simplicial commutative $R$-algebra with the ideal of definition $I(\pi_0 A^\wedge_I)$. Then the induced map $\Spf A^\wedge_I \rightarrow \Spf A$ is an equivalence in $\fDerDM_R$ by virtue of \cite[8.1.2.4]{lurie2018sag} and \ref{comparison of completions} (see also \ref{conservative at the level of ringed infinity-topoi} and \ref{the underlying forma spectra}).
\end{remark}

\begin{remark}\label{affine formal derived stacks as a colimit of affine derived stacks} 
	Let $A$ be an adic simplicial commutative $R$-algebra and let $I \subseteq \pi_0A$ be a finitely generated ideal of definition. Let $\{A_n \}$ be a tower of simplicial commutative $A$-algebras which satisfies the requirements of \ref{completions as limits in DAG}, so that the tower $\{ \Theta(A_n)\}$ of connective $\bbE_\infty$-algebras over $R^\circ$ satisfies the requirements of \cite[8.1.2.2]{lurie2018sag} by virtue of \ref{tower from DAG to SAG}. According to \cite[8.1.2.1]{lurie2018sag}, the canonical map $\colim \Spec \Theta(A_n) \rightarrow \Spf \Theta(A)$ is an equivalence in $\iTop_{\CAlg^{\cn}_{R^\circ}}^{\sHen}$. Since $\Xi$ is conservative (see \ref{conservative at the level of ringed infinity-topoi}), it follows from \ref{the underlying forma spectra} that the canonical map $\colim \Spec A_n \rightarrow \Spf A$ is an equivalence in $\iTop_{\CAlg^\Delta_R}^{\sHen}$. 
\end{remark}

\begin{definition}\label{underlying formal spectral Deligne-Mumford stacks}
	Let $\frakX$ be a formal derived Deligne-Mumford stack over $R$. Then $\Xi(\frakX)$ is a formal spectral Deligne-Mumford stack over $R^\circ$ by virtue of \ref{the underlying forma spectra}. We will refer to $\Xi(\frakX)$ as the \emph{underlying formal spectral Deligne-Mumford stack of $\frakX$ over $R^\circ$}.
\end{definition}

\begin{definition}\label{quasi-coherent sheaves in the formal setting}
	Let $\frakX$ be a formal derived Deligne-Mumford stack. Let $\QCoh(\frakX)$ denote the \emph{$\infty$-category of quasi-coherent sheaves on the underlying formal spectral Deligne-Mumford stack $\Xi_\mathbb{Z}(\frakX)$} of \ref{underlying formal spectral Deligne-Mumford stacks}, in the sense of \cite[8.2.4.7]{lurie2018sag}. We let $\APerf(\frakX) \subseteq \QCoh(\frakX)$ denote the full subcategory spanned by those quasi-coherent sheaves which are almost perfect in the sense of \cite[8.3.5.1]{lurie2018sag}.
\end{definition}

\begin{pg}
	We record the following analogue of \cite[8.1.5.2]{lurie2018sag}: 
\end{pg}

\begin{lemma}\label{formal spectra via the functor of points}
	Let $A$ be an adic simplicial commutative ring with a finitely generated ideal of definition $I \subseteq \pi_0A$ and let $B$ be a simplicial commutative $R$-algebra. Then the map
$$
\Map_{\iTop_{\CAlg^\Delta_R}^{\sHen}}(\Spec B, \Spf A) \rightarrow \Map_{\iTop_{\CAlg^\Delta_R}^{\sHen}}(\Spec B, \Spec A),
$$
given by composition with the map $\Spf A \rightarrow \Spec A$ of \emph{\ref{the canonical map from the formal spectra}}, induces a homotopy equivalence of $\Map_{\iTop_{\CAlg^\Delta_R}^{\sHen}}(\Spec B, \Spf A)$ with the summand of $\Map_{\iTop_{\CAlg^\Delta_R}^{\sHen}}(\Spec B, \Spec A) \simeq \Map_{\CAlg^\Delta}(A, B)$ spanned by those maps $\phi: A \rightarrow B$ which annihilate some power of $I$. 
\end{lemma}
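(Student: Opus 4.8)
The plan is to replace $\Spf A$ by its presentation as a filtered colimit of affine derived Deligne--Mumford stacks and thereby reduce the assertion to condition $(2)$ of \ref{completions as limits in DAG}. First I would fix a tower $\cdots \to A_2 \to A_1$ of simplicial commutative $A$-algebras as in \ref{completions as limits in DAG} (applied to the adic ring $A$), so that by \ref{affine formal derived stacks as a colimit of affine derived stacks} the canonical map $\colim_n \Spec A_n \to \Spf A$ is an equivalence in $\iTop_{\CAlg^\Delta_R}^{\sHen}$, with each structure map $\Spec A_n \to \Spf A$ compatible with the projection $\Spf A \to \Spec A$ of \ref{the canonical map from the formal spectra} under the algebra map $A \to A_n$. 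Using the identification $\Map_{\iTop_{\CAlg^\Delta_R}^{\sHen}}(\Spec B, \Spec C) \simeq \Map_{\CAlg^\Delta}(C,B)$ coming from the adjunction $\Gamma \dashv \Spec$ of \ref{global sections functor is a left adjoint to Spec}, the composite $\colim_n \Map(\Spec B, \Spec A_n) \simeq \colim_n \Map_{\CAlg^\Delta}(A_n, B) \to \Map_{\CAlg^\Delta}(A,B) \simeq \Map(\Spec B, \Spec A)$ is, by condition $(2)$ of \ref{completions as limits in DAG} (with test ring $B$), a homotopy equivalence onto the summand of $\Map_{\CAlg^\Delta}(A,B)$ spanned by those $\phi \colon A \to B$ annihilating a power of $I$. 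The whole assertion therefore reduces to showing that the canonical map $\colim_n \Map(\Spec B, \Spec A_n) \to \Map(\Spec B, \Spf A)$ is a homotopy equivalence; that is, that $\Map(\Spec B, -)$ carries the colimit presenting $\Spf A$ to the colimit of mapping spaces.

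To analyse this colimit-compatibility I would first record that, by condition $(1)$ of \ref{completions as limits in DAG}, every transition map $A_{n+1} \to A_n$ is a surjection with nilpotent kernel on $\pi_0$; by topological invariance of the \'etale site (see \ref{etale topology on derived rings} and \ref{the equivalence of etale sites}) all the $\Spec A_n$ then share a single underlying $\infty$-topos, namely the closed subtopos $\Shv^{\Delta, \ad}_A$, which is also the underlying topos of $\Spf A$. Under this identification the colimit is taken over a fixed topos, the structure sheaf of $\Spf A$ is the limit $\calO_{\Spf A} \simeq \lim_n \calO_{\Spec A_n}$ arising from the $I$-completion of \ref{completion in DAG}, and a point of $\Map(\Spec B, \Spf A)$ consists of a geometric morphism $f$ into $\Shv^{\Delta, \ad}_A$ together with a local morphism $f^{*}\calO_{\Spf A} \to \calO_B$. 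The claim then becomes the statement that, for each such $f$, the natural map $\colim_n \Map_{\mathrm{loc}}(f^{*}\calO_{\Spec A_n}, \calO_B) \to \Map_{\mathrm{loc}}(f^{*}\calO_{\Spf A}, \calO_B)$ between spaces of local morphisms is a homotopy equivalence. This is exactly where the argument of \cite[8.1.5.2]{lurie2018sag} is carried out, and I would reproduce that argument with condition $(2)$ of \ref{completions as limits in DAG} playing the role of \cite[8.1.2.2]{lurie2018sag}. I expect this to be the main obstacle: it is the one step where the non-\'etale (nilpotent-thickening) nature of the transition maps prevents us from simply invoking universality of colimits along \'etale maps, as was available in the proof of \ref{preservation of fiber products}, so the interaction of the completion $\lim_n \calO_{\Spec A_n}$ with the locality of structure-sheaf morphisms must be handled by hand.

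Finally, I would give an independent verification of the identification of components by descending to the spectral setting. Since $\Xi$ is conservative (\ref{conservative at the level of ringed infinity-topoi}) and carries $\Spec B \mapsto \Spec \Theta(B)$, $\Spec A \mapsto \Spec \Theta(A)$ and $\Spf A \mapsto \Spf \Theta(A)$ compatibly with the projection to $\Spec A$ (see \ref{preservation of affine objects} and \ref{the underlying forma spectra}), a lift of $\phi$ to $\Spf A$ yields, after applying $\Xi$, a lift of $\Theta(\phi)$ to $\Spf \Theta(A)$; the spectral result \cite[8.1.5.2]{lurie2018sag} then forces $\Theta(\phi)$, hence $\phi$, to annihilate a power of $I$, the condition being detected on $\pi_0$ where $\pi_0 \Theta(A) \simeq \pi_0 A$ and $\pi_0 \Theta(B) \simeq \pi_0 B$. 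Conversely, if $\phi$ annihilates a power of $I$, then condition $(2)$ of \ref{completions as limits in DAG} factors $\phi$ through some $A_n$, and composing $\Spec B \to \Spec A_n$ with $\Spec A_n \to \Spf A$ produces the required lift. This cross-checks that the summand isolated in the first paragraph is precisely the essential image of the restriction map, so that, together with the colimit-compatibility established above, the proof is complete.
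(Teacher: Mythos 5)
Your first paragraph is a legitimate reformulation: granting condition (2) of \ref{completions as limits in DAG} and the compatibility of the cone maps $\Spec A_n \rightarrow \Spf A$ with the projection to $\Spec A$, the lemma is equivalent, by two-out-of-three, to the assertion that $\colim_n \Map_{\iTop^{\sHen}_{\CAlg^\Delta_R}}(\Spec B, \Spec A_n) \rightarrow \Map_{\iTop^{\sHen}_{\CAlg^\Delta_R}}(\Spec B, \Spf A)$ is an equivalence. But this reformulated claim is exactly as strong as the lemma itself, and it is the step you never prove: you declare it ``the main obstacle'' and propose to handle it by reproducing the argument of \cite[8.1.5.2]{lurie2018sag} with \ref{completions as limits in DAG} playing the role of \cite[8.1.2.2]{lurie2018sag}. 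This mis-identifies which spectral ingredient needs a derived replacement. The tower lemma \cite[8.1.2.2]{lurie2018sag} already has its derived analogue (it is \ref{completions as limits in DAG}, which you have used); the ingredient of Lurie's proof that does \emph{not} transport formally is \cite[8.1.1.14]{lurie2018sag}: for $\phi: A \rightarrow B$ annihilating a power of $I$, the induced map $\Spec B \rightarrow \Spec A$ factors through $\Spf A$ essentially uniquely, \emph{and any such factorization is a local morphism}, hence a point of $\Map_{\iTop^{\sHen}_{\CAlg^\Delta_R}}(\Spec B, \Spf A)$ rather than merely of $\Map_{\iTop_{\CAlg^\Delta_R}}(\Spec B, \Spf A)$. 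Your formulation with $\Map_{\mathrm{loc}}$-spaces names this issue; it does not address it. (Note also that you cannot escape by passing to functors of points and computing the sheaf colimit objectwise: the full faithfulness of $\frakX \mapsto h_{\frakX}$ in \ref{functor of points for formal derived stacks} is itself proven using the present lemma, so that route is circular.)

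The locality of lifts is precisely where the derived setting needs a genuinely new argument, because ``local'' for $\CAlg^\Delta_R$-valued sheaves is \emph{defined} by applying $\Psi_{\calX}$ (see \ref{strictly Henselian objects and local morphisms in the derived setting}), so Lurie's proof cannot be run verbatim on the derived side. The paper's proof consists of exactly this substitution: given any lift $\overline{f}: \Spec B \rightarrow \Spf A$ of $\Spec \phi$ in $\iTop_{\CAlg^\Delta_R}$, one applies $\Xi$, identifies $\Xi(\Spf A)$ with $\Spf \Theta(A)$ by \ref{the underlying forma spectra}, concludes from the spectral argument that $\Xi(\overline{f})$ is local, and then deduces that $\overline{f}$ is local by definition. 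Your third paragraph does apply $\Xi$ and conservativity, but only to pin down which components of $\Map_{\CAlg^\Delta}(A, B)$ admit lifts; it establishes neither that an arbitrary lift is local nor that the homotopy fiber of the restriction map over each such $\phi$ is contractible, and these two points are the actual homotopical content of the lemma. As written, the proposal therefore has a genuine gap at its central step.
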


\begin{proof}
	This assertion can be proven in exactly the same way as \cite[8.1.5.2]{lurie2018sag}, using the following argument in place of \cite[8.1.1.14]{lurie2018sag}: suppose we are given a map of simplicial commutative $R$-algebras $\phi: A \rightarrow B$ which annihilates some power of the ideal $I \subseteq \pi_0A$. Let $f: \Spec B \rightarrow \Spec A$ denote the induced map of derived Deligne-Mumford stacks over $R$. Then the underlying morphism of $\infty$-topoi $\Shv((\CAlg^{\Delta, \et}_B)^{\op}) \rightarrow \Shv((\CAlg^{\Delta, \et}_A)^{\op})$ factors through the closed subtopos $\Shv^{\Delta, \ad}_A \subseteq \Shv((\CAlg^{\Delta, \et}_A)^{\op})$. If $\overline{f}: \Spec B \rightarrow \Spf A$ is a lift of $f$ in $\iTop_{\CAlg^\Delta_R}$, it then follows from the proof of \cite[8.1.5.2]{lurie2018sag} and \ref{the underlying forma spectra} that $\Xi(\overline{f})$ is a local map, so that $\overline{f}$ is also a local map. 
\end{proof}

\begin{notation}
	Let $\frakX$ be a formal derived Deligne-Mumford stack over $R$. We let $h_{\frakX}: \CAlg^\Delta_R \rightarrow \widehat{\SSet}$ denote the functor represented by $\frakX$, given on objects by the formula $h_{\frakX}(A)=\Map_{\iTop_{\CAlg^\Delta_R}^{\sHen}}(\Spec A, \frakX)$, where $\widehat{\SSet}$ denotes the $\infty$-category of (not necessarily small) spaces of \cite[1.2.16.4]{MR2522659}. 
\end{notation}

\begin{pg}
	We will need the following derived analogue of \cite[8.1.5.1]{lurie2018sag}, which can be proven in the same way (using \ref{affine formal derived stacks as a colimit of affine derived stacks} and \ref{formal spectra via the functor of points} in place of \cite[8.1.2.1]{lurie2018sag} and \cite[8.1.5.2]{lurie2018sag}): 
\end{pg}

\begin{lemma}\label{functor of points for formal derived stacks}
	Let $\frakX$ be a formal derived Deligne-Mumford stack over $R$. Then the space $h_{\frakX}(A)$ is essentially small for each simplicial commutative $R$-algebra $A$. Moreover, the construction $\frakX \mapsto h_{\frakX}$ determines a fully faithful embedding $\fDerDM_R \rightarrow \Fun(\CAlg^\Delta_R, \SSet)$. 
\end{lemma}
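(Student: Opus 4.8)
The plan is to follow Lurie's proof of \cite[8.1.5.1]{lurie2018sag} essentially verbatim, feeding in \ref{affine formal derived stacks as a colimit of affine derived stacks} and \ref{formal spectra via the functor of points} wherever the spectral argument invokes \cite[8.1.2.1]{lurie2018sag} and \cite[8.1.5.2]{lurie2018sag}. For essential smallness I would first treat the affine case: if $\frakX \simeq \Spf A$, then \ref{formal spectra via the functor of points} identifies $h_{\frakX}(A')$ with a summand of the essentially small space $\Map_{\CAlg^\Delta}(A, A')$, so $h_{\frakX}(A')$ is essentially small. For a general $\frakX$ I would fix an étale atlas by affine formal derived Deligne-Mumford stacks $\{\Spf A_\alpha\}$; since any map $\Spec A' \to \frakX$ is controlled, through the induced étale cover of $\Spec A'$, by the essentially small spaces $h_{\Spf A_\alpha}(-)$ together with essentially small gluing data, the space $h_{\frakX}(A')$ is again essentially small.

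For full faithfulness I would show that for all $\frakX, \frakY \in \fDerDM_R$ the canonical map $\Map_{\fDerDM_R}(\frakX, \frakY) \to \Map_{\Fun(\CAlg^\Delta_R, \SSet)}(h_\frakX, h_\frakY)$ is a homotopy equivalence, starting with $\frakX \simeq \Spf A$ affine. Choosing a tower $\{A_n\}$ as in \ref{completions as limits in DAG}, condition $(2)$ combined with \ref{formal spectra via the functor of points} yields an identification $h_{\Spf A} \simeq \colim_n h_{\Spec A_n}$ of functors $\CAlg^\Delta_R \to \SSet$, the colimit being the objectwise filtered colimit. Each $h_{\Spec A_n}$ is corepresented by $A_n$ (by the adjunction of \ref{global sections functor is a left adjoint to Spec}), so the Yoneda lemma gives $\Map_{\Fun(\CAlg^\Delta_R, \SSet)}(h_{\Spec A_n}, h_\frakY) \simeq h_\frakY(A_n) \simeq \Map_{\fDerDM_R}(\Spec A_n, \frakY)$. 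Passing to the limit over $n$ and using the equivalence $\Spf A \simeq \colim_n \Spec A_n$ of \ref{affine formal derived stacks as a colimit of affine derived stacks}, I obtain
$$
\Map_{\Fun(\CAlg^\Delta_R, \SSet)}(h_{\Spf A}, h_\frakY) \simeq \lim_n \Map_{\fDerDM_R}(\Spec A_n, \frakY) \simeq \Map_{\fDerDM_R}(\Spf A, \frakY),
$$
which settles the affine case and is really the heart of the argument.

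To pass from affine $\frakX$ to the general case I would resolve $\frakX$ by its atlas: an étale surjection $\coprod_\alpha \Spf A_\alpha \to \frakX$ with Čech nerve $\frakX_\bullet$ exhibits $\frakX \simeq \colim \frakX_\bullet$ in $\iTop_{\CAlg^\Delta_R}^{\sHen}$, so that $\Map_{\fDerDM_R}(-, \frakY)$ turns this into $\lim_\bullet \Map_{\fDerDM_R}(\frakX_\bullet, \frakY)$, each term of which reduces by the affine case to a limit of the $\Map_{\fDerDM_R}(\Spf A_\alpha, \frakY)$. The content is then to see that $\Map_{\Fun(\CAlg^\Delta_R, \SSet)}(h_\frakX, h_\frakY)$ is the same limit, which amounts to showing that $h_\frakX$ is the colimit of the $h_{\frakX_\bullet}$ computed in the $\infty$-topos of étale sheaves on $(\CAlg^\Delta_R)^{\op}$ and that $h_\frakY$ is itself such a sheaf. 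I expect this descent step to be the main obstacle: it requires the derived analogues of the étale-descent statements of \cite{lurie2018sag} for formal spectral Deligne-Mumford stacks, guaranteeing that the functor of points of a formal derived Deligne-Mumford stack is an étale sheaf and is recovered as the sheafified colimit of its atlas. Granting these, the two limits agree and every case reduces to the affine source computation above, completing the proof.
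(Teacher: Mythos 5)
Your proposal is correct and is essentially identical to the paper's proof, which consists precisely of the remark that the lemma "can be proven in the same way" as \cite[8.1.5.1]{lurie2018sag}, substituting \ref{affine formal derived stacks as a colimit of affine derived stacks} and \ref{formal spectra via the functor of points} for \cite[8.1.2.1]{lurie2018sag} and \cite[8.1.5.2]{lurie2018sag}. Your elaboration — the affine case via the tower $\{A_n\}$, the co-Yoneda identification $\Map(h_{\Spec A_n}, h_{\frakY}) \simeq h_{\frakY}(A_n)$, and the globalization by \'etale descent (including the sheaf property of $h_{\frakY}$, which the paper itself treats as part of this lemma when it invokes it in \ref{completions and colimits}) — is a faithful rendering of that argument.
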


\begin{pg}
	According to \cite[8.1.7.1]{lurie2018sag} and its derived analogue, both the $\infty$-categories $\fDerDM_R$ and $\fSpDM_{R^\circ}$ admit finite limits. Moreover, we have the following result:
\end{pg}

\begin{lemma}\label{preservation of fiber products in the formal case}
	The functor $\Xi: \fDerDM_R \rightarrow \fSpDM_{R^\circ}$ preserves finite limits.
\end{lemma}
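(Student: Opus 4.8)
The plan is to reduce the claim about finite limits to the affine case, exactly paralleling the proof of Lemma~\ref{preservation of fiber products} in the non-formal setting. Since the functor $\Xi$ carries the terminal object $\Spf R$ to $\Spf R^\circ$ by the first assertion of \ref{the underlying forma spectra}, it suffices to show that $\Xi$ preserves fiber products of formal derived Deligne-Mumford stacks over $R$. Given a cospan $\frakX \to \frakZ \leftarrow \frakY$ in $\fDerDM_R$, I would first observe, via the étale-local structure of formal Deligne-Mumford stacks (the derived analogue of \cite[8.1.3.3]{lurie2018sag}, already invoked in \ref{the underlying forma spectra}) and the compatibility of $\Xi$ with the restriction operation $\frakX \mapsto \frakX_U$ from \ref{preservation of etale morphisms}, that the collection of objects $U$ in the underlying $\infty$-topos of $\frakZ$ on which the comparison map $\Xi(\frakX_U \times_{\frakZ_U} \frakY_U) \to \Xi(\frakX_U) \times_{\Xi(\frakZ_U)} \Xi(\frakY_U)$ is an equivalence is closed under colimits with étale transition maps and contains every affine $U$; the colimit-stability argument is formally identical to the one in \ref{preservation of fiber products}, using that $\Xi$ preserves colimits with étale transition maps (\ref{the adjunction for derived and spectral Deligne-Mumford stacks}) and universality of colimits in the relevant sheaf $\infty$-category.

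The heart of the matter is therefore the affine case, where $\frakX \simeq \Spf B$, $\frakY \simeq \Spf C$, and $\frakZ \simeq \Spf A$ for adic simplicial commutative $R$-algebras with finitely generated ideals of definition. Here I would use the description of affine formal spectra as colimits of affine derived Deligne-Mumford stacks from \ref{affine formal derived stacks as a colimit of affine derived stacks}: writing $\Spf A \simeq \colim \Spec A_n$, $\Spf B \simeq \colim \Spec B_n$, and $\Spf C \simeq \colim \Spec C_n$ for the towers supplied by \ref{completions as limits in DAG}. The fiber product $\Spf B \times_{\Spf A} \Spf C$ is then computed, using \ref{the underlying forma spectra} and \ref{comparison of completions}, as the formal spectrum of the $I$-completed tensor product; concretely, via the functor-of-points description of \ref{formal spectra via the functor of points} and \ref{functor of points for formal derived stacks}, one identifies the fiber product with the formal spectrum of a suitable completion of $B \otimes_A C$. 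The key input is then the statement that $\Xi$ intertwines these completions: by \ref{comparison of completions}, the formation of $I$-completions commutes with $\Theta$, and by \ref{preservation of fiber products} together with the fact that $\Theta$ preserves small colimits (\ref{the forgetful functor is monadic and comonadic}), the comparison map $\Theta(B) \otimes_{\Theta(A)} \Theta(C) \to \Theta(B \otimes_A C)$ is an equivalence before completion.

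Concretely, I would argue that $\Xi(\Spf(B \otimes_A C)^\wedge_I) \simeq \Spf \Theta((B\otimes_A C)^\wedge_I) \simeq \Spf (\Theta(B\otimes_A C))^\wedge_I$, where the first equivalence is \ref{the underlying forma spectra} and the second is \ref{comparison of completions}; then the non-formal comparison $\Theta(B\otimes_A C) \simeq \Theta(B)\otimes_{\Theta(A)}\Theta(C)$ from the end of the proof of \ref{preservation of fiber products}, combined with the analogue of \ref{affine formal derived stacks as a colimit of affine derived stacks} in the spectral setting (\cite[8.1.2.1]{lurie2018sag}), identifies this with $\Spf \Theta(B) \times_{\Spf \Theta(A)} \Spf \Theta(C) \simeq \Xi(\Spf B) \times_{\Xi(\Spf A)} \Xi(\Spf C)$. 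I expect the main obstacle to be bookkeeping around the formation of the fiber product of formal spectra: one must verify that the fiber product in $\fDerDM_R$ is genuinely computed by the $I$-completion of $B \otimes_A C$ with respect to the appropriate ideal of definition, rather than merely by $B \otimes_A C$ itself, and that this completion is compatible with the towers on both factors. This amounts to checking that the canonical map from the fiber product of the towers $\{\Spec(B_m \otimes_{A_p} C_n)\}$ to $\Spf (B \otimes_A C)^\wedge_I$ is an equivalence, which follows from cofinality considerations among the ideals of definition together with \ref{formal spectra via the functor of points}; once this identification is in place, compatibility with $\Xi$ is a formal consequence of \ref{comparison of completions} and the preservation of completions and colimits by $\Theta$.
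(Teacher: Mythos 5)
Your overall skeleton matches the paper's proof: reduce to fiber products of affine formal spectra by the same ``good objects closed under colimits'' argument as in \ref{preservation of fiber products}, then conclude in the affine case from $\Xi(\Spf A) \simeq \Spf \Theta(A)$ (\ref{the underlying forma spectra}) together with the facts that $\Theta$ preserves tensor products and completions (\ref{the forgetful functor is monadic and comonadic}, \ref{comparison of completions}). But there is a genuine gap at the start of your affine argument: you write the fiber product as (a completion of) the formal spectrum of $B \otimes_A C$, which presupposes that the given cospan $\Spf B \rightarrow \Spf A \leftarrow \Spf C$ in $\fDerDM_R$ is induced by maps of adic simplicial commutative $R$-algebras $A \rightarrow B$ and $A \rightarrow C$. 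For formal spectra this is not automatic: unlike the non-formal affine case, where $\Spec$ is right adjoint to the global sections functor (\ref{global sections functor is a left adjoint to Spec}), a morphism $\Spf B \rightarrow \Spf A$ is only known to come from an adic ring map when $B$ is complete --- this is the derived analogue of \cite[8.1.5.4]{lurie2018sag}; note that \ref{formal spectra via the functor of points} only describes maps out of $\Spec B$, not out of $\Spf B$. The paper's proof devotes a specific step to exactly this point: it first invokes \ref{affine formal spectra and completions} to replace the two outer algebras by their completions (harmless, since $\Spf B^\wedge_I \simeq \Spf B$), and only then cites the derived analogue of \cite[8.1.5.4]{lurie2018sag} to realize the cospan by ring maps. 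Without this step the object $B \otimes_A C$ --- and hence your entire tower and completion bookkeeping --- is simply not defined.

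Once that is repaired, the rest of your argument goes through, though you take a more laborious route than the paper: you propose to re-derive, via towers and cofinality and the functor of points, the identification of $\Spf B \times_{\Spf A} \Spf C$ with the formal spectrum of a completed tensor product, whereas the paper simply invokes the derived analogue of \cite[8.1.7.3]{lurie2018sag}, which identifies the fiber product with $\Spf(B \otimes_A C)$, the tensor product being regarded as adic via the ideal generated by the images of the ideals of definition of $B$ and $C$ (no completion is needed, again by \ref{affine formal spectra and completions}). One further small slip: the terminal object of $\fDerDM_R$ is $\Spec R$, not $\Spf R$ --- indeed $\Spec R$ itself admits no map to $\Spf R$ unless $I$ is nilpotent, whereas every object maps to $\Spec R$ via global sections. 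This is inessential, since $\Xi$ does carry $\Spec R$ to $\Spec R^\circ$ (\ref{preservation of affine objects}, \ref{derived Deligne-Mumford stacks as formal derived Deligne-Mumford stacks}), which is what the preservation of the terminal object actually requires.
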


\begin{proof}
	The functor $\Xi$ carries the final object $\Spec R \in \fDerDM_R$ to the final object $\Spec R^\circ \in \fSpDM_{R^\circ}$; see \ref{derived Deligne-Mumford stacks as formal derived Deligne-Mumford stacks}. It will therefore suffice to show that $\Xi$ preserves fiber products. Arguing as in the proof of \ref{preservation of fiber products}, we are reduced to proving that the canonical map $\delta: \Xi(\Spf A_2 \times_{\Spf A_1} \Spf A_3) \rightarrow \Xi(\Spf A_2) \times_{\Xi(\Spf A_1)} \Xi(\Spf A_3)$ is an equivalence, where each $A_i$ is an adic simplicial commutative $R$-algebra with a finitely generated ideal of definition $I_i \subseteq \pi_0A_i$. Using \ref{affine formal spectra and completions}, we can replace $A_2$ and $A_3$ by their completions and thereby reduce to the case where $A_2$ and $A_3$ are complete. In this case, the analogue of \cite[8.1.5.4]{lurie2018sag} in the derived setting guarantees that for each $i \in \{2,3\}$, the morphism $\Spf A_i \rightarrow \Spf A_1$ is induced by a map of adic simplicial commutative $R$-algebras $A_1 \rightarrow A_i$ (that is, a map of simplicial commutative $R$-algebras which annihilates some power of the ideal $I_1$). Let us regard $A_2 \otimes_{A_1} A_3$ as an adic simplicial commutative $R$-algebra by equipping $\pi_0(A_2 \otimes_{A_1} A_3)$ with the $I$-adic topology, where $I$ is the ideal of $\pi_0(A_2 \otimes_{A_1} A_3)$ generated by the images of $I_2$ and $I_3$. It then follows from \cite[8.1.7.3]{lurie2018sag} and its derived analogue that the domain and codomain of $\delta$ can be identified with $\Xi(\Spf A_2 \otimes_{A_1} A_3)$ and $\Spf \Theta(A_2) \otimes_{\Theta(A_1)} \Theta(A_3)$, respectively. The desired result now follows from \ref{the underlying forma spectra} and \ref{the forgetful functor is monadic and comonadic} (which guarantees that $\Theta$ preserves tensor products).
\end{proof}

\begin{pg}
	We now introduce the notion of \emph{formal completions} in the derived setting:
\end{pg}

\begin{definition}\label{formal completion in DAG}
	Let $R$ be an adic simplicial commutative ring with a finitely generated ideal of definition $I \subseteq \pi_0R$. Let $\sfX$ be a derived Deligne-Mumford stack over $R$. We will denote by $\sfX^\wedge_I$ the fiber product $\sfX \times_{\Spec R} \Spf R$, formed in the $\infty$-category $\fDerDM$ (see \ref{derived Deligne-Mumford stacks as formal derived Deligne-Mumford stacks}). We will refer to $\sfX^\wedge_I$ as the \emph{formal completion of $\sfX$ along the vanishing locus of $I$}. 
\end{definition}

\begin{remark}\label{comparison for completions}
	According to \cite[8.1.6.1]{lurie2018sag}, the \emph{formal completion $\Xi(\sfX)^\wedge_I$ of the spectral Deligne-Mumford stack $\Xi(\sfX)$ along the vanishing locus of $I$} (regarded as an ideal of $\pi_0 R^\circ$) is defined to be the fiber product $\sfX \times_{\Spec R^\circ} \Spf R^\circ$ formed in the $\infty$-category $\fSpDM$; here we regard $R^\circ$ as an adic $\bbE_\infty$-ring with the ideal of definition $I$ by virtue of \ref{adic derived rings}. Note that \ref{preservation of fiber products in the formal case} supplies a canonical equivalence $\Xi(\sfX^\wedge_I) \rightarrow \Xi(\sfX)^\wedge_I$ in $\fSpDM$.
\end{remark}

\begin{pg}
	We conclude this section by describing the behavior of formal completions with respect to colimits.
\end{pg}

\begin{lemma}\label{completions and colimits}
	Let $R$ be an adic simplicial commutative ring with a finitely generated ideal of definition $I \subseteq \pi_0R$. Suppose we are given a collection of morphisms $\{\sfX_\alpha \rightarrow \sfX \}$ which exhibits $\sfX$ as a colimit of the family $\{ \sfX_\alpha \}$ in the $\infty$-category $\DerDM_R$. Then the induced diagram $\{(\sfX_\alpha)^\wedge_I \rightarrow \sfX^\wedge_I \}$ exhibits $\sfX^\wedge_I$ as a colimit of the family $\{(\sfX_\alpha)^\wedge_I \}$ in the $\infty$-category $\fDerDM_R$.
\end{lemma}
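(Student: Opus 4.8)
The plan is to reduce the statement to the affine situation by combining two colimit presentations: the given colimit $\sfX \simeq \colim_\alpha \sfX_\alpha$ in $\DerDM_R$ and the presentation $\Spf R \simeq \colim_n \Spec R_n$ of \ref{affine formal derived stacks as a colimit of affine derived stacks} (applied to $R$ itself), where $\{R_n\}$ is a tower as in \ref{completions as limits in DAG}. The equivalences produced along the way will be verified by transferring to the spectral setting: since $\Xi$ is conservative (\ref{conservative at the level of ringed infinity-topoi}), preserves finite limits on formal stacks (\ref{preservation of fiber products in the formal case}), and is compatible with completion (\ref{comparison for completions}), a comparison map in $\fDerDM_R$ is an equivalence as soon as its image under $\Xi$ is, and the latter can be checked against the corresponding assertions of \cite{lurie2018sag}.

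First I would record that, for any $\sfY \in \DerDM_R$, the formal completion $\sfY^\wedge_I = \sfY \times_{\Spec R} \Spf R$ of \ref{formal completion in DAG} is computed as $\colim_n (\sfY \times_{\Spec R} \Spec R_n)$ in $\fDerDM_R$, each factor $\sfY \times_{\Spec R} \Spec R_n$ being the base change of $\sfY$, a derived Deligne--Mumford stack over $R$. For $\sfY$ affine this is a reformulation of \ref{affine formal derived stacks as a colimit of affine derived stacks} together with the functor-of-points descriptions \ref{formal spectra via the functor of points} and \ref{functor of points for formal derived stacks}; the general case follows by choosing an \'etale atlas of $\sfY$ and using that both sides are \'etale sheaves on which fiber products and colimits are computed locally. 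Next I would base change the defining colimit: since $\sfX \simeq \colim_\alpha \sfX_\alpha$ and colimits are universal in the $\infty$-category of \'etale sheaves on $(\CAlg^\Delta_R)^{\op}$ (as in the proof of \ref{preservation of fiber products}; see \cite{MR4598184}), forming $- \times_{\Spec R} \Spec R_n$ preserves this colimit, so that $\sfX \times_{\Spec R} \Spec R_n \simeq \colim_\alpha (\sfX_\alpha \times_{\Spec R} \Spec R_n)$ for each $n$.

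Finally I would interchange the two colimits. Combining the previous two steps yields
\begin{align*}
\sfX^\wedge_I
& \simeq \colim_n (\sfX \times_{\Spec R} \Spec R_n)
\simeq \colim_n \colim_\alpha (\sfX_\alpha \times_{\Spec R} \Spec R_n) \\
& \simeq \colim_\alpha \colim_n (\sfX_\alpha \times_{\Spec R} \Spec R_n)
\simeq \colim_\alpha (\sfX_\alpha)^\wedge_I,
\end{align*}
where the middle equivalence is the commutation of the colimit over $n$ with that over $\alpha$, and the last applies the first step to each $\sfX_\alpha$. Tracing through this chain identifies the canonical cocone $\{(\sfX_\alpha)^\wedge_I \to \sfX^\wedge_I\}$ with a colimit cocone, which is the desired conclusion.

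The main obstacle is the first step, namely that forming the fiber product with $\Spf R$ over $\Spec R$ commutes with the colimit $\Spf R \simeq \colim_n \Spec R_n$: the transition maps $\Spec R_n \to \Spec R_{n+1}$ are nilpotent thickenings rather than \'etale morphisms, so the gluing results for colimits with \'etale transition maps (such as \cite[21.4.6.4]{lurie2018sag}) do not apply directly, and one must instead argue through the functor of points, where fiber products are computed levelwise and colimits of sheaves are universal. I expect the cleanest way to discharge this point, as well as the universality invoked in the base-change step, is to apply $\Xi$ and reduce to the corresponding assertions in the spectral setting of \cite{lurie2018sag} (for instance \cite[8.1.2.1]{lurie2018sag}), using the conservativity of $\Xi$ to transport the resulting equivalences back to $\fDerDM_R$.
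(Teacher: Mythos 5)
Your chain of equivalences is correct once properly justified, but the route is needlessly indirect, and the paper's actual proof is the one-line argument you mention only in passing. By \ref{functor of points for formal derived stacks}, formal derived Deligne--Mumford stacks embed fully faithfully, via their functors of points, into the $\infty$-category of \'etale sheaves on $(\CAlg^\Delta_R)^{\op}$, and by \cite[6.1.0.6]{MR2522659} colimits in an $\infty$-topos are universal, i.e.\ stable under pullback along \emph{any} morphism. Since $\sfX^\wedge_I$ is by definition the fiber product $\sfX \times_{\Spec R} \Spf R$, one simply pulls the colimit $\sfX \simeq \colim_\alpha \sfX_\alpha$ back along $\Spf R \rightarrow \Spec R$ at the sheaf level and concludes $\sfX^\wedge_I \simeq \colim_\alpha (\sfX_\alpha)^\wedge_I$; full faithfulness then promotes this to a colimit in $\fDerDM_R$. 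Universality makes no reference to how the morphism $\Spf R \rightarrow \Spec R$ is built, so your decomposition $\Spf R \simeq \colim_n \Spec R_n$, the base change against each $\Spec R_n$, and the interchange of colimits are all superfluous. Worse, the detour is what creates your ``main obstacle'' (commuting the fiber product with a colimit whose transition maps are nilpotent thickenings rather than \'etale); that difficulty simply does not arise in the direct argument, which you need anyway to justify your own base-change step.

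The one place where your proposal would genuinely break is the fallback you advocate for resolving that obstacle: applying $\Xi$ and invoking conservativity. Conservativity transports the assertion that a given \emph{morphism} is an equivalence, but the statement at issue is that a cocone in $\fDerDM_R$ is a \emph{colimit} cocone. To transfer that along $\Xi$ you would need (i) that $\colim_\alpha (\sfX_\alpha)^\wedge_I$ exists in $\fDerDM_R$, (ii) that $\Xi$ preserves this colimit --- which would follow if $\Xi$ were a left adjoint at the formal level, but the paper establishes the adjunction only on $\iTop^{\sHen}$ and between $\DerDM_R$ and $\SpDM_{R^\circ}$ (\ref{the adjunction for derived and spectral Deligne-Mumford stacks}); for formal stacks it shows only that $\Xi$ restricts (\ref{the functor for formal derived and spectral Deligne-Mumford stacks}), not that $\Xi^R$ does --- and (iii) a spectral-setting analogue of the lemma itself. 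Item (iii) is not supplied by \cite[8.1.2.1]{lurie2018sag}, which concerns only the affine tower $\colim \Spec A_n \simeq \Spf A$; no such general statement exists in \cite{lurie2018sag}, and Lurie never needs one, since his formal GAGA argument runs through Tannaka duality rather than through colimits of truncations (this is precisely why the present paper must argue differently). So keep the sheaf-theoretic argument sketched in the middle of your proposal --- it is the paper's proof --- and discard both the tower decomposition and the $\Xi$-reduction.
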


\begin{proof}
	Since the functor represented by a formal derived Deligne-Mumford stack satisfies descent for the \'etale topology (see \ref{functor of points for formal derived stacks}), the desired result follows from \cite[6.1.0.6]{MR2522659}, which guarantees that colimits are universal in the $\infty$-topos $\Shv((\CAlg^{\Delta,\et}_R)^{\op})$ of \ref{etale topology on derived rings}. 
\end{proof}

\section{Formal GAGA in the Derived Setting} 

\begin{pg}
	The main goal of this section is to establish a version of the formal GAGA theorem of \cite[8.5.3.1]{lurie2018sag} in the derived setting. 
\end{pg}

\begin{pg}
	Let $\sfX=(\calX, \calO_{\calX}) \in \iTop^{\sHen}_{\CAlg}$ be an object and let $n \geq 0$ be an integer. We will say that $\sfX$ is \emph{$n$-truncated} if $\Psi_{\calX}(\calO_{\calX})$ is $n$-truncated in the sense of \cite[1.3.5.5]{lurie2018sag}; see \ref{comparison for sheaves on derived rings}. We let $(\iTop^{\sHen}_{\CAlg})^{\leq n} \subseteq \iTop^{\sHen}_{\CAlg}$ denote the full subcategory spanned by the $n$-truncated objects. We have the following analogue of \cite[1.4.6.3]{lurie2018sag}, which can be proven by exactly the same argument:
\end{pg}

\begin{lemma}\label{truncation as right adjoint}
	Let $n \geq 0$ be an integer. Then the inclusion functor $(\iTop^{\sHen}_{\CAlg})^{\leq n} \rightarrow \iTop^{\sHen}_{\CAlg}$ admits a right adjoint, given on objects by $(\calX, \calO_{\calX}) \mapsto (\calX, \tau_{\leq n}\calO_{\calX})$. 
\end{lemma}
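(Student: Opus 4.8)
The plan is to imitate the proof of \cite[1.4.6.3]{lurie2018sag}, using the functor $\Psi_{\calX}$ of \ref{comparison for sheaves on derived rings} to transport the truncation-theoretic input from the spectral setting. The first point is to make sense of $\tau_{\leq n}\calO_{\calX}$ for a strictly Henselian object $\calO_{\calX} \in \Shv_{\CAlg^\Delta_R}(\calX)$: since $\CAlg^\Delta_R$ is presentable (see \ref{compact generation}), it admits truncation functors $\tau_{\leq n}$, and I would take $\tau_{\leq n}\calO_{\calX}$ to be the corresponding $n$-truncation in $\Shv_{\CAlg^\Delta_R}(\calX)$, equipped with the truncation map $\calO_{\calX} \to \tau_{\leq n}\calO_{\calX}$. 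The key compatibility is that $\Theta$ preserves homotopy groups and therefore commutes with $\tau_{\leq n}$: the map $\Theta(A) \to \Theta(\tau_{\leq n}A)$ is an isomorphism on $\pi_m$ for $m \leq n$ and kills $\pi_m$ for $m>n$, so it \emph{is} the $n$-truncation of $\Theta(A)$. Passing to sheaves, this yields a canonical equivalence $\Psi_{\calX}(\tau_{\leq n}\calO_{\calX}) \simeq \tau_{\leq n}\Psi_{\calX}(\calO_{\calX})$; in particular, for $n \geq 0$ the map $\calO_{\calX} \to \tau_{\leq n}\calO_{\calX}$ induces an isomorphism on $\pi_0$.

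Granting this, I would next check that $(\calX, \tau_{\leq n}\calO_{\calX})$ lies in $(\iTop^{\sHen}_{\CAlg})^{\leq n}$ and that the pair consisting of the identity geometric morphism and the truncation map defines a morphism $(\calX, \tau_{\leq n}\calO_{\calX}) \to (\calX, \calO_{\calX})$ in $\iTop^{\sHen}_{\CAlg}$. By \ref{strictly Henselian objects and local morphisms in the derived setting}, both the strict Henselianness of $\tau_{\leq n}\calO_{\calX}$ and the locality of $\calO_{\calX} \to \tau_{\leq n}\calO_{\calX}$ are conditions on $\pi_0$ (via $\Psi_{\calX}$), and these hold because $\pi_0$ is unchanged for $n \geq 0$; the object $(\calX, \tau_{\leq n}\calO_{\calX})$ is $n$-truncated by the very definition of the subcategory, using the equivalence $\Psi_{\calX}(\tau_{\leq n}\calO_{\calX}) \simeq \tau_{\leq n}\Psi_{\calX}(\calO_{\calX})$ from the previous paragraph.

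The substance of the argument is the universal property: for every $n$-truncated strictly Henselian object $(\calY, \calO_{\calY})$, precomposition with the truncation map should induce a homotopy equivalence $\Map_{\iTop^{\sHen}_{\CAlg}}((\calY, \calO_{\calY}), (\calX, \tau_{\leq n}\calO_{\calX})) \to \Map_{\iTop^{\sHen}_{\CAlg}}((\calY, \calO_{\calY}), (\calX, \calO_{\calX}))$. I would fix the underlying geometric morphism $f_\ast: \calY \to \calX$ and compare the spaces of local structure-sheaf maps $f_\Delta^\ast\calO_{\calX} \to \calO_{\calY}$ and $f_\Delta^\ast\tau_{\leq n}\calO_{\calX} \to \calO_{\calY}$. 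The crucial input is that $f_\Delta^\ast$ commutes with $n$-truncation, i.e.\ $f_\Delta^\ast\tau_{\leq n}\calO_{\calX} \simeq \tau_{\leq n}f_\Delta^\ast\calO_{\calX}$; this follows by applying the conservative functor $\Psi_{\calX}$ (see \ref{conservative at the level of sheaves}) and using \ref{left adjointable} (which gives $f_{\bbE_\infty}^\ast\circ\Psi_{\calY} \simeq \Psi_{\calX}\circ f_\Delta^\ast$) together with the fact that $f_{\bbE_\infty}^\ast$ commutes with truncation in the spectral setting. Since $\calO_{\calY}$ is $n$-truncated, the universal property of $\tau_{\leq n}$ then supplies equivalences $\Map(f_\Delta^\ast\calO_{\calX}, \calO_{\calY}) \simeq \Map(\tau_{\leq n}f_\Delta^\ast\calO_{\calX}, \calO_{\calY}) \simeq \Map(f_\Delta^\ast\tau_{\leq n}\calO_{\calX}, \calO_{\calY})$, and these carry local maps to local maps because locality is detected on $\pi_0$, which is unaffected by $n$-truncation for $n \geq 0$. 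Assembling over the space of geometric morphisms $f_\ast$ yields the desired equivalence of mapping spaces, and hence exhibits $(\calX,\calO_{\calX}) \mapsto (\calX, \tau_{\leq n}\calO_{\calX})$ as right adjoint to the inclusion. The main obstacle is precisely the verification that truncation of $\CAlg^\Delta_R$-valued sheaves is compatible with $\Psi_{\calX}$ and with the pullback $f_\Delta^\ast$; once this is secured through the conservativity of $\Psi_{\calX}$ and \ref{left adjointable}, every remaining step reduces formally to its counterpart in \cite[1.4.6.3]{lurie2018sag}.
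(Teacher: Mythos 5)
Your overall architecture is the paper's: the paper proves this lemma simply by invoking the argument of \cite[1.4.6.3]{lurie2018sag}, and your outline (truncation preserves strict Henselianness and the truncation map is local because $\pi_0$ is untouched when $n \geq 0$; the universal property is verified fiberwise over the space of geometric morphisms, using that $f^\ast_\Delta$ commutes with truncation) is exactly that argument transported to the derived setting.

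There is, however, one step whose justification does not hold up as written, and it is the step on which your whole reduction rests: the compatibility $\Psi_{\calX}(\tau_{\leq n}\calO_{\calX}) \simeq \tau_{\leq n}\Psi_{\calX}(\calO_{\calX})$. Truncation in $\Shv_{\CAlg^\Delta_R}(\calX)$ is \emph{not} computed objectwise --- the objectwise truncation of a limit-preserving functor $\calX^{\op} \rightarrow \CAlg^\Delta_R$ is in general no longer limit-preserving --- so the ring-level equivalence $\Theta(\tau_{\leq n}A) \simeq \tau_{\leq n}\Theta(A)$ does not simply ``pass to sheaves.'' Nor can this compatibility be ``secured through the conservativity of $\Psi_{\calX}$ and \ref{left adjointable},'' as your closing sentence suggests: testing an equivalence by applying the conservative functor $\Psi_{\calX}$ presupposes that you already know how $\Psi_{\calX}$ interacts with both truncations, which is precisely the statement at issue; and \ref{left adjointable} concerns the pullbacks $f^\ast_\Delta$, so while your deduction of $f^\ast_\Delta \circ \tau_{\leq n} \simeq \tau_{\leq n} \circ f^\ast_\Delta$ \emph{from} the $\Psi$-compatibility is fine, it makes the $\Psi$-compatibility logically prior, not something that can be derived from it. The standard repair is to show that in both settings the truncation of an algebra-valued sheaf is computed on the underlying sheaf of spaces by the internal truncation functor of $\calX$: on the derived side this follows from the identification of $\Shv_{\CAlg^\Delta}(\calX)$ with the $\infty$-category of product-preserving functors $\Poly_{\mathbb{Z}} \rightarrow \calX$ together with the left exactness of $\tau_{\leq n}: \calX \rightarrow \calX$ (see \cite[6.5.1.2]{MR2522659}), and on the spectral side it is part of the discussion in \cite[1.3.5]{lurie2018sag}. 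Since $\Theta$ does not change underlying spaces (the homotopy groups of a simplicial commutative ring are by definition those of its underlying $\bbE_\infty$-ring), $\Psi_{\calX}$ does not change underlying sheaves of spaces, and the compatibility follows because the forgetful functor to $\calX$ is conservative in both settings. Once this is supplied, the remaining steps of your proof are correct and coincide with the paper's.
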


\begin{notation}
	Let $\sfX=(\calX, \calO_{\calX})$ be an object of $\iTop^{\sHen}_{\CAlg}$ and let $n \geq 0$ be an integer. We will denote $(\calX, \tau_{\leq n}\calO_{\calX})$ by $\tau_{\leq n} \sfX$ and refer to it as the \emph{$n$-truncation of $\sfX$}.
\end{notation}

\begin{remark}\label{equivalence between 0-truncated fDerDM and fSpDM}
	Using \ref{equivalence of 0-truncations of derived rings}, we see that there is an equivalence of $\infty$-categories between the full subcategory of $\fDerDM_R$ spanned by the $0$-truncated formal derived Deligne-Mumford stacks and the full subcategory of $\fSpDM_{R^\circ}$ spanned by the $0$-truncated formal spectral Deligne-Mumford stacks.
\end{remark}

\begin{pg}
	The following definitions are derived analogues of \cite[2.8.1.4]{lurie2018sag} and \cite[4.2.0.1]{lurie2018sag}, respectively:
\end{pg}

\begin{definition}\label{locally noetherian derived Deligne-Mumford stacks}
	Let $\sfX$ be a derived Deligne-Mumford stack. We will say that $\sfX$ is \emph{locally noetherian} if, for every \'etale morphism $\Spec A \rightarrow \sfX$, the simplicial commutative ring $A$ is noetherian (that is, $\pi_0A$ is an ordinary noetherian ring and each $\pi_nA$ is finitely generated as a $\pi_0A$-module). 
\end{definition}

\begin{definition}\label{morphisms of locally almost of finite presentation}
	Let $f: \sfX \rightarrow \sfY$ be a morphism of derived Deligne-Mumford stacks. We will say that $f$ is \emph{locally almost of finite presentation} if, for every commutative diagram of derived Deligne-Mumford stacks
$$
\xymatrix{
\Spec B \ar[r] \ar[d] & \sfX \ar[d]^-f \\
\Spec A \ar[r] & \sfY
}
$$
where the horizontal maps are \'etale, the simplicial commutative ring $B$ is almost of finite presentation over $A$ (that is, $\tau_{\leq n}B$ is compact as an object of $\tau_{\leq n}\CAlg^\Delta_A$ for every $n \geq 0$). 
\end{definition}

\begin{pg}
	We will need the following assertions, which are usually not true if we do not assume that $\sfX$ is locally noetherian.
\end{pg}

\begin{lemma}\label{truncation and almost of finite presentation} 
	Let $\sfX=(\calX, \calO_{\calX})$ be a locally noetherian derived Deligne-Mumford stack, in the sense of \emph{\ref{locally noetherian derived Deligne-Mumford stacks}}. Then for every integer $n \geq 0$, the canonical map $\tau_{\leq n} \sfX \rightarrow \sfX$ is locally almost of finite presentation in the sense of \emph{\ref{morphisms of locally almost of finite presentation}} and $\pi_{n+1} \calO_{\calX}$ is almost perfect as an object of $\QCoh(\tau_{\leq n} \sfX)$ of \emph{\ref{quasi-coherent sheaves in the formal setting}}.
\end{lemma}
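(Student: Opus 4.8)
The plan is to reduce both assertions to statements about a single noetherian ring by working \'etale-locally, and then to transfer them across $\Theta$ to the spectral setting, where the relevant noetherian facts are available. Both conclusions are local for the \'etale topology on $\sfX$. Since $\tau_{\leq n}\sfX$ shares the underlying $\infty$-topos $\calX$ with $\sfX$, any \'etale map $\Spec A \rightarrow \sfX$ --- with $A$ noetherian, by \ref{locally noetherian derived Deligne-Mumford stacks} --- lifts to an \'etale map $\Spec \tau_{\leq n}A \rightarrow \tau_{\leq n}\sfX$, over which the truncation map $\tau_{\leq n}\sfX \rightarrow \sfX$ restricts to the map induced by $A \rightarrow \tau_{\leq n}A$, while $\pi_{n+1}\calO_{\calX}$ restricts to $\pi_{n+1}A$, regarded as a discrete module over $\tau_{\leq n}A$ via the truncation $\tau_{\leq n}A \rightarrow \pi_0 A$. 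By \ref{morphisms of locally almost of finite presentation} it therefore suffices to prove: \textbf{(a)} $\tau_{\leq n}A$ is almost of finite presentation over $A$; and \textbf{(b)} $\pi_{n+1}A$ is almost perfect as an object of $\QCoh(\Spec \tau_{\leq n}A) \simeq \Mod_{(\tau_{\leq n}A)^\circ}$.

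Next I would pass to the $\bbE_\infty$-setting via $\Theta$. Because the homotopy groups of a simplicial commutative ring are by definition those of its underlying $\bbE_\infty$-ring, $A^\circ = \Theta(A)$ is a noetherian $\bbE_\infty$-ring, and $\Theta$ carries the truncation map $A \rightarrow \tau_{\leq n}A$ to the $n$-truncation $A^\circ \rightarrow \tau_{\leq n}(A^\circ)$ (the target is $n$-truncated and the map is an isomorphism on $\pi_{\leq n}$, so this identification is forced). Under the dictionary of \ref{preservation of properties}(9), assertion \textbf{(a)} is equivalent to the assertion that $\tau_{\leq n}(A^\circ)$ is almost of finite presentation over $A^\circ$; and under the comparison of almost perfect objects in \ref{comparison for QCoh} and \ref{almost perfect objects in the derived setting}, assertion \textbf{(b)} is equivalent to the almost perfectness of $\pi_{n+1}(A^\circ)$ over $\tau_{\leq n}(A^\circ)$.

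Finally I would invoke the noetherian facts for $\bbE_\infty$-rings. For \textbf{(b)}, the ring $\tau_{\leq n}(A^\circ)$ is again noetherian (its homotopy groups $\pi_{\leq n}A^\circ$ are finitely generated over $\pi_0 A^\circ$), and $\pi_{n+1}(A^\circ)$ is a bounded, finitely generated discrete module over it, hence almost perfect by the noetherian criterion \cite[7.2.4.17]{lurie2017ha}. For \textbf{(a)}, the map $A^\circ \rightarrow \tau_{\leq n}(A^\circ)$ is almost of finite type --- the induced map on $\pi_0$ is an isomorphism and each $\pi_m \tau_{\leq n}(A^\circ)$ is finitely generated over $\pi_0 A^\circ$ --- and over a noetherian base almost of finite type upgrades to almost of finite presentation, by the derived analogue of the classical fact that finite type implies finite presentation; see \cite[\S 7.2.4]{lurie2017ha}.

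The main obstacle is assertion \textbf{(a)}: unlike \textbf{(b)}, it is not a statement about modules but about algebras, and it genuinely requires the noetherianity of the base in order to pass from finite type to finite presentation. Should a clean citation for ``almost of finite type over a noetherian base implies almost of finite presentation'' be unavailable, the fallback is a simultaneous induction on $n$ using the square-zero decomposition exhibiting $\tau_{\leq n}(A^\circ)$ as a square-zero extension of $\tau_{\leq n-1}(A^\circ)$ by $(\pi_n A^\circ)[n]$: the almost perfectness of the layer (established as in \textbf{(b)} at the previous level) feeds into the stability of ``almost of finite presentation'' under square-zero extension by an almost perfect module, closing the induction.
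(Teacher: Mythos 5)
Your proposal is correct, and its core is the same as the paper's proof: both reduce to the affine case $\sfX=\Spec A$ with $A$ noetherian, and both conclude by the noetherian finiteness criteria of \S 7.2.4 of \cite{lurie2017ha} --- the module criterion \cite[7.2.4.17]{lurie2017ha} for the almost perfectness of $\pi_{n+1}$, and the characterization \cite[7.2.4.31]{lurie2017ha} of algebras almost of finite presentation over a noetherian base ($B$ is almost of finite presentation over noetherian $A$ if and only if $B$ is noetherian and $\pi_0B$ is finitely generated over $\pi_0A$), which is the precise citation your step \textbf{(a)} is looking for, so the fallback induction via square-zero extensions is not needed. The one genuine difference is where these criteria are applied: the paper stays in the simplicial world and invokes \emph{derived analogues} of the two results without proof, whereas you first pass across $\Theta$ --- using \ref{preservation of properties}(9) for finite presentation, \ref{comparison for QCoh} and \ref{almost perfect objects in the derived setting} for almost perfectness, and the (correct, and correctly justified) observation that $\Theta$ commutes with truncation --- and then apply Lurie's results verbatim in the spectral setting. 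Your route is marginally more self-contained, since it leans only on comparison lemmas already established in the paper together with the actual statements in \cite{lurie2017ha}, at the cost of some transfer bookkeeping; the paper's route is shorter but asks the reader to accept that the proofs of \cite[7.2.4.17]{lurie2017ha} and \cite[7.2.4.31]{lurie2017ha} adapt to simplicial commutative rings.
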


\begin{proof}
	The assertion is local on $\sfX$; we may therefore suppose that $\sfX=\Spec A$ is affine. Our assumption on $\sfX$ guarantees that $A$ is a noetherian simplicial commutative ring. Since $\tau_{\leq n}A$ is also noetherian and $\pi_{n+1}A$ is a finitely generated module over $\pi_0A$, the desired result follows from derived analogues of \cite[7.2.4.31]{lurie2017ha} and \cite[7.2.4.17]{lurie2017ha}. 
\end{proof}

\begin{lemma}\label{completions and truncations}
	Let $R$ be an adic simplicial commutative ring with a finitely generated ideal of definition $I \subseteq \pi_0R$. Let $\sfX$ be a locally noetherian derived Deligne-Mumford stack over $R$. Then the canonical map $\tau_{\leq n} (\sfX^\wedge_I) \rightarrow (\tau_{\leq n} \sfX)^\wedge_I$ is an equivalence in the $\infty$-category $\iTop^{\sHen}_{\CAlg}$. 
\end{lemma}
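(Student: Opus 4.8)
The plan is to reduce the assertion to a statement about completions of Noetherian simplicial commutative rings, and to settle the latter by a direct computation of homotopy groups. First I would observe that the claim is local on $\sfX$: the two sides $\tau_{\leq n}(\sfX^\wedge_I)$ and $(\tau_{\leq n}\sfX)^\wedge_I$ have the same underlying $\infty$-topos, since passing to $\tau_{\leq n}$ leaves the underlying topos unchanged and the topos of a formal completion depends only on the underlying topos of $\sfX$ together with $I$ (which is unaffected by truncation, as $\pi_0 \calO_{\calX}$ is; here one uses the equivalence of \'etale sites of \ref{the equivalence of etale sites}). Consequently the comparison map is an equivalence if and only if it induces an equivalence of structure sheaves, which may be checked after evaluation on each \'etale affine $\Spec B \rightarrow \sfX$. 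Since $\sfX$ is locally Noetherian (see \ref{locally noetherian derived Deligne-Mumford stacks}), such a $B$ is a Noetherian simplicial commutative ring, and the problem becomes the ring-theoretic assertion that the canonical map $(\tau_{\leq n}B)^\wedge_I \rightarrow \tau_{\leq n}(B^\wedge_I)$ is an equivalence.

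To prove this ring-theoretic statement, I would compute homotopy groups on both sides. The essential input is that, for a Noetherian $B$ with finitely generated $I \subseteq \pi_0 B$, the derived $I$-completion is computed on homotopy by classical $I$-adic completion, $\pi_m(B^\wedge_I) \cong \widehat{\pi_m B}$, the relevant $\lim^1$-terms vanishing by the Mittag--Leffler condition supplied by the Artin--Rees lemma. Granting this, $\tau_{\leq n}(B^\wedge_I)$ has homotopy groups $\widehat{\pi_m B}$ for $m \leq n$ and $0$ for $m > n$; moreover it is $I$-complete, since each $\widehat{\pi_m B}$ is finitely generated over the complete Noetherian ring $\widehat{\pi_0 B}$ and hence $I$-complete, and a bounded object all of whose homotopy groups are $I$-complete is itself $I$-complete (by closure of the complete objects under limits and extensions, applied to the finite Postnikov tower). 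On the other hand $\tau_{\leq n}B$ is again Noetherian and $n$-truncated, so $(\tau_{\leq n}B)^\wedge_I$ has the same homotopy groups $\widehat{\pi_m B}$ for $m \leq n$ and vanishes above degree $n$; in particular it is $n$-truncated. Tracing the construction of the comparison map, which factors the completion $\tau_{\leq n}B \rightarrow \tau_{\leq n}(B^\wedge_I)$ through $(\tau_{\leq n}B)^\wedge_I$ using the $I$-completeness just established, one checks that it induces the canonical isomorphism $\widehat{\pi_m B} \rightarrow \widehat{\pi_m B}$ in each degree, whence it is an equivalence.

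The main obstacle is precisely the pair of facts used in the second paragraph: that derived completion agrees with classical completion on homotopy groups, and that the truncation of a complete object remains complete. Both fail in general without a Noetherianness hypothesis, which is exactly why the statement is restricted to locally Noetherian $\sfX$ (compare \ref{truncation and almost of finite presentation}). An alternative organization that streamlines the bookkeeping is to apply the conservative functor $\Xi$ of \ref{conservative at the level of ringed infinity-topoi}: since $\Xi$ commutes with $\tau_{\leq n}$ (because $\Theta$ preserves homotopy groups, see \ref{the forgetful functor is monadic and comonadic}) and with formal completion (see \ref{comparison for completions}), the assertion is equivalent to the corresponding statement for the locally Noetherian spectral Deligne--Mumford stack $\Xi(\sfX)$, reducing the problem to the spectral setting where the Noetherian completion facts above are available from \cite{lurie2018sag}.
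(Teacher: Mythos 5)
Your proposal is correct, and its opening reduction (same underlying $\infty$-topos on both sides, check the structure sheaves on \'etale affines, hence reduce to the ring statement that $(\tau_{\leq n}B)^\wedge_I \rightarrow \tau_{\leq n}(B^\wedge_I)$ is an equivalence for noetherian $B$) coincides with the paper's. Where you diverge is in how that ring statement is settled. The paper never computes homotopy groups of completions: it observes that the $I$-completion functor on modules is right t-exact (\cite[7.3.4.4]{lurie2018sag}, via \ref{completion is right t-exact}), so the whole assertion reduces to the single claim that $(\tau_{\leq n}B)^\wedge_I$ is $n$-truncated; this is then deduced by writing $B$ as a filtered colimit of perfect modules, invoking that truncations of almost perfect modules over a noetherian ring remain almost perfect (\cite[8.4.4.1]{lurie2018sag}), that completion of an almost perfect module is base change along the flat map $A \rightarrow A^\wedge_I$ (\cite[7.3.5.7]{lurie2018sag} and \cite[7.3.6.9]{lurie2018sag}). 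You instead compute both sides degreewise as classical $I$-adic completions (Artin--Rees/Mittag--Leffler), verify that $\tau_{\leq n}(B^\wedge_I)$ is $I$-complete so that the comparison map factors, and then identify the induced map on homotopy. Both arguments rest on the same noetherian inputs (flatness of completion, finite generation of homotopy groups), but the paper's t-exactness trick buys brevity: only one side needs to be controlled, and one never has to check completeness of $\tau_{\leq n}(B^\wedge_I)$ nor identify the map on homotopy groups --- the latter being the one place where your argument needs a little extra care (a map between two copies of $\widehat{\pi_m B}$ under $\pi_m B$ is the canonical isomorphism because classical completion is a reflection onto complete finitely generated modules over a noetherian ring). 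Your computation buys explicitness, since it exhibits the homotopy groups of both sides, and your closing suggestion to transport the problem along the conservative functor $\Xi$ is close in spirit to what the paper actually does: after \ref{comparison of completions}, its argument takes place entirely in $\Mod_{A^\circ}$, i.e., in the spectral setting.
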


\begin{proof}
	Working locally on $\sfX$, we can reduce to the case where $\sfX=\Spec A$ is affine. We wish to show that $\calO_{\Spf \tau_{\leq n}A}$ can be identified with $\tau_{\leq n}\calO_{\Spf A}$ under the equivalence of $\infty$-topoi $\Shv^{\Delta, \ad}_{\tau_{\leq 0}A} \simeq \Shv^{\Delta, \ad}_A$; see \ref{structure sheaf for formal spectrum}. Unwinding the definitions, we are reduced to proving that for each simplicial commutative ring $B$ which is \'etale over $A$, the canonical map $(\tau_{\leq n}B)^\wedge_{I(\pi_0A)} \rightarrow \tau_{\leq n}(B^\wedge_{I(\pi_0A)})$ is an equivalence of simplicial commutative rings. Since the $I(\pi_0A)$-completion functor $\Mod_{A^\circ} \rightarrow \Mod_{A^\circ}^{\Cpl(I(\pi_0A))}$ of \ref{completion is right t-exact} is right t-exact by virtue of \cite[7.3.4.4]{lurie2018sag}, it will suffice to show that $(\tau_{\leq n}B)^\wedge_{I(\pi_0A)}$ is $n$-truncated. Writing $B$ as a filtered colimit of perfect $A^\circ$-modules $B_\alpha$ (see \cite[7.2.4.2]{lurie2017ha}), we are reduced to proving that each $(\tau_{\leq n}B_\alpha)^\wedge_{I(\pi_0A)}$ is $n$-truncated. Since $A$ is noetherian, \cite[8.4.4.1]{lurie2018sag} guarantees that each $\tau_{\leq n}B_\alpha$ is almost perfect as an $A^\circ$-module, so that $(\tau_{\leq n}B_\alpha)^\wedge_{I(\pi_0A)}$ can be identified with the tensor product $\tau_{\leq n}B_\alpha \otimes_A A^\wedge_{I(\pi_0A)}$ by virtue of \cite[7.3.5.7]{lurie2018sag}. The desired result now follows from the fact that the canonical map $A \rightarrow A^\wedge_{I(\pi_0A)}$ is flat (see \cite[7.3.6.9]{lurie2018sag} and \ref{comparison for completions}). 
\end{proof}

\begin{pg}\label{a relative version of the formal GAGA}
	Let us recall the formal GAGA in the context of underlying formal spectral Deligne-Mumford stacks of formal derived Deligne-Mumford stacks (see \ref{underlying formal spectral Deligne-Mumford stacks}). To this end, let $R$ be a complete adic simplicial commutative ring with a finitely generated ideal of definition $I \subseteq \pi_0R$. Let $\sfX$ be a derived algebraic space which is proper and locally almost of finite presentation over $R$ and let $\sfX^\wedge_I$ denote the formal completion of $\sfX$ along the vanishing locus of $I$ (see \ref{formal completion in DAG}). Let $\sfY$ be a quasi-separated derived algebraic space. Note that $\Xi_\mathbb{Z}(\sfX)$ is a spectral algebraic space which is proper and locally almost of finite presentation over $R^\circ$ and $\Xi_\mathbb{Z}(\sfY)$ is a quasi-separated spectral algebraic space over $\mathbb{Z}$; see \ref{preservation of properties} and \cite[p.1683]{lurie2018sag}. Moreover, we can regard $R^\circ$ as a complete adic $\bbE_\infty$-ring with the ideal of definition $I$ (regarded as an ideal of $\pi_0R^\circ$) by virtue of \ref{complete derived rings are compatible}. According to \cite[8.5.3.1]{lurie2018sag}, 
the restriction map 
$$
\Map_{\SpDM_\mathbb{Z}}(\Xi_\mathbb{Z}(\sfX), \Xi_\mathbb{Z}(\sfY)) \rightarrow \Map_{\fSpDM_\mathbb{Z}}(\Xi_\mathbb{Z}(\sfX)^\wedge_I, \Xi_\mathbb{Z}(\sfY))
$$
is a homotopy equivalence. Note that $\Xi_\mathbb{Z}(\sfX)^\wedge_I$ can be identified with $\Xi_\mathbb{Z}(\sfX^\wedge_I)$ by virtue of \ref{comparison for completions}. 
\end{pg}

\begin{pg}
	 We are now ready to give a proof of \ref{formal GAGA in DAG}:
\end{pg}

\begin{proof}[Proof of \emph{\ref{formal GAGA in DAG}}]
	We proceed as in the proof of \cite[5.1.13]{MR4560539}. We note that the canonical map $\mathop{\colim}\limits_{n \geq 0} \tau_{\leq n}\sfX \rightarrow \sfX$ is an equivalence in $\DerDM$. It follows from \ref{completions and colimits} that the induced map $\mathop{\colim}\limits_{n \geq 0}(\tau_{\leq n}\sfX)^\wedge_I \rightarrow \sfX^\wedge_I$ is an equivalence in the $\infty$-category $\iTop^{\sHen}_{\CAlg}$. We can therefore reduce to the case where $\sfX$ is $n$-truncated for some integer $n \geq 0$. We now proceed by induction on $n$. Suppose first that $n >0$. Let $\sfX=(\calX, \calO_{\calX})$, and let $(\tau_{\leq n-1}\sfX)^{\pi_n\calO_{\calX}[n+1]}$ denote the derived Deligne-Mumford stack $(\calX, \tau_{\leq n-1}\calO_{\calX} \oplus \pi_n\calO_{\calX}[n+1])$, where $\tau_{\leq n-1}\calO_{\calX} \oplus \pi_n\calO_{\calX}[n+1]$ is the trivial square-zero extension of $\tau_{\leq n-1}\calO_{\sfX}$ by $\pi_n\calO_{\calX}[n+1]$; see \cite[17.1.1]{lurie2018sag}. It follows from \cite[7.4.1.26]{lurie2017ha} that $\tau_{\leq n}\calO_{\calX}$ is a square-zero extension of $\tau_{\leq n-1}\calO_{\calX}$ by $\pi_n\calO_{\calX}[n]$, so that we have a pushout diagram of derived Deligne-Mumford stacks 
$$
\xymatrix{
(\tau_{\leq n-1}\sfX)^{\pi_n\calO_{\calX}[n+1]} \ar[r] \ar[d] & \tau_{\leq n-1}\sfX \ar[d] \\
\tau_{\leq n-1}\sfX \ar[r] & \sfX.
}
$$
We deduce that the restriction map $\theta_{\sfX}$ can be identified with the fiber product of $\theta_{\tau_{\leq n-1}\sfX}$ with itself over $\theta_{(\tau_{\leq n-1}\sfX)^{\pi_n\calO_{\calX}[n+1]}}$ in the $\infty$-category $\Fun(\Delta^1, \SSet)$. Our inductive hypothesis guarantees that the map $\theta_{\tau_{\leq n-1}\sfX}$ is an equivalence. Consequently, to show that $\theta_{\sfX}$ is an equivalence, it will suffice to show that $\theta_{(\tau_{\leq n-1}\sfX)^{\pi_n\calO_{\calX}[n+1]}}$ is an equivalence. We have a commutative diagram
$$
\xymatrix{
\Map_{\DerDM}((\tau_{\leq n-1}\sfX)^{\pi_n\calO_{\calX}[n+1]}, \sfY) \ar[r] \ar[d] & \Map_{\fDerDM}(((\tau_{\leq n-1}\sfX)^{\pi_n\calO_{\calX}[n+1]})^\wedge_I, \sfY) \ar[d] \\
\Map_{\DerDM}(\tau_{\leq n-1}\sfX, \sfY) \ar[r] & \Map_{\fDerDM}((\tau_{\leq n-1} \sfX)^\wedge_I, \sfY),
}
$$
where the bottom horizontal map is an equivalence by the inductive hypothesis. It will therefore suffice to show that this diagram induces a homotopy equivalence between the homotopy fibers of the vertical maps over any point $f: \tau_{\leq n-1}\sfX \rightarrow \sfY$. Let $i:(\tau_{\leq n-1}\sfX)^\wedge_I \rightarrow \tau_{\leq n-1}\sfX$ be a map which exhibits $(\tau_{\leq n-1}\sfX)^\wedge_I$ as the formal completion of $\tau_{\leq n-1}\sfX$ along the vanishing locus of $I$. Unwinding the definition of trivial square-zero extensions, we are reduced to proving that the canonical map
$$
\Map_{\QCoh(\tau_{\leq n-1}\sfX)}(f^\ast L^{\alg}_{\sfY}, \pi_n\calO_{\calX}[n+1]) \rightarrow \Map_{\QCoh((\tau_{\leq n-1}\sfX)^\wedge_I)}(\widehat{f}^\ast L^{\alg}_{\sfY}, i^\ast\pi_n\calO_{\calX}[n+1])
$$
is an equivalence, where $L^{\alg}_{\sfY}$ is the algebraic cotangent complex of $\sfY$ of \cite[25.3.1.6]{lurie2018sag} and $\widehat{f}$ denotes the composition of $f$ with the map $i$; see \ref{quasi-coherent sheaves in the derived setting} and \ref{quasi-coherent sheaves in the formal setting}. Using a derived analogue of \cite[4.2.0.4]{lurie2018sag}, we see that $\sfX$ is locally noetherian, so that $\tau_{\leq n-1}\sfX$ is proper and locally almost of finite presentation over $R$ by virtue of \ref{truncation and almost of finite presentation}. Combining this observation with \ref{a relative version of the formal GAGA} and \cite[8.5.0.3]{lurie2018sag}, we obtain an equivalence of $\infty$-categories $i^\ast: \APerf((\tau_{\leq n}\sfX)^\wedge_I) \rightarrow \APerf(\tau_{\leq n}\sfX)$; see \ref{almost perfect objects in the derived setting} and \ref{quasi-coherent sheaves in the formal setting}. It follows from \ref{truncation and almost of finite presentation} that $\pi_n\calO_{\calX}[n+1] \in \QCoh(\tau_{\leq n-1}\sfX)$ is almost perfect, so that the desired result now follows from \cite[8.5.1.2]{lurie2018sag} (see also \ref{a relative version of the formal GAGA}). 

We now treat the case $n=0$. In this case, \ref{completions and truncations} supplies a canonical equivalence $\tau_{\leq 0} (\sfX^\wedge_I) \simeq (\tau_{\leq 0}\sfX)^\wedge_I$, so that $\tau_{\leq 0} (\sfX^\wedge_I) \simeq \sfX^\wedge_I$: that is, $\sfX^\wedge_I$ is $0$-truncated. It follows that the vertical maps in the diagram
$$
\xymatrix{
\Map_{\DerDM}(\sfX, \sfY) \ar[r] \ar[d] & \Map_{\fDerDM}(\sfX^\wedge_I, \sfY) \ar[d] \\ 
\Map_{\DerDM}(\sfX, \tau_{\leq 0}\sfY) \ar[r] & \Map_{\fDerDM}(\sfX^\wedge_I, \tau_{\leq 0}\sfY) 
}
$$
are equivalences by virtue of \ref{truncation as right adjoint}. We note that the functor $\Xi_\mathbb{Z}: \iTop_{\CAlg^\Delta}^{\sHen} \rightarrow \iTop_{\CAlg^{\cn}_\mathbb{Z}}^{\sHen}$ of \ref{restriction to the subcategory of strictly Henselian objects and local morphisms} induces an equivalence from the $\infty$-category of $0$-truncated (formal) derived Deligne-Mumford stacks to the $\infty$-category of $0$-truncated (formal) spectral Deligne-Mumford stacks over $\mathbb{Z}$; see \ref{equivalence between 0-truncated DerDM and SpDM} and \ref{equivalence between 0-truncated fDerDM and fSpDM}. It then follows from \ref{a relative version of the formal GAGA} that the bottom horizontal map is an equivalence, so that the map $\theta_{\sfX}$ is an equivalence as desired. 
\end{proof}

\bibliography{chough_fdag}

\providecommand{\bysame}{\leavevmode\hbox to3em{\hrulefill}\thinspace}
\providecommand{\MR}{\relax\ifhmode\unskip\space\fi MR }
\providecommand{\MRhref}[2]{%
  \href{http://www.ams.org/mathscinet-getitem?mr=#1}{#2}
}
\providecommand{\href}[2]{#2}
\begin{thebibliography}{1}

\bibitem{MR4598184}
Chang-Yeon Chough, \emph{Brauer spaces of spectral algebraic stacks}, J. Lond.
  Math. Soc. (2) \textbf{107} (2023), no.~6, 2218--2251. \MR{4598184}

\bibitem{MR4560539}
Daniel Halpern-Leistner and Anatoly Preygel, \emph{Mapping stacks and
  categorical notions of properness}, Compos. Math. \textbf{159} (2023), no.~3,
  530--589. \MR{4560539}

\bibitem{MR2717174}
Jacob Lurie, \emph{Derived algebraic geometry}, ProQuest LLC, Ann Arbor, MI,
  2004, Thesis (Ph.D.)--Massachusetts Institute of Technology. \MR{2717174}

\bibitem{MR2522659}
\bysame, \emph{Higher topos theory}, Annals of Mathematics Studies, vol. 170,
  Princeton University Press, Princeton, NJ, 2009. \MR{2522659}

\bibitem{lurie2018sag}
\bysame, \emph{Spectral algebraic geometry}, Last update: Feb 2018, Preprint.

\bibitem{lurie2017ha}
\bysame, \emph{Higher algebra}, Last update: Sep 2017, Preprint.

\end{thebibliography}
\bibliographystyle{amsplain}

\end{document}